\pgfplotsset{compat=1.17}
\newtheorem{theorem}{Theorem}[section]
\newtheorem{proposition}[theorem]{Proposition}
\newtheorem{corollary}[theorem]{Corollary}
\newtheorem{lemma}[theorem]{Lemma}
\newtheorem{definition}[theorem]{Definition}
\theoremstyle{remark} 
\newtheorem{remark}[theorem]{Remark}
\newtheorem{example}[]{Example}
\numberwithin{equation}{section}
\newcommand{\R}{\mathbb{R}}
\newcommand{\ZZo}{\mathscr{Z}_0}
\newcommand{\KK}{\mathscr{K}}
\newcommand{\dd}{\mathrm{d}}
\newcommand{\EE}{\mathbb{E}}
\newcommand{\set}[2]{\left\{ #1 \ | \ #2\right\}}
\newcommand{\puff}[2]{\text{puff}_{#1} \left(#2\right)}
\newcommand{\Sp}[1]{\Sigma_\pi #1}
\newcommand{\MSp}{\mathrm{M}\Sigma_\pi}
\newcommand\restr[2]{{
  \left.\kern-\nulldelimiterspace 
  #1 
  \vphantom{\big|} 
  \right|_{#2} 
  }}
\DeclareMathOperator{\Vol}{vol}
\DeclareMathOperator{\conv}{conv}
\newcommand{\seg}{\underline}
\title{Fiber convex bodies}
\author{Léo Mathis \and Chiara Meroni}
\date{}
\begin{document}

\maketitle
\begin{abstract}
    In this paper we study the fiber bodies, that is the extension of the notion of fiber polytopes for more general convex bodies. After giving an overview of the properties of the fiber bodies, we focus on three particular classes of convex bodies. First we describe the strict convexity of the fiber bodies of the so called puffed polytopes. Then we provide an explicit equation for the support function of the fiber bodies of some smooth convex bodies. Finally we give a formula that allows to compute the fiber bodies of a zonoid with a particular focus on certain zonoids called discotopes. Throughout the paper we illustrate our results with detailed examples.
\end{abstract}
\section{Introduction}
If $K$ is a convex body in $\R^{n+m}$ and $\pi:\R^{n+m}\to V$ is the orthogonal projection onto a subspace $V\subset\R^{n+m}$ of dimension $n$, the \emph{fiber body} of $K$ with respect to $\pi$ is the \emph{average} of the fibers of $K$ under this projection:
\begin{equation}\label{eq:intro}
	\Sigma_\pi K= \int_{\pi(K)} \left(K\cap\pi^{-1}(x)\right) \dd x.
\end{equation}
This expression will be made rigorous in Proposition~\ref{prop:supportaverage}.

Such a notion was introduced for polytopes by  Billera and Sturmfels in \cite{fiberpolytopes}. It has been investigated in many different contexts, from combinatorics such as in \cite{athanasiadis2000fiber} to algebraic geometry and even tropical geometry in the context of polynomial systems \cite{esterovkhovanskii, esterovmixedfiber, sturmfels2008tropical}. Notably, recent studies concern the particular case of monotone path polytopes \cite{black2021monotone}.

This paper is dedicated to the study of the fiber body of convex bodies that are not polytopes. This construction was introduced and studied by Esterov in~\cite{esterovmixedfiber}. In Section~\ref{sec:Generalities} the general properties of fiber bodies are stated. In particular, we show in Example~\ref{ex:counterex_continuity} that a point of the boundary of the fiber body may not have a continuous representative. In the rest of the paper, each section regards the fiber body of a particular class of convex bodies.

Section~\ref{sec:Puffed} applies directly the description of the faces to certain convex bodies that we call \emph{puffed polytopes}. They are convex bodies that are obtained from polytopes by taking the ``derivative'' of their algebraic boundary (see Definition~\ref{def:puffed}). Propositions~\ref{prop:puff1}, \ref{prop:puff2} and \ref{prop:puffn} describe the strict convexity of the fiber body of a puffed polytope. As a concrete example we study the case of the elliptope with a particular projection.

In Section~\ref{sec:Smooth} we investigate the class of curved convex bodies. Namely, we consider convex bodies whose boundary are $C^2$ hypersurface with no ``flat'' directions, i.e. with a strictly positive curvature. In that case Theorem~\ref{thm:supofsmooth} gives an explicit formula for the support function of $\Sigma_\pi K$, directly in terms of the support function of $K$. This is an improvement of equation~\eqref{eq:supportaverage} which involves the support function of the fibers. We immediately give an example in which the support function of the fiber body is easily computed using Theorem~\ref{thm:supofsmooth}.

The last section is dedicated to the case of zonoids. Zonoids arise as limits of finite Minkowski sums of segments. We prove that the fiber body of a zonoid is a zonoid, and give an explicit formula to compute it in Theorem~\ref{thm:Fiberofzonoids}. We then focus on a particular class of zonoids that are finite Minkowski sums of discs in $3$--space, called \emph{discotopes}. After giving a general description of discotopes as algebraic bodies, we illustrate our formula for zonoids by computing the fiber body of a specific discotope.

\subsection*{Acknowledgments}

The authors wish to thank Antonio Lerario and Bernd Sturmfels without whom this project would not have existed, and Rainer Sinn for his helpful comments. We want to thank also Fulvio Gesmundo for interesting discussions and Anna-Laura Sattelberger for sharing her knowledge on holonomicity. We are also grateful to Michele Stecconi whose comments helped to considerably simplify Section~\ref{sec:Generalities}.

\subsection*{Data availability} Data sharing not applicable to this article as no datasets were generated or analysed during the current study.

\section{Generalities}\label{sec:Generalities}

\subsection{Main definitions}

Consider the Euclidean vector space $\R^{n+m}$ endowed with the standard Euclidean structure and let $V\subset \R^{n+m}$ be a subspace of dimensions $n$. Denote by $W$ its orthogonal complement, such that $\R^{n+m}=V\oplus W$.
Let $\pi : \R^{n+m}\to V$ be the orthogonal projection onto $V$.
Throughout this article we will canonically identify the Euclidean space with its dual. However the notation is meant to be consistent: $x,y,z$ will denote vectors, whereas we will use $u,v,w$ for dual vectors.

We call \emph{convex bodies} the non--empty compact convex subsets of a vector space. The space of convex bodies in a vector space $E$ is denoted by $\KK(E)$. If $K,L\in\KK(E)$ their \emph{Minkowski sum} is the convex body $K+L\in\KK(E)$ given by
\begin{equation}
    K+L:=\set{x_1+x_2}{x_1\in K,\ x_2\in L}.
\end{equation}
Moreover if $\lambda\in \R$, we write $\lambda K:=\set{\lambda x}{x\in K}.$

The \emph{support function} of a convex body $K\in\KK(\R^{n+m})$ is the function $h_K:\R^{n+m}\to \R$ given for all $u\in \R^{n+m}$ by 
\begin{equation}\label{eq:defsupp}
    h_K(u):=\max\set{\langle u, x\rangle}{x\in K},
\end{equation}
where $\langle \cdot , \cdot \rangle$ is the standard Euclidean scalar product.
This map becomes handy when manipulating convex bodies as it satisfies some useful properties (see~\cite[Section~$1.7.1$]{bible} for proofs and more details).
\begin{proposition}\label{prop:propertieshK}Let $K,L\in\KK(\R^{n+m})$ with their respective support functions $h_K,h_L$. Then
\begin{enumerate}[label=(\roman*)]
    \item $h_K=h_L$ if and only if $K=L$;
    \item If $T:\R^{n+m}\to \R^k$ is a linear map then $h_{TK}=h_K\circ  T^{t}$;
    \item $h_K$ is differentiable at $u\in\R^{n+m}$ if and only if the point $x$ realizing the maximum in~\eqref{eq:defsupp} is unique. In that case $x=\nabla h (u)$ where $\nabla h$ denotes the gradient of $h.$
\end{enumerate}
\end{proposition}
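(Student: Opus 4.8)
The plan is to dispatch the three items separately: (i) follows from the separation theorem, (ii) is a one-line computation, and (iii) rests on identifying the set of maximizers in \eqref{eq:defsupp} with the subdifferential of the convex function $h_K$. For (i), the forward implication is immediate from the definition. For the converse I would use that a convex body is the intersection of the closed half-spaces containing it: explicitly $K=\bigcap_{u\in\R^{n+m}}\{x : \langle u,x\rangle\le h_K(u)\}$, because any $y\notin K$ can be strictly separated from the compact convex set $K$, producing $u$ with $\langle u,y\rangle>h_K(u)$. Hence $K$ is reconstructible from $h_K$, and $h_K=h_L$ forces $K=L$.

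For (ii) I would simply unwind the definitions: for $u\in\R^k$,
\[
h_{TK}(u)=\max_{y\in TK}\langle u,y\rangle=\max_{x\in K}\langle u,Tx\rangle=\max_{x\in K}\langle T^{t}u,x\rangle=h_K(T^{t}u).
\]
For (iii) the heart of the matter is to show that the set of maximizers $F(u):=\{x\in K : \langle u,x\rangle=h_K(u)\}$ coincides with the subdifferential $\partial h_K(u)$. If $x\in F(u)$ then for every $v$ one has $h_K(v)\ge\langle v,x\rangle=h_K(u)+\langle v-u,x\rangle$, so $x\in\partial h_K(u)$. Conversely, if $x\in\partial h_K(u)$, evaluating the subgradient inequality at $v=0$ and $v=2u$ and using $h_K(0)=0$, $h_K(2u)=2h_K(u)$ gives $\langle u,x\rangle=h_K(u)$; the inequality then reads $\langle v,x\rangle\le h_K(v)$ for all $v$, so $x\in K$ by the half-space characterization from part (i), and therefore $x\in F(u)$.

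Granting this identification, I would finish by invoking the standard fact that a finite convex function on $\R^{n+m}$ is differentiable at a point exactly when its subdifferential there is a singleton, the gradient being the unique subgradient; this yields precisely (iii). The main obstacle is this last convex-analysis input: if one wants a self-contained argument one must instead compute the one-sided directional derivative $h_K'(u;v)=\max\{\langle v,x\rangle : x\in F(u)\}$ via a compactness (Danskin-type) argument, observe that this expression is linear in $v$ iff the support function of the convex body $F(u)$ is linear, and conclude by part (i) that this happens iff $F(u)$ is a single point. Items (i) and (ii) are routine.
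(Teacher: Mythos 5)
Your proposal is correct. Note that the paper does not prove this proposition at all: it is quoted as background, with proofs deferred to Schneider's monograph (the reference cited as \cite{bible}, Section~1.7.1), so there is no in-paper argument to compare against. Your write-up supplies the standard proofs: strict separation and the half-space representation $K=\bigcap_u\{x:\langle u,x\rangle\le h_K(u)\}$ for (i), the one-line adjoint computation for (ii), and for (iii) the identification of the maximizer set $F(u)$ with the subdifferential $\partial h_K(u)$ (your two-sided check via $v=0$ and $v=2u$, using positive homogeneity, is exactly right, and membership $x\in K$ does follow from (i)'s half-space characterization), combined with the fact that a finite convex function on $\R^{n+m}$ is differentiable at a point precisely when its subdifferential there is a singleton. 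That last ingredient is an external convex-analysis theorem (Rockafellar); you correctly flag it and offer the self-contained alternative via the directional derivative $h_K'(u;v)=h_{F(u)}(v)$ and linearity of a support function iff the body is a point, which closes the only potential gap. This is essentially the treatment in the cited reference, so nothing is missing.
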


If $K\in \KK(\R^{n+m})$ we write $K_x$ for the orthogonal projection onto $W$ of the fiber of $\pi|_K$ over $x$, namely
\begin{equation}
    K_x := \set{y \in W}{(x,y)\in K}.
\end{equation}

\begin{definition}
    A map $\gamma : \pi(K) \to W$ such that for all $x\in \pi(K)$, $\gamma (x) \in K_x$ is called a \emph{section of $\pi$}. When there is no ambiguity on the map $\pi$ we will simply say that $\gamma$ is a \emph{section}.
\end{definition}

Using this notion we are now able to define our main object of study. In this paper \emph{measurable} is always intended with respect to the Borelians.

\begin{definition}\label{def:fiberbody}
    The \emph{fiber body} of $K$ with respect to the projection $\pi$ is the convex body
    \begin{equation}
        \Sp{K}:=\set{\int_{\pi(K)}\gamma(x) \dd x}{\gamma :\pi(K) \to W \hbox{ measurable section}} \in \KK(W).
    \end{equation}
    Here $\dd x$ denotes the integration with respect to the $n$--dimensional Lebesgue measure on $V$.  We say that a section $\gamma$ \emph{represents} $y\in \Sp{K}$ if $y=\int_{\pi(K)}\gamma(x) \dd x$. 
\end{definition}

\begin{remark}
    Note that, with this setting, if $\pi(K)$ is of dimension $<n$, then its fiber body is $\Sp{K}=\{0\}$. 
\end{remark}

This definition of fiber bodies, that can be found for example in \cite{esterovmixedfiber} under the name \emph{Minkowski integral}, extends the classic construction of fiber polytopes \cite{fiberpolytopes}, up to a constant. 
Here, we choose to omit the normalization $\frac{1}{\Vol(\pi(K))}$ in front of the integral used by Billera and Sturmfels in order to make apparent the \emph{degree} of the map $\Sp$ seen in \eqref{eq:degreeofSp}. This degree becomes clear with the notion of \emph{mixed fiber body}, see~\cite[Theorem~$1.2$]{esterovmixedfiber}.

\begin{proposition}\label{prop:n+1hom}
    For any $\lambda\in\R$ we have $\Sp{(\lambda K)} =\lambda |\lambda|^{n} \Sp{K}$. In particular if $\lambda\geq 0$
    \begin{equation}\label{eq:degreeofSp}
        \Sp{(\lambda K)} =\lambda^{n+1} \Sp{K}.
    \end{equation}
\end{proposition}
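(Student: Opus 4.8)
The plan is to reduce the statement to a single linear change of variables in the integral of Definition~\ref{def:fiberbody}, after matching up the measurable sections of $\pi|_{\lambda K}$ with those of $\pi|_K$. Fix $\lambda\neq 0$ first. Since $\pi$ is linear we have $\pi(\lambda K)=\lambda\,\pi(K)$, and the fibers rescale accordingly: for every $x\in\pi(\lambda K)$ one has $(\lambda K)_x=\lambda\,K_{x/\lambda}$, because $(x,y)\in\lambda K$ is equivalent to $(x/\lambda,y/\lambda)\in K$, i.e. to $y/\lambda\in K_{x/\lambda}$. (The sign of $\lambda$ plays no role here.)

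Next I would transport sections across this rescaling. Given a measurable section $\gamma:\pi(K)\to W$ of $\pi|_K$, define $\gamma_\lambda(x):=\lambda\,\gamma(x/\lambda)$ for $x\in\pi(\lambda K)=\lambda\,\pi(K)$. The map $x\mapsto x/\lambda$ is an affine automorphism of $V$ carrying $\pi(\lambda K)$ onto $\pi(K)$, so $\gamma_\lambda$ is measurable; moreover $\gamma_\lambda(x)\in\lambda\,K_{x/\lambda}=(\lambda K)_x$ by the fiber rescaling, so $\gamma_\lambda$ is a section of $\pi|_{\lambda K}$. Running the same construction with $\lambda^{-1}$ in place of $\lambda$ yields the inverse assignment, hence $\gamma\mapsto\gamma_\lambda$ is a bijection from the measurable sections of $\pi|_K$ onto those of $\pi|_{\lambda K}$.

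It then remains to compute, for a measurable section $\gamma$ of $\pi|_K$,
\begin{equation}
\int_{\pi(\lambda K)}\gamma_\lambda(x)\,\dd x=\int_{\lambda\,\pi(K)}\lambda\,\gamma(x/\lambda)\,\dd x=\lambda\,|\lambda|^{n}\int_{\pi(K)}\gamma(x')\,\dd x',
\end{equation}
where in the last equality we substituted $x=\lambda x'$: the scaling by $\lambda$ on $V\cong\R^{n}$ has Jacobian determinant of absolute value $|\lambda|^{n}$, while the extra factor $\lambda$ comes from pulling the constant out of the integrand. Taking the union over all measurable sections $\gamma$ and invoking the bijection above gives $\Sp{(\lambda K)}=\lambda\,|\lambda|^{n}\,\Sp{K}$; note that $\lambda\,|\lambda|^{n}$ has the sign of $\lambda$, so the right-hand side is a genuine convex body. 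For $\lambda=0$ we have $0\cdot K=\{0\}$, whose projection is a single point and hence of dimension $<n$, so both sides equal $\{0\}$ by the Remark after Definition~\ref{def:fiberbody}. The displayed ``in particular'' identity follows since $\lambda\,|\lambda|^{n}=\lambda^{n+1}$ for $\lambda\ge 0$. The only point requiring care is the bookkeeping of the sign of $\lambda$ — the absolute value $|\lambda|^{n}$ coming from the Jacobian versus the signed factor $\lambda$ coming from the homogeneity of $\gamma_\lambda$ — together with the (routine) check that the section correspondence preserves measurability; beyond that there is no real obstacle.
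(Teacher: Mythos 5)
Your proof is correct and follows essentially the same route as the paper: the section correspondence $\gamma_\lambda(x)=\lambda\gamma(x/\lambda)$ together with the change of variables giving the factor $\lambda|\lambda|^{n}$, with the reverse inclusion obtained from the analogous construction for $\lambda^{-1}$. Your extra checks (fiber rescaling, measurability, the $\lambda=0$ case via the remark after Definition~\ref{def:fiberbody}) only make explicit what the paper leaves implicit.
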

\begin{proof}
If $\lambda=0$ it is clear that the fiber body of $\{0\}$ is $\{0\}$.
Suppose now that $\lambda\neq 0$ and let $\gamma:\pi(K)\to W$ be a section. We can define another section $\tilde{\gamma}: \pi(\lambda K) \to W$ by $\tilde{\gamma}(x):=\lambda \gamma \left( \frac{x}{\lambda} \right)$. Using the change of variables $y=x/\lambda$, we get that
\begin{equation}
	\int_{\lambda \pi(K)}\tilde{\gamma}(x)\ \dd x = \lambda |\lambda|^{n} \int_{\pi(K)}\gamma(y)\  \dd y.
\end{equation}
This proves that $\Sp{\lambda K} \subseteq \lambda |\lambda|^{n} \Sp{K}$. Repeating the same argument for $\lambda^{-1}$ instead of $\lambda$, the other inclusion follows.
\end{proof}

\begin{corollary}
	If $K$ is centrally symmetric then so is $\Sp{K}$.
	\begin{proof}
	 Apply the previous proposition with $\lambda=-1$ to get $\Sigma_\pi\left((-1) K\right)=(-1)\Sigma_\pi K$. If $K$ is centrally symmetric with respect to the origin then $(-1)K=K$ and the result follows. The general case is obtained by a translation.
	\end{proof}
\end{corollary}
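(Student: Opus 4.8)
The plan is to reduce to the case where the center of symmetry of $K$ is the origin and then invoke Proposition~\ref{prop:n+1hom} with $\lambda=-1$. Recall that $K$ is centrally symmetric means there is a point $c\in\R^{n+m}$ with $2c-K=K$; writing $c=(a,b)$ with $a\in V$, $b\in W$, it is enough to understand how $\Sp{}$ behaves under the translation $K\mapsto K+(a,b)$.

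First I would record this translation formula. For $K':=K+(a,b)$ one has $\pi(K')=\pi(K)+a$ and $(K')_x=K_{x-a}+b$ for every $x\in\pi(K')$, so the assignment $\gamma\mapsto\gamma'$ with $\gamma'(x):=\gamma(x-a)+b$ is a bijection between the measurable sections of $\pi|_K$ and those of $\pi|_{K'}$. Since the $n$--dimensional Lebesgue measure on $V$ is translation invariant, the change of variables $y=x-a$ gives
\begin{equation}
	\int_{\pi(K')}\gamma'(x)\,\dd x=\int_{\pi(K)}\bigl(\gamma(y)+b\bigr)\,\dd y=\int_{\pi(K)}\gamma(y)\,\dd y+\Vol(\pi(K))\,b,
\end{equation}
and hence $\Sp{K'}=\Sp{K}+\Vol(\pi(K))\,b$. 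In other words, translating $K$ only translates $\Sp{K}$, so whether $\Sp{K}$ is centrally symmetric does not depend on the position of $K$, and we may assume the center of $K$ is the origin.

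With that reduction, $(-1)K=K$. Applying Proposition~\ref{prop:n+1hom} with $\lambda=-1$ yields $\Sp{((-1)K)}=(-1)\,|-1|^{n}\,\Sp{K}=-\Sp{K}$, so that $\Sp{K}=-\Sp{K}$, which is exactly the statement that $\Sp{K}$ is centrally symmetric with respect to the origin. Undoing the translation then gives the general case.

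There is no real obstacle here: the only point needing care is the translation formula, where one must check the claimed bijection of sections and use translation invariance of Lebesgue measure, together with the harmless remark that if $\dim\pi(K)<n$ then both $\Sp{K}$ and $\Sp{K'}$ equal $\{0\}$ and there is nothing to prove. Everything else is an immediate consequence of Proposition~\ref{prop:n+1hom}.
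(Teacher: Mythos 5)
Your proof is correct and follows the same route as the paper: apply Proposition~\ref{prop:n+1hom} with $\lambda=-1$ and reduce the general case to a centered one by translation. The only difference is that you spell out the translation behaviour $\Sp{\bigl(K+(a,b)\bigr)}=\Sp{K}+\Vol(\pi(K))\,b$, which the paper leaves implicit in the phrase ``obtained by a translation.''
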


As a consequence of the definition, it is possible to deduce a formula for the support function of the fiber body. This is the rigorous version of equation~\eqref{eq:intro}.

\begin{proposition} \label{prop:supportaverage} 
For any $u\in W$ we have
\begin{equation}\label{eq:supportaverage}
    h_{\Sp{K}}(u)=\int_{\pi(K)} h_{K_x}(u) \dd x.
\end{equation}
\end{proposition}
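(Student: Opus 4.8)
The plan is to unwind Definition~\ref{def:fiberbody} directly. Since $\Sp K=\set{\int_{\pi(K)}\gamma(x)\,\dd x}{\gamma\text{ measurable section}}$ and a fixed linear functional commutes with a vector-valued integral, I would first record that for every $u\in W$
\begin{equation*}
    h_{\Sp K}(u)=\sup_{\gamma}\left\langle u,\int_{\pi(K)}\gamma(x)\,\dd x\right\rangle=\sup_{\gamma}\int_{\pi(K)}\langle u,\gamma(x)\rangle\,\dd x,
\end{equation*}
the supremum ranging over all measurable sections $\gamma$ (each integral is well defined: $\gamma$ is measurable with values in a bounded subset of $W$, and $\pi(K)$ is compact, hence of finite volume). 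If $\dim\pi(K)<n$ then both sides of~\eqref{eq:supportaverage} are $0$, so I would assume $\Vol(\pi(K))>0$ from here on.

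The inequality ``$\le$'' is the easy direction: for any measurable section $\gamma$ and every $x\in\pi(K)$ one has $\gamma(x)\in K_x$, hence the pointwise bound $\langle u,\gamma(x)\rangle\le h_{K_x}(u)$; integrating over $\pi(K)$ and taking the supremum over $\gamma$ gives $h_{\Sp K}(u)\le\int_{\pi(K)}h_{K_x}(u)\,\dd x$. Before integrating I would check that $x\mapsto h_{K_x}(u)$ is measurable; this is elementary, since convexity of $K$ forces $K_x\supseteq\lambda K_{x_0}+(1-\lambda)K_{x_1}$ whenever $x=\lambda x_0+(1-\lambda)x_1$, so $x\mapsto h_{K_x}(u)$ is a bounded concave function on $\pi(K)$, in particular continuous on its interior and Lebesgue measurable.

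For the reverse inequality I would produce a measurable section $\gamma$ with $\langle u,\gamma(x)\rangle=h_{K_x}(u)$ for every $x\in\pi(K)$; then $h_{\Sp K}(u)\ge\langle u,\int_{\pi(K)}\gamma(x)\,\dd x\rangle=\int_{\pi(K)}\langle u,\gamma(x)\rangle\,\dd x=\int_{\pi(K)}h_{K_x}(u)\,\dd x$ and the proof is complete. Finding such a $\gamma$ is a measurable selection problem, and this is the step I expect to be the real work. The multifunction $x\mapsto K_x$ on $\pi(K)$ has nonempty compact values and closed graph — its graph is exactly $K$ under the identification $\R^{n+m}=V\oplus W$ — hence is a measurable multifunction; so is the ``$u$-face'' multifunction $x\mapsto\set{y\in K_x}{\langle u,y\rangle=h_{K_x}(u)}$, which is cut out of $K_x$ by the measurably varying supporting hyperplane $\{\langle u,\cdot\rangle=h_{K_x}(u)\}$. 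The Kuratowski--Ryll-Nardzewski selection theorem then supplies a measurable selector $\gamma$, which is the desired section. (Alternatively, using a Castaing representation $K_x=\overline{\{\gamma_i(x)\}_{i\in\N}}$ by measurable sections together with a partition of $\pi(K)$, one obtains sections with $\langle u,\gamma(x)\rangle\ge h_{K_x}(u)-\eps$ and lets $\eps\to0$, using $\Vol(\pi(K))<\infty$.)

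In short, the only ingredient beyond routine bookkeeping is the measurable selection: that $x\mapsto K_x$ (equivalently, its $u$-face) is a measurable multifunction — which rests on $K$ being closed — and the invocation of a selection theorem (or of a Castaing representation). The interchange of $u$ with the integral, the pointwise inequality, the measurability of $x\mapsto h_{K_x}(u)$, and the degenerate case $\dim\pi(K)<n$ are all immediate, and I would likely cite a standard reference for the measurable-multifunction facts rather than reprove them.
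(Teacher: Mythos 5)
Your proposal follows essentially the same route as the paper: the inequality $\le$ comes from the pointwise bound $\langle u,\gamma(x)\rangle\le h_{K_x}(u)$, and equality is achieved by a measurable section that maximizes $\langle u,\cdot\rangle$ on each fiber $K_x$, which is exactly the section $\gamma_u$ the paper invokes. The only difference is that you justify the existence of this maximizing section via a measurable selection theorem (and check measurability of $x\mapsto h_{K_x}(u)$), details the paper simply asserts; this is a correct and welcome elaboration, not a different argument.
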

\begin{proof}
By definition
\begin{equation}
    h_{\Sp{K}}(u)=\sup\set{\int_{\pi(K)} \langle u, \gamma(x) \rangle \ \dd x}{\gamma\hbox{ measurable section}} \leq \int_{\pi(K)} h_{K_x}(u) \dd x.
\end{equation}
To obtain the equality, it is enough to show that there exists a measurable section $\gamma_u:\pi(K)\to W$ with the following property: for all $x\in \pi(K)$ the point $\gamma_u(x)$ maximizes the linear form $\langle u, \cdot \rangle$ on $K_x$. In other words for all $x\in \pi(K)$, $\langle u, \gamma_u(x)\rangle=h_{K_x}(u)$. This is due to~\cite[Proposition~2.1]{AumannInt}.
\end{proof}

A similar result can be shown for the faces of the fiber body.

\begin{definition}
    Let $K\in \KK(\R^{n+m})$ and let $u\in \R^{n+m}$. We denote by $K^u$ the face of $K$ in direction $u$, that is all the points of $K$ that maximize the linear form $\langle u, \cdot \rangle$: 
    \begin{equation}
        K^u:=\set{y\in K}{\langle u, y\rangle=h_K(u)}.
    \end{equation}
    Moreover, if $\mathcal{U}=\{u_1,\ldots,u_k\}$ is an ordered family of vectors of $\R^{n+m}$, we write 
    \begin{equation}
        K^{\mathcal{U}}:=\left(\cdots\left(K^{u_1}\right)^{u_2}\cdots\right)^{u_k}.
    \end{equation}
\end{definition}

Note that $K^u$ is usually called an \emph{exposed} face of $K$. The notion of faces and exposed faces coincide for polytopes but are different in general. In this paper we only consider exposed faces that we call faces for simplicity. In the following, we show that the face of the fiber body is, in some sense, the fiber body of the faces. 

\begin{lemma}\label{lem:sectionrepdirection}
     Let $\mathcal{U}=\{u_1,\ldots,u_k\}$ be a an ordered family of linearly independent vectors of $W$, take $y\in\Sp{K} $ and let $\gamma: \pi(K)\to W$ be a section that represents $y$. Then $y\in\left(\Sp{K}\right)^\mathcal{U}$ if and only if $\gamma (x)\in \left(K_x\right)^\mathcal{U}$ for almost all $x\in \pi(K)$. In particular we have that 
     \begin{equation}\label{eq:repofface}
        \left( \Sp{K}\right)^{\mathcal{U}}=\set{\int_{\pi(K)}\gamma(x) \dd x}{\gamma \text{ section such that} \ \gamma(x)\in \left(K_x\right)^{\mathcal{U}} \text{ for all }x}.
     \end{equation}

    \begin{proof}
        Suppose first that $\mathcal{U}=\{u\}.$
        Assume that $\gamma(x)$ is not in $\left(K_x\right)^u$ for all $x$ in a set of non--zero measure $\mathscr{O}\subset \pi(K)$. Then there exists a measurable function $\xi:\pi(K)\to W$ with $\langle u,\xi\rangle\geq0$ and $\langle u,\xi(x)\rangle>0$ for all $x\in\mathscr{O}$, such that $\tilde{\gamma}:=\gamma+\xi$ is a section (for example you can take $\tilde{\gamma}(x)$ to be the nearest point on $K_x$ of $\gamma(x)+u$). Let $\tilde{y}:=\int_{\pi(K)}\tilde{\gamma}$ . Then $\langle u,\tilde{y}\rangle =\langle u, y\rangle + \int_{\pi(K)}\langle u, \xi\rangle >\langle u, y\rangle$. Thus $y$ does not belong to the face $\left(\Sp{K}\right)^u$.
        
        Suppose now that $y$ is not in the face $\left(\Sp{K}\right)^u$. Then there exists $\tilde{y}\in \Sp{K}$ such that $\langle u, \tilde{y}\rangle > \langle u, y\rangle$. Let $\tilde{\gamma}$ be a section that represents $\tilde{y}$. It follows that $\int_{\pi(K)}\langle u, \tilde{\gamma}\rangle > \int_{\pi(K)}\langle u, \gamma \rangle$. This implies the existence of a set $\mathscr{O}\subset \pi(K)$ of non--zero measure where $\langle u, \tilde{\gamma}(x)\rangle > \langle u, \gamma(x) \rangle$ for all $x\in\mathscr{O}$. Thus for all $x\in\mathscr{O}$, $\gamma(x)$ does not belong to the face $\left(K_x\right)^u$.
        
        In the case $\mathcal{U}=\{u_1,\ldots,u_{k+1}\}$ we can apply inductively the same argument. Replace $\Sp{K}$ by $\left(\Sp{K}\right)^{\{u_1,\ldots,u_k\}}$ and $u$ by $u_{k+1}$, and use the representation of $\left(\Sp{K}\right)^{\{u_1,\ldots,u_k\}}$ given by~\eqref{eq:repofface}.
    \end{proof}
\end{lemma}

Using the same strategy in the proof of Proposition~\ref{prop:supportaverage} we obtain the following formula.

\begin{lemma}\label{lem:segment}
    For every $u,v\in W$, $h_{ (\Sp{K})^u}(v)=\int_{\pi(K)} h_{(K_x)^u}(v)\ \dd x$.
\end{lemma}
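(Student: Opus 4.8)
The plan is to mirror the argument used for Proposition~\ref{prop:supportaverage}, but now carried out inside the face $(\Sp{K})^u$ rather than inside $\Sp{K}$ itself, using the faithful‐section description from Corollary~\ref{cor:sectionrepdirection} to keep everything pointwise rather than almost everywhere. Fix $u,v\in W$. By definition of the support function and of the face $(\Sp{K})^u$, we have
\begin{equation}
  h_{(\Sp{K})^u}(v)=\max\set{\langle v,y\rangle}{y\in (\Sp{K})^u}.
\end{equation}
By Corollary~\ref{cor:sectionrepdirection}, every $y\in(\Sp{K})^u$ is of the form $y=\int_{\pi(K)}\gamma(x)\,\dd x$ for a section $\gamma$ with $\gamma(x)\in(K_x)^u$ for all $x$, and conversely every such integral lies in $(\Sp{K})^u$. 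Hence
\begin{equation}
  h_{(\Sp{K})^u}(v)=\sup\set{\int_{\pi(K)}\langle v,\gamma(x)\rangle\,\dd x}{\gamma \text{ section},\ \gamma(x)\in (K_x)^u\ \forall x}.
\end{equation}

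For the inequality $\leq$, note that for any admissible $\gamma$ and every $x$ we have $\langle v,\gamma(x)\rangle\le h_{(K_x)^u}(v)$ since $\gamma(x)\in (K_x)^u$; integrating over $\pi(K)$ gives the bound, so the supremum is at most $\int_{\pi(K)}h_{(K_x)^u}(v)\,\dd x$. For the reverse inequality $\geq$, I want to produce a single admissible section $\gamma_{u,v}$ that achieves the pointwise maximum: for each $x$, the set $(K_x)^u$ is a nonempty compact convex subset of $W$, and the linear form $\langle v,\cdot\rangle$ attains its maximum $h_{(K_x)^u}(v)$ on it; choosing $\gamma_{u,v}(x)$ to be a maximizer realizes $\langle v,\gamma_{u,v}(x)\rangle=h_{(K_x)^u}(v)$ for every $x$. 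If $\gamma_{u,v}$ is measurable, then it is an admissible section and $\int_{\pi(K)}\langle v,\gamma_{u,v}\rangle\,\dd x=\int_{\pi(K)}h_{(K_x)^u}(v)\,\dd x$, giving the matching lower bound and hence equality. One should also observe that $(K_x)^u=(K^u)_x$, so this is really just applying Proposition~\ref{prop:supportaverage} to the convex body $K^u$ with the same projection, once one knows $\Sp{(K^u)}=(\Sp K)^u$, which is exactly the content of Corollary~\ref{cor:sectionrepdirection}; phrasing it this way makes the measurability a consequence of the already‑established Proposition~\ref{prop:supportaverage} applied to $K^u$.

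The main obstacle is the measurable‑selection point: we need a measurable section $x\mapsto\gamma_{u,v}(x)$ whose value maximizes $\langle v,\cdot\rangle$ over $(K_x)^u$ at every $x$. This is precisely the kind of selection invoked (without detailed proof) in Proposition~\ref{prop:supportaverage} for the analogous map on $K_x$, so the cleanest route is to reduce to that statement: apply Proposition~\ref{prop:supportaverage} and Corollary~\ref{cor:sectionrepdirection} to the convex body $K^u\in\KK(\R^{n+m})$, using $\pi(K^u)\subseteq\pi(K)$ and $(K^u)_x=(K_x)^u$ (which is empty exactly for those $x$ where $u$ already separates, a set one handles by noting the integrand vanishes or is defined as $-\infty$ on a measure‑zero or irrelevant set — in fact $\pi(K^u)$ is closed and the faces vary upper semicontinuously, as recalled before Example~\ref{ex:counterex_continuity}). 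I would write the proof in two sentences: first record $(K_x)^u=(K^u)_x$ and $(\Sp K)^u=\Sp{(K^u)}$ (Corollary~\ref{cor:sectionrepdirection}), then invoke Proposition~\ref{prop:supportaverage} for $K^u$ to conclude $h_{(\Sp K)^u}(v)=h_{\Sp{(K^u)}}(v)=\int_{\pi(K^u)}h_{(K^u)_x}(v)\,\dd x=\int_{\pi(K)}h_{(K_x)^u}(v)\,\dd x$, the last step because $h_{(K_x)^u}(v)=0$ or is otherwise negligible off $\pi(K^u)$.
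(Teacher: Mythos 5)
Your first, direct argument is essentially the proof the paper has in mind: the lemma is stated right after Corollary~\ref{cor:sectionrepdirection} with the remark that it follows ``following the proof of Proposition~\ref{prop:supportaverage}'', i.e.\ one writes $h_{(\Sp{K})^u}(v)$ as the supremum of $\int_{\pi(K)}\langle v,\gamma(x)\rangle\,\dd x$ over sections with $\gamma(x)\in (K_x)^u$ for all $x$, bounds it above by $\int_{\pi(K)}h_{(K_x)^u}(v)\,\dd x$, and attains the bound with a measurable section maximizing $\langle v,\cdot\rangle$ on $(K_x)^u$ at every $x$. The measurable-selection point you flag is no worse here than in Proposition~\ref{prop:supportaverage} itself: you need a section with $\gamma(x)\in\left((K_x)^{u}\right)^{v}$ for all $x$, which is exactly the kind of selection the paper already invokes there (and again, for iterated faces, in Lemma~\ref{lem:sectionrepdirection} and Proposition~\ref{prop:faithfulsec}), so you could simply cite or repeat that step and stop.

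The ``cleanest route'' you then advocate, and on which your final chain of equalities rests, is however based on false identities. Since $u\in W$, a point $y\in K_x$ belongs to $(K^u)_x$ only if $\langle u,y\rangle=h_K(u)=\max_{x'\in\pi(K)}h_{K_{x'}}(u)$, so $(K^u)_x$ is empty at every $x$ whose fiber does not attain the global maximum, whereas $(K_x)^u$ is nonempty for every $x\in\pi(K)$; hence $(K^u)_x\neq(K_x)^u$ and $\Sp{(K^u)}\neq(\Sp{K})^u$ in general. Concretely, let $K$ be the unit disc in $\R^2$ with $V=\R e_1$, $W=\R e_2$, $u=e_2$: then $K^u=\{(0,1)\}$, so $\pi(K^u)=\{0\}$ has measure zero and $\Sp{(K^u)}=\{0\}$, while $(\Sp{K})^u=\{\pi/2\}$ and $\int_{-1}^{1}h_{(K_x)^u}(u)\,\dd x=\pi/2$. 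Thus the first and third equalities in your concluding display fail, the discrepancy is not ``negligible off $\pi(K^u)$'', and Corollary~\ref{cor:sectionrepdirection} does not assert $(\Sp{K})^u=\Sp{(K^u)}$ — it describes $(\Sp{K})^u$ via sections through the fiberwise faces $(K_x)^u$, whose union over $x$ is in general strictly larger than $K^u$ (and not even convex). Delete the reduction and keep your direct argument, closing the selection point as in Proposition~\ref{prop:supportaverage}.
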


The fiber body behaves well under the action of $\hbox{GL}(V)\oplus\hbox{GL}(W)$ as a subgroup of $\hbox{GL}(\R^{n+m}).$

\begin{proposition}
    Let $g_n\in\hbox{GL}(V)$, $g_m\in \hbox{GL}(W)$ and $K\in \KK(\R^{n+m})$. Then
    \begin{equation}
        \Sp{\Big( (g_n \oplus g_m)(K) \Big)} = |\det(g_n)| \cdot g_m \Big( \Sp{K} \Big).
    \end{equation}
\end{proposition}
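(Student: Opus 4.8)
The plan is to reduce everything to the support function formula~\eqref{eq:supportaverage} together with Proposition~\ref{prop:propertieshK}. Write $K' := (g_n\oplus g_m)(K)$. The first step is to identify the projection and the fibers of $K'$. Since $g_n\oplus g_m$ respects the decomposition $\R^{n+m}=V\oplus W$ and $\pi$ is the projection onto $V$, one has $\pi\circ(g_n\oplus g_m)=g_n\circ\pi$, hence $\pi(K')=g_n(\pi(K))$; moreover for $x\in\pi(K')$ the point $(x,y)$ lies in $K'$ if and only if $(g_n^{-1}x,g_m^{-1}y)\in K$, which gives the fiber identity $K'_x=g_m\big(K_{g_n^{-1}x}\big)$.

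The second step is a direct computation of the support function. Fix $u\in W$. By~\eqref{eq:supportaverage} and the fiber identity,
\[
    h_{\Sp{K'}}(u)=\int_{\pi(K')}h_{K'_x}(u)\,\dd x=\int_{g_n(\pi(K))}h_{g_m(K_{g_n^{-1}x})}(u)\,\dd x.
\]
Applying Proposition~\ref{prop:propertieshK}(ii) to the linear map $g_m$ gives $h_{g_m(L)}(u)=h_L(g_m^{t}u)$ for every convex body $L\subset W$, and the change of variables $x=g_n(y)$, whose Jacobian is $|\det g_n|$, turns the integral into
\[
    |\det g_n|\int_{\pi(K)}h_{K_y}(g_m^{t}u)\,\dd y=|\det g_n|\,h_{\Sp{K}}(g_m^{t}u)=h_{|\det(g_n)|\,g_m(\Sp{K})}(u),
\]
where the last equality uses Proposition~\ref{prop:propertieshK}(ii) once more together with the elementary identity $h_{\lambda C}=\lambda h_C$ for $\lambda\ge 0$. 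Since this holds for all $u\in W$, Proposition~\ref{prop:propertieshK}(i) yields the claimed equality of convex bodies.

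Alternatively one can argue directly on sections: the assignment $\gamma\mapsto \gamma'$ with $\gamma'(x):=g_m\big(\gamma(g_n^{-1}x)\big)$ is a bijection between measurable sections of $\pi|_K$ and of $\pi|_{K'}$ (its inverse being $\gamma'\mapsto g_m^{-1}\circ\gamma'\circ g_n$), and the change of variables $x=g_n(y)$ together with the linearity of $g_m$ gives $\int_{\pi(K')}\gamma'(x)\,\dd x=|\det g_n|\,g_m\big(\int_{\pi(K)}\gamma(y)\,\dd y\big)$; taking the union over all sections then gives the result via Definition~\ref{def:fiberbody}. The only points requiring a little care are the identification of the fibers of $K'$ in the first step and the appearance of the absolute value $|\det(g_n)|$, which is exactly the Jacobian factor in the change of variables formula; note also that when $\dim\pi(K)<n$ both sides are $\{0\}$ since $g_n$ is invertible, so the statement is consistent with the degenerate case.
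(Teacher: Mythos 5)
Your proof is correct and follows essentially the same route as the paper: establish the fiber identity $\big((g_n\oplus g_m)(K)\big)_x=g_m\big(K_{g_n^{-1}x}\big)$ and $\pi(K')=g_n\pi(K)$, then apply the support-function formula~\eqref{eq:supportaverage} with the change of variables $x\mapsto g_n^{-1}x$ and Proposition~\ref{prop:propertieshK}(ii). The extra argument via sections and the remark on the degenerate case are fine but not needed.
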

\begin{proof}
    This is a quite straightforward consequence of the definitions. After observing that
    \begin{equation}
        \Big( (g_n \oplus g_m)(K) \Big)_x = g_m \Big( K_{g_n^{-1}(x)} \Big)
    \end{equation}
    and $\pi\left(  (g_n \oplus g_m)(K)\right)= g_n \pi(K)$, use equation~\eqref{eq:supportaverage} with the change of variables $x\mapsto g_n^{-1}x$. By Proposition~\ref{prop:propertieshK}--$\mathit{(ii)}$ 
    we have $h_{g_m K_x}(u)=h_{K_x}(g_m^T u)$, so the thesis follows.
\end{proof}

\subsection{Regularity of the sections}
By definition, a point $y$ of the fiber body $\Sp{K}$ is the integral $y=\int_{\pi(K)}\gamma(x) \dd x$ of a \emph{measurable} section $\gamma$. Thus $\gamma$ can be modified on a set of measure zero without changing the point $y$, i.e. $y$ only depends on the $L^1$ class of $\gamma$. It is natural to ask what our favourite representative in this $L^1$ class will be and how regular can it be. In the case where $K$ is a polytope, $\gamma$ can always be chosen continuous. However if $K$ is not a polytope and if $y$ belongs to the boundary of $\Sp{K}$, a continuous representative may not exist. This is due to the fact that, in general, the map $x\mapsto K_x$ is only upper semicontinuous, see~\cite[Section~$6$]{KhovanskiiFamily}. 

\begin{example}\label{ex:counterex_continuity}
 Consider the function $f : S^1 \to \R$ such that
\begin{equation}
    f(x,y) =    \begin{cases}
                    0 & x<0 \\
                    1 & x\geq 0
                \end{cases}
\end{equation}
and let $K := \conv(\text{graph}(f)) \subset \R^3$ in Figure~\ref{fig:counterex_continuity}. This is a semialgebraic convex body, whose boundary may be subdivided in $8$ distinct pieces: two half--discs lying on the planes $\{z=0\}$ and $\{z=1\}$, two triangles with vertices $(-1,0,0),(0,\pm 1,1)$ and $(1,0,1),(0,\pm 1,0)$ respectively, four cones with vertices $(0,\pm 1,0),(0,\pm 1, 1)$.
\begin{figure}
    \centering
    \includegraphics[width=0.4\textwidth]{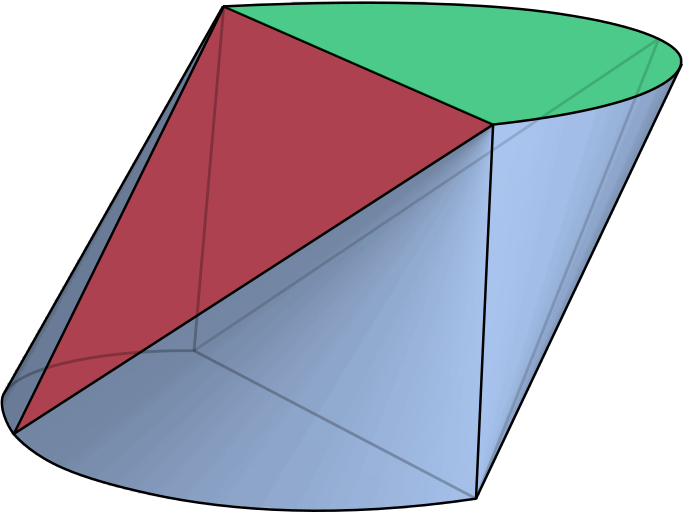}
    \caption{The convex body of Example~\ref{ex:counterex_continuity}. In its boundary there are $2$ green half--discs, $2$ red triangles and $4$ blue cones.}
    \label{fig:counterex_continuity}
\end{figure}
Let $\pi : \R^3\to \R$ be the projection on the first coordinate $\pi(x,y,z)=x$. Then the point $p\in\Sp{K}\subset \R^2$ maximizing the linear form associated to $(y,z)=(1,0)$ must have only non--continuous sections. This can be proved using the representation of a face given by~\eqref{eq:repofface}.
\end{example}

We prove that most of the points of the fiber body have a continuous representative.

\begin{proposition}
    Let $K\in \KK(\R^{n+m})$ and let $\Sp{K}$ be its fiber body. The set of its points that can be represented by a continuous section is convex and dense. In particular, all interior points of $\Sp{K}$ can be represented by a continuous section.
\end{proposition}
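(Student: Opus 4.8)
The plan is to prove the three assertions in order: convexity of the set of "continuously representable" points, density, and then deduce that all interior points are continuously representable.

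First I would set up notation: let $C \subseteq \Sp{K}$ denote the set of points $y$ that admit a continuous representing section $\gamma : \pi(K) \to W$. For convexity, take $y_0, y_1 \in C$ with continuous representatives $\gamma_0, \gamma_1$, and $t \in [0,1]$. The natural candidate for a continuous section representing $ty_1 + (1-t)y_0$ is the convex combination $\gamma_t := t\gamma_1 + (1-t)\gamma_0$. The only thing to check is that this is still a section, i.e. that $\gamma_t(x) \in K_x$ for every $x \in \pi(K)$; but $K_x$ is convex (it is the intersection of the convex body $K$ with the affine fiber $\pi^{-1}(x)$, transported to $W$), so $\gamma_t(x)$ is a convex combination of two points of $K_x$ and hence lies in $K_x$. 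Continuity of $\gamma_t$ is immediate, and by linearity of the integral $\int_{\pi(K)} \gamma_t(x)\,\dd x = t y_1 + (1-t) y_0$. This settles convexity.

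For density, the idea is to approximate an arbitrary point $y \in \Sp{K}$ — represented by some measurable section $\gamma$ — by points with continuous representatives. Here I would exploit the inner regularity of Lebesgue measure together with the fact that $\pi(K)$ is compact and that $K_x$ varies upper semicontinuously (as recalled in the excerpt, referencing \cite{KhovanskiiFamily}). Concretely: fix a point $y^\circ$ in the relative interior of $\Sp{K}$ that \emph{is} continuously representable — for instance the barycentric-type section coming from a continuous selection on the interior of $\pi(K)$, or more simply use that $\gamma(x)$ can always be replaced near $\partial \pi(K)$ by something controlled. On a large compact subset $A \subseteq \pi(K)$ of measure close to $\Vol(\pi(K))$ where $\gamma$ can be taken continuous (Lusin's theorem gives such an $A$ on which $\restr{\gamma}{A}$ is continuous), extend $\restr{\gamma}{A}$ to a continuous section $\tilde\gamma$ on all of $\pi(K)$ using a Michael-type continuous selection theorem for the lower semicontinuous-hulled multimap $x \mapsto K_x$, or by patching with a fixed continuous selection and a partition of unity. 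Then $\int \tilde\gamma$ differs from $y = \int \gamma$ by at most a quantity controlled by $\Vol(\pi(K) \setminus A) \cdot \operatorname{diam}(K)$, which is small. Hence $C$ is dense.

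Finally, the third assertion follows formally: $C$ is a convex dense subset of the convex body $\Sp{K}$, so its closure is all of $\Sp{K}$; a convex set whose closure is $\Sp{K}$ must contain the interior of $\Sp{K}$ (a standard fact: $\operatorname{relint}(\overline{C}) = \operatorname{relint}(C) \subseteq C$ for convex $C$). Thus every interior point of $\Sp{K}$ lies in $C$.

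**The main obstacle** I anticipate is the density step, specifically producing the continuous extension $\tilde\gamma$ of a section defined on a large compact set: one must invoke a continuous selection theorem, and the multimap $x \mapsto K_x$ is only upper semicontinuous in general (Example~\ref{ex:counterex_continuity} shows continuous sections genuinely fail at some boundary points), so one cannot directly apply Michael's theorem to $x \mapsto K_x$ itself. The fix is to work with its lower semicontinuous regularization (whose values still lie in $K_x$ on a dense set, and whose integral is unchanged up to measure zero), or to restrict attention to the open set where $\dim K_x$ is locally constant; handling the measure-zero discrepancy carefully is the delicate point. The convexity and the deduction of the interior-points statement are routine.
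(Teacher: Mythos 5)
Your convexity argument and your deduction that interior points are continuously representable coincide with the paper's proof and are fine. The divergence, and the genuine gap, is in the density step. You reduce density to extending a partial continuous section (obtained from Lusin's theorem on a large compact set $A\subset\pi(K)$) to a continuous section on all of $\pi(K)$, via a Michael-type selection theorem. As you note yourself, Michael's theorem needs lower semicontinuity of $x\mapsto K_x$, which fails in general; but the repairs you sketch do not close this hole. If you pass to a lower semicontinuous regularization of the fiber map, then precisely at the points where lower semicontinuity fails the values $\gamma(x)$, $x\in A$, need not lie in the regularized fibers, so the selection theorem cannot be invoked to extend $\gamma|_A$; and the assertion that the regularization ``leaves the integral unchanged up to measure zero'' is itself an unproved claim of the same nature as what you are trying to show. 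The alternative of ``patching with a fixed continuous selection and a partition of unity'' presupposes that a globally continuous section exists and that it can be glued continuously to $\gamma|_A$, which is again essentially the difficulty at hand. So, as written, the density argument does not go through.

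The paper sidesteps all selection machinery with a short device worth knowing: approximate the measurable section $\gamma$ in $L^1$ by an \emph{arbitrary} continuous map $g:\pi(K)\to W$ (no section constraint at all), and then set $\tilde\gamma(x):=p\big(K_x,g(x)\big)$, the nearest-point projection of $g(x)$ onto the convex fiber $K_x$. By construction $\tilde\gamma$ is a section; it is continuous by \cite[Lemma~$1.8.11$]{bible}; and since $\gamma(x)\in K_x$ and the metric projection onto a convex set does not increase distances to points of that set, one has $\|\tilde\gamma(x)-\gamma(x)\|\le\|g(x)-\gamma(x)\|$ pointwise, hence $\|\tilde\gamma-\gamma\|_{L^1}<\epsilon$. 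This yields density in two lines, exactly where your route requires a continuous-extension theorem that is not available for a merely upper semicontinuous fiber map.
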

\begin{proof}
    Consider the set
    \begin{equation}
        C = \set{\int_{\pi(K)}\gamma(x) \dd x}{\gamma :\pi(K) \to K \text{ continuous section}}
    \end{equation}
    that is clearly contained in the fiber body $\Sp{K}$. It is convex: take $a,b \in C$ represented by continuous sections $\alpha,\beta :\pi(K) \to K$ respectively. Then any convex combination can be written as $c = t a + (1-t) b = \int_{\pi(K)} \Big( t \alpha(x) + (1-t)\beta(x)\Big) dx$. Since $t \alpha + (1-t)\beta$ is a continuous section for any $t\in[0,1]$, $C$ is convex.
    
    We now need to prove that the set $C$ is also dense in $\Sp{K}$. Let $\gamma$ be a measurable section; by definition it is a measurable function $\gamma:\pi(K) \to W$, such that $\gamma(x)\in K_x$ for all $x\in\pi(K)$. For every $\epsilon>0$ there exists a continuous function $g:\pi(K) \to W$ with $\| \gamma - g\|_{L^1}<\epsilon$, but this is not necessarily a section of $K$, since a priori $g(x)$ can be outside $K_x$. Hence define $\tilde{\gamma}:\pi(K) \to W$ such that
    \begin{equation}
        \tilde{\gamma}(x) = p\Big( K_x, g(x) \Big)
    \end{equation}
    where $p(A,a)$ is the nearest point map at $a$ with respect to the convex set $A$. By \cite[Lemma~$1.8.11$]{bible} $\tilde{\gamma}$ is continuous and by definition $\text{graph}(\tilde{\gamma}) \subset K$. Therefore $\int_{\pi(K)} \tilde{\gamma} \in C$. Moreover,
    \begin{equation}
        \| \gamma - \tilde{\gamma}\|_{L^1} \leq \| \gamma - g\|_{L^1} <\epsilon
    \end{equation}
    hence the density is proved. As a consequence we get that $ \text{int}\Sp{K} \subseteq C \subseteq \Sp{K}$ so all the interior points of the fiber body have a continuous representative.
\end{proof}

To our knowledge, the regularity of the sections needed to represent all points is not known. 

\subsection{Strict convexity} 
In the case where $K^u$ consists of only one point we say that $K$ is \emph{strictly convex in direction $u$}. Moreover, a convex body is said to be \emph{strictly convex} if it is strictly convex in every direction. We now investigate this property for fiber bodies.

\begin{proposition}\label{prop:strictlyconvex}
    Let $K\in\KK(\R^{n+m})$ and let us fix a vector $u\in W$. The following are equivalent:
    \begin{enumerate}
        \item $\Sp{K}$ is strictly convex in direction $u$;  \\
        \item almost all the fibers $K_x$ are strictly convex in direction $u$.
    \end{enumerate}
\end{proposition}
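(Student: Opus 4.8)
The plan is to reduce both conditions to statements about the single face $(\Sp{K})^u$ and the faces $(K_x)^u$, and then to apply Lemma~\ref{lem:segment}. By the definition of strict convexity in a direction, condition~(1) says that $(\Sp{K})^u$ is a single point, and condition~(2) says that $(K_x)^u$ is a single point for almost every $x$. I would use the elementary fact that a non--empty compact convex set $C\subseteq W$ is a single point if and only if $h_C(v)+h_C(-v)=\max_{x\in C}\langle v,x\rangle-\min_{x\in C}\langle v,x\rangle$ vanishes for every $v\in W$ (each such quantity is $\ge 0$, and it vanishes for all $v$ exactly when every linear form is constant on $C$). Applying this to $C=(\Sp{K})^u$ and using Lemma~\ref{lem:segment} for both $v$ and $-v$, condition~(1) becomes
\[
\int_{\pi(K)}\rho_v(x)\,\dd x=0\quad\text{for every }v\in W,
\]
where $\rho_v(x):=h_{(K_x)^u}(v)+h_{(K_x)^u}(-v)\ge 0$; and, by the same fact applied to $C=(K_x)^u$, condition~(2) becomes: for almost every $x$, one has $\rho_v(x)=0$ for every $v\in W$.

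The implication $(2)\Rightarrow(1)$ is then immediate: if $\rho_v(x)=0$ for all $v$ and almost all $x$, then in particular, for each fixed $v$, the nonnegative integrand $\rho_v$ vanishes almost everywhere, so the displayed integral is $0$ for every $v$, whence $(\Sp{K})^u$ is a point.

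For the converse $(1)\Rightarrow(2)$, suppose the displayed integral vanishes for every $v\in W$. Since $\rho_v\ge 0$, for each fixed $v$ we obtain a set $N_v\subseteq\pi(K)$ of full measure on which $\rho_v=0$. Choosing a countable dense subset $D\subseteq W$ and intersecting the sets $N_v$ for $v\in D$, we get a single set $N$ of full measure on which $\rho_v=0$ for all $v\in D$ simultaneously. Since support functions are Lipschitz, the map $v\mapsto\rho_v(x)$ is continuous for each fixed $x$, so $\rho_v(x)=0$ for all $v\in W$ whenever $x\in N$; this is exactly condition~(2).

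I expect the only delicate point to be this interchange of quantifiers in $(1)\Rightarrow(2)$ --- passing from ``for every $v$, $\rho_v=0$ outside a null set depending on $v$'' to ``outside one null set, $\rho_v=0$ for every $v$'' --- which is resolved using the separability of $W$ together with the continuity of support functions in the direction. Everything else is bookkeeping once Lemma~\ref{lem:segment} is available. Alternatively, one could argue directly with sections à la Corollary~\ref{cor:sectionrepdirection}, producing two sections valued in the faces $(K_x)^u$ with distinct integrals as soon as a positive--measure set of fibers fails to be strictly convex in direction $u$; but this runs into the same countable--dense--set argument, so the support--function route seems the shortest.
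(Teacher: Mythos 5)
Your proof is correct, and it differs from the paper's in a way worth noting. The paper treats the two implications asymmetrically: for $(2)\Rightarrow(1)$ it invokes Proposition~\ref{prop:propertieshK}--$\mathit{(iii)}$, rephrasing strict convexity in direction $u$ as differentiability of the support function at $u$, and then differentiates $h_{\Sp{K}}(u)=\int_{\pi(K)}h_{K_x}(u)\,\dd x$ under the integral sign (a domination step it leaves implicit); for $(1)\Rightarrow(2)$ it uses Lemma~\ref{lem:segment} together with a subadditivity argument, fixing $v_1,v_2$ and concluding that $h_{(K_x)^u}(v_1+v_2)=h_{(K_x)^u}(v_1)+h_{(K_x)^u}(v_2)$ almost everywhere, hence that $h_{(K_x)^u}$ is linear for almost all $x$. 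You instead run both directions through the single width criterion ($C$ is a point iff $h_C(v)+h_C(-v)=0$ for all $v$) combined with Lemma~\ref{lem:segment}, which makes $(2)\Rightarrow(1)$ a triviality about integrating a nonnegative function and avoids any differentiability or differentiation-under-the-integral considerations. Your version is also more scrupulous on the one genuinely delicate point: the null set in the a.e.\ statement depends a priori on the direction ($v$ for you, the pair $(v_1,v_2)$ for the paper), and you resolve the quantifier exchange explicitly via a countable dense subset of $W$ and the Lipschitz continuity of $v\mapsto h_{(K_x)^u}(v)+h_{(K_x)^u}(-v)$, whereas the paper passes over this step in silence. The only ingredient you use beyond the paper's toolkit is the (standard) measurability/integrability of $x\mapsto h_{(K_x)^u}(v)$, which is already implicit in Lemma~\ref{lem:segment}, so nothing is missing; the trade-off is that the paper's $(2)\Rightarrow(1)$ is shorter once Proposition~\ref{prop:propertieshK}--$\mathit{(iii)}$ is granted, while yours is more elementary and self-contained.
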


\begin{proof}
By Proposition~\ref{prop:propertieshK}--$\mathit{(iii)}$, a convex body is strictly convex in direction $u$ if and only if its support function is $\mathcal{C}^1$ at $u$. Therefore, if almost all the fibers $K_x$ are strictly convex in $u$, then, the convex body being compact, the support function $h_{\Sp{K}}(u)=\int_{\pi(K)} h_{K_x}(u) \dd x$ is $\mathcal{C}^1$ at $u$, i.e. the fiber body is strictly convex in that direction. 

Now suppose that $\Sp{K}$ is strictly convex in direction $u$, i.e. $\left(\Sp{K}\right)^u$ consists of just one point $y$. This means that the support function of this face is linear and it is given by $\langle y, \cdot \rangle$. We now prove that the support function of $K_x^u$ is linear for almost all $x$, and this will conclude the proof. Lemma~\ref{lem:segment} implies that
\begin{equation}
    h_{\left(\Sp{K}\right)^u} = \int_{\pi(K)}h_{K_x^u} \dd x= \langle y, \cdot \rangle.
\end{equation}
For any two vectors $v_1,v_2$, we have
\begin{equation}
    \langle y, v_1 + v_2 \rangle = \int_{\pi(K)} h_{K_x^u}(v_1+v_2) dx \leq \int_{\pi(K)} h_{K_x^u}(v_1) dx + \int_{\pi(K)} h_{K_x^u}(v_2) dx = \langle y, v_1 \rangle + \langle y, v_2 \rangle
\end{equation}
thus the inequality in the middle must be an equality. But since $h_{K_x^u}(v_1+v_2) \leq  h_{K_x^u}(v_1) + h_{K_x^u}(v_2)$, we get that this is an equality for almost all $x$, i.e. the support function of $K_x^u$ is linear for almost every $x\in {\pi(K)}$. Therefore almost all the fibers are strictly convex.
\end{proof}

The elliptope in Section \ref{sec:elliptope} furnishes an example of a convex body $\mathcal{E}$ and a projection $\pi$ such that the fiber body $\Sp{\mathcal{E}}$ is strictly convex, but the two fibers $\mathcal{E}_{\pm 1}$ are segments, hence not strictly convex.

\section{Puffed polytopes}\label{sec:Puffed}

In this section we introduce a particular class of convex bodies arising from polytopes. A known concept in the context of hyperbolic polynomials and hyperbolicity cones is that of the \emph{derivative cone}; see \cite{renegarhyperbolic} or \cite{sanyalderivativecones}. Since we are dealing with compact objects, we will repeat the same construction in affine coordinates, i.e., for polytopes instead of polyhedral cones.

Let $P$ be a full--dimensional polytope in $\R^N$, containing the origin, with $d$ facets given by affine equations $l_1(x_1,\ldots ,x_N)=a_1, \ldots ,l_d(x_1,\ldots,x_N)=a_d$. Consider the polynomial
\begin{equation}\label{eq:polytope}
    p(x_1, \ldots ,x_N) = \prod_{i=1}^d \left(l_i(x_1,\ldots ,x_N) -a_i\right).
\end{equation}
Its zero locus is the algebraic boundary of $P$, i.e. the algebraic closure of the boundary, in the Zariski topology, as in \cite{sinn_alg_bound}. Consider the homogenization of $p$, that is $\tilde{p}(x_1, \ldots ,x_N,w)= \prod_{i=1}^d \left(l_i(x_1,\ldots ,x_N) -a_i w\right)$. It is the algebraic boundary of a polyhedral cone and it is hyperbolic with respect to the direction $(0,\ldots,0,1)\in \R^{N+1}$. Then for all $i<d$ the polynomial
\begin{equation}\label{eq:der_puff}
    \left( \frac{\partial^i}{\partial w^i} \tilde{p} \right) (x_1,\ldots ,x_N, 1)
\end{equation}
is the algebraic boundary of a convex set containing the origin, see \cite{sanyalderivativecones}. This allows us to introduce the following definition.

\begin{definition}\label{def:puffed}
    Let $Z_i$ be the zero locus of~\eqref{eq:der_puff} in $\R^N$. The \emph{$i$-th puffed $P$} is the closure of the connected component of the origin in $\R^N\setminus Z_i$. We denote it by $\puff{i}{P}$.
\end{definition}
In particular the puffed polytopes are always spectrahedra \cite[Corollary 1.3]{Branden:HyperbolicityCones}.
As the name suggests, the puffed polytopes $\puff{i}{P}$ are fat, inflated versions of the polytope $P$ and in fact contain $P$. On the other hand, despite the definition involves a derivation, the operation of ``taking the puffed'' does not behave as a derivative. In particular, it does not commute with the Minkowski sum, that is, in general for polytopes $P_1, P_2$:
\begin{equation}
    \puff{1}{P_1+P_2}\neq \puff{1}{P_1} + \puff{1}{P_2}.
\end{equation}
To show this with, we build a counterexample in dimension $N=2$.
\begin{example}
    Let us consider two squares $P_1 = \conv\{(\pm 1, \pm 1)\}$, $P_2 = \conv\{(0, \pm 1), (\pm 1, 0)\} \subset \R^2$.
    The first puffed square is a disc with radius half of the diagonal, so $\puff{1}{P_1}$ has radius $\sqrt{2}$ and $\puff{1}{P_2}$ has radius $1$. Therefore $\puff{1}{P_1} + \puff{1}{P_2}$ is a disc centered at the origin of radius $1+\sqrt{2}$. On the other hand $P_1 + P_2$ is an octagon. Its associated polynomial in \eqref{eq:polytope} is
    \begin{equation}
        p(x,y) = ((x+y)^2-9)((x-y)^2-9)(x^2-4)(y^2-4).
    \end{equation}
    Via the procedure explained above we obtain the boundary of this puffed octagon, as the zero locus of the following irreducible polynomial
    \begin{equation}
        2 x^6+7 x^4 y^2+7 x^2 y^4+2 y^6-88 x^4-193 x^2 y^2-88 y^4+918 x^2+918 y^2-2592.
    \end{equation}
    This is a curve with three real connected components, shown in violet in Figure~\ref{fig:puff_octa}.
    Clearly the puffed octagon is not a circle, hence $\puff{1}{P_1} + \puff{1}{P_2} \neq \puff{1}{P_1+P_2}$.
    \begin{figure}
    \centering
    \includegraphics[width=0.5\textwidth]{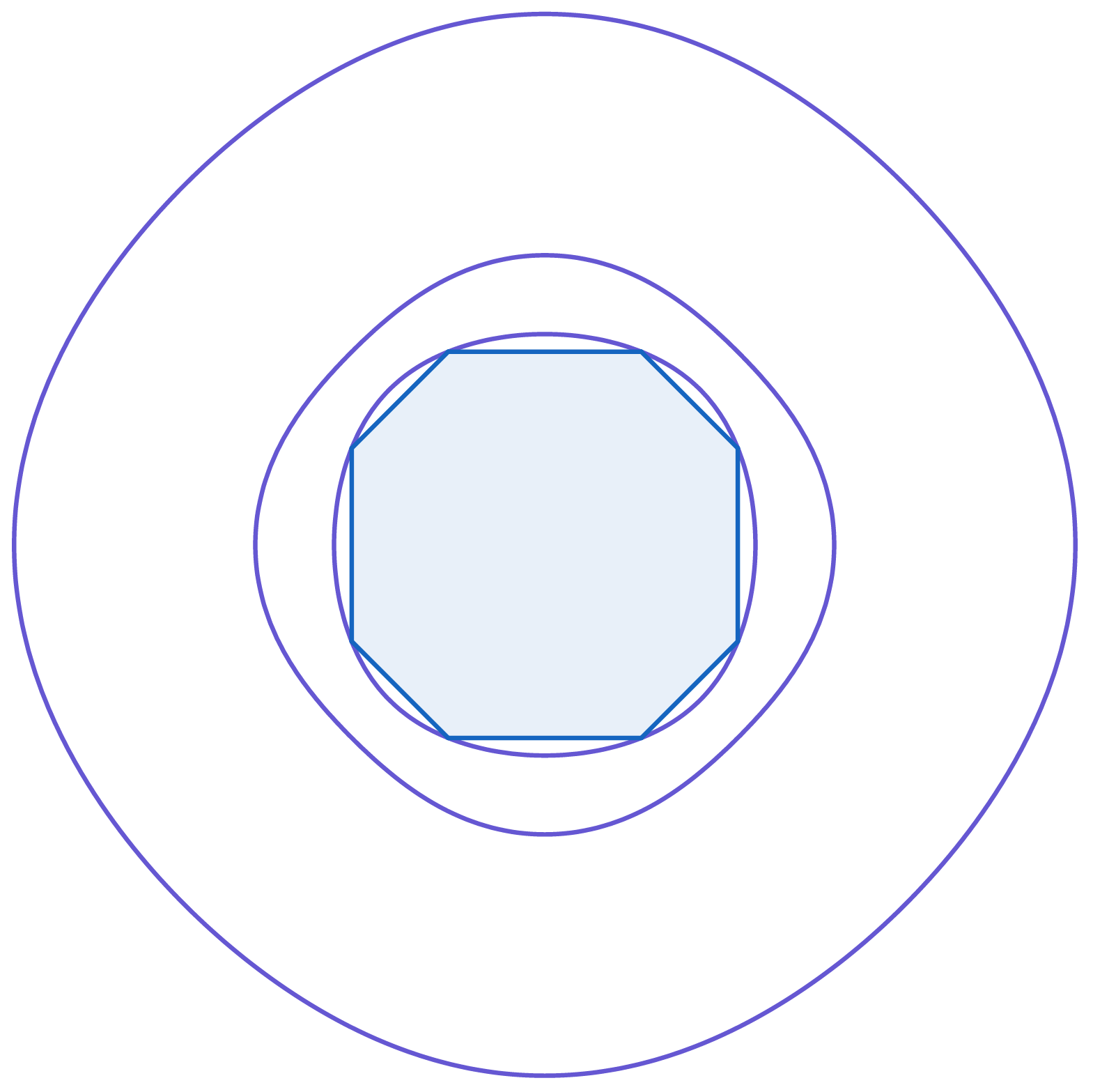}
    \caption{The octagon, in blue, and (the algebraic boundary of) its puffed octagon, in violet.}
    \label{fig:puff_octa}
    \end{figure}
\end{example}

\subsection{Strict convexity of the puffed polytopes}

Our aim is to study the strict convexity of the fiber body of a puffed polytope. In order to do so, we shall at first say something more about the boundary structure of a puffed polytope itself. In particular, we will see that the appropriate quantity to consider is the \emph{multiciplicity} of the faces, that is, their multiciplicity as zeroes of the polynomial defining the algebraic boundary. Indeed, a face $F\subset P$ will be in the boundary of $\puff{i}{P}$ for all $i$ less or equal than the multiplicity of $F$.

\begin{lemma}\label{lem:faces_puffedP}
    Let $P\subset \R^N$ be a full--dimensional polytope. Then all faces $F$ of $P$ of dimension $k<N-i$, are contained in the boundary of $\puff{i}{P}$.
\end{lemma}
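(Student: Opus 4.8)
The plan is to reduce the statement to a local question near a point of the relative interior of a low-dimensional face, and then exploit the fact that the puffed polytope is cut out by the derivative polynomial~\eqref{eq:der_puff}, whose real zero set passes through every such face. Concretely, let $F$ be a face of $P$ with $\dim F = k < N-i$, and pick a point $x_0$ in the relative interior of $F$. Since $x_0 \in F \subset \partial P$, it lies on the zero locus of the polynomial $p$ from~\eqref{eq:polytope}; in fact, because $x_0$ is in the relative interior of a $k$-face, exactly the facets containing $F$ vanish at $x_0$, and there are at least $N-k > i$ of them (a $k$-face of a full-dimensional polytope is the intersection of at least $N-k$ facets). So at least $i+1$ of the linear factors $l_j(x) - a_j$ of $p$ vanish at $x_0$.

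The key computation is then to understand the order of vanishing of $\left(\partial^i_w \tilde p\right)(x,1)$ at $x_0$. Homogenizing, $\tilde p(x,w) = \prod_{j=1}^d (l_j(x) - a_j w)$, and differentiating $i$ times in $w$ produces a sum of products in which $i$ of the $d$ factors have been replaced by constants $-a_j$ and the remaining $d-i$ factors are left untouched; setting $w=1$ gives a sum of terms each of which is a product of $d-i$ of the affine forms $l_j(x) - a_j$. At $x_0$, since at least $i+1$ of these affine forms vanish, every such product of $d-i$ of them still contains at least one vanishing factor (you can kill at most $i$ of the $\geq i+1$ vanishing forms by choosing which $i$ factors to differentiate). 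Hence $\left(\partial^i_w \tilde p\right)(x_0,1) = 0$, i.e. $x_0 \in Z_i$.

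It remains to upgrade "$x_0 \in Z_i$" to "$x_0 \in \partial\,\puff{i}{P}$", i.e. to check $x_0$ is on the boundary of the connected component of the origin in $\R^N \setminus Z_i$, not merely on $Z_i$. Here I would use two facts: first, $\puff{i}{P} \supseteq P$ (stated in the text), so $x_0 \in F \subseteq P \subseteq \puff{i}{P}$; second, $x_0$ cannot be an interior point of $\puff{i}{P}$ because it lies on $Z_i$, which contains the algebraic boundary of $\puff{i}{P}$ and whose complement's origin-component is by definition an open set disjoint from $Z_i$. Combining, $x_0 \in \puff{i}{P} \setminus \mathrm{int}\,\puff{i}{P} = \partial\,\puff{i}{P}$. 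Since $x_0$ was an arbitrary relative-interior point of $F$ and $\partial\,\puff{i}{P}$ is closed, we get $F \subseteq \partial\,\puff{i}{P}$.

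I expect the main obstacle to be the last step: one must be careful that a point of $Z_i$ lying in $P$ is genuinely on the topological boundary of the component of the origin, rather than, say, on a spurious real branch of $Z_i$ lying inside that component. The cleanest way around this is the containment $P \subseteq \puff{i}{P}$ together with the observation that $\mathrm{int}\,\puff{i}{P}$ is contained in the origin-component of $\R^N \setminus Z_i$, hence is disjoint from $Z_i$; then any point of $P$ lying on $Z_i$ is automatically in $\puff{i}{P} \setminus \mathrm{int}\,\puff{i}{P}$. A secondary (routine) point to verify carefully is the combinatorial claim that differentiating $i$ times in $w$ leaves every monomial in the resulting sum divisible by at least one of the affine forms vanishing at $x_0$; this is just the pigeonhole count $\#\{\text{vanishing forms}\} \geq i+1 > i = \#\{\text{forms one is allowed to remove}\}$.
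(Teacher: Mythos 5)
Your proof is correct and takes essentially the same route as the paper's: the paper likewise observes that a face of dimension $k<N-i$ lies on at least $N-k>i$ facet hyperplanes, so its points are zeros of $p$ of multiplicity greater than $i$ and hence remain zeros of the $i$-th $w$-derivative, and it then passes from membership in $Z_i$ to membership in $\partial\,\puff{i}{P}$ without further comment, exactly as you do (your Leibniz/pigeonhole expansion is just the explicit form of the multiplicity count). Your extra care at the last step is welcome; the only remark is that the inclusion of $\mathrm{int}\,\puff{i}{P}$ in the origin component is not purely definitional but uses that this component is open and convex (so it coincides with the interior of its closure), which is what the cited results on derivative cones in \cite{sanyalderivativecones} supply.
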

\begin{proof}
    Let $F$ be a $k-$face of $P$; it is contained in the zero set of the polynomial \eqref{eq:polytope}. Moreover $F$ arises as the intersection of at least $N-k$ facets (i.e. faces of dimension $N-1$), thus its points are zeros of multiplicity at least $N-k$. Hence, if $N-k>i$ the face $F$ is still in the zero set of \eqref{eq:der_puff}, i.e. it belongs to the boundary of $\puff{i}{P}$.
\end{proof}

The other direction is not always true: there may be $k$--faces of $P$, with $k\geq N-i$, whose points are zeros of \eqref{eq:der_puff} of multiplicity higher than $i$, and hence faces of $\puff{i}{P}$. However there are two cases in which this is not possible.
\begin{lemma}\label{lem:facesofpuff12}
Let $P\subset \R^N$ be a full--dimensional polytope.
\begin{itemize}
    \item{$i=1$:} the flat faces in the boundary of $\puff{1}{P}$ are exactly the faces of dimension $k<N-1$; \\
    \item{$i=2$:} the flat faces in the boundary of $\puff{2}{P}$ are exactly the faces of dimension $k<N-2.$
\end{itemize}
    \begin{proof}
     The first point is clear because the facets (faces of dimension $N-1$) are the only zeroes of multiplicity one. The second point follows from the so called ``diamond property'' of polytopes \cite{ziegler2012lectures}.
    \end{proof}
\end{lemma}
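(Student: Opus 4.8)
The plan is to analyze the multiplicity of the points of a $k$-face $F$ of $P$ as zeros of the derivative polynomial \eqref{eq:der_puff}, since by Definition~\ref{def:puffed} a face of $P$ lies in the boundary of $\puff{i}{P}$ exactly when its points remain zeros of that polynomial, and it is a \emph{flat} face precisely when the zero set is (locally) the union of the same affine hyperplanes as for $P$ itself. By Lemma~\ref{lem:faces_puffedP} every face of dimension $k < N-i$ is already known to be in the boundary, so the only thing to check is (a) that these low-dimensional faces really do remain \emph{flat}, not just contained in the boundary, and (b) that no face of dimension $k \geq N-i$ survives, for $i=1,2$.

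For $i=1$: a point $x$ interior to a facet of $P$ lies on exactly one of the hyperplanes $\{l_j = a_j\}$, so it is a simple zero of $p$ in \eqref{eq:polytope}, hence $\frac{\partial}{\partial w}\tilde p$ does not vanish there; this is the observation already recorded in the statement. A point interior to a $k$-face with $k<N-1$ lies on at least $N-k \geq 2$ of the hyperplanes, so it is a zero of multiplicity $\geq 2$ of $p$ and a zero of $\frac{\partial}{\partial w}\tilde p$; moreover near such a point the zero set of $\frac{\partial}{\partial w}\tilde p$ still decomposes as a union of (the duals of) these same hyperplanes, so the face is flat in $\puff{1}{P}$. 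Together with Lemma~\ref{lem:faces_puffedP} this pins down the flat faces of $\puff{1}{P}$ exactly as the faces of dimension $k<N-1$.

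For $i=2$: the new point, compared with Lemma~\ref{lem:faces_puffedP}, is that facets of dimension $N-1$ \emph{and} ridges of dimension $N-2$ both fail to be flat faces of $\puff{2}{P}$ — equivalently, every $(N-2)$-face is a zero of $\frac{\partial^2}{\partial w^2}\tilde p$ of multiplicity exactly the minimum, so the surface is smoothed there. This is where the diamond property of polytopes enters: an $(N-2)$-face of $P$ is contained in exactly two facets (this is precisely the statement that the interval between an $(N-2)$-face and $P$ in the face lattice is a diamond). Hence a point interior to an $(N-2)$-face is a zero of $p$ of multiplicity exactly $2$, so it is a \emph{simple} zero of $\frac{\partial}{\partial w}\tilde p$ and thus not a zero of $\frac{\partial^2}{\partial w^2}\tilde p$; therefore $(N-2)$-faces do not survive into the boundary of $\puff{2}{P}$ at all, a fortiori not as flat faces. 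Faces of dimension $k<N-2$ lie on $\geq N-k \geq 3$ hyperplanes, hence are zeros of $\frac{\partial^2}{\partial w^2}\tilde p$ and, by the same local-decomposition argument as before, remain flat; combined with Lemma~\ref{lem:faces_puffedP} this gives the claim for $i=2$.

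The main obstacle I anticipate is making precise the claim that near a point of a face lying on several of the hyperplanes the zero set of the $i$-th $w$-derivative is \emph{locally} a union of those same hyperplanes (so that the face is genuinely flat rather than merely contained in the boundary). This should follow from the fact that $\tilde p$ factors as a product of the distinct linear forms vanishing at the point times a nonvanishing factor, and that $\frac{\partial^i}{\partial w^i}$ of such a product, evaluated in a neighbourhood, vanishes on the union of those hyperplanes with the remaining factor smooth and nonzero — but the bookkeeping of which hyperplanes contribute and with what multiplicity, especially to conclude flatness rather than just containment, is the delicate part. For $i=2$ the diamond property is exactly what removes the ambiguity; for larger $i$ it genuinely fails, which is consistent with the remark preceding the lemma that $k$-faces with $k\geq N-i$ may reappear as faces of $\puff{i}{P}$.
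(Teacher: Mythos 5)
Your core argument is exactly the paper's: for $i=1$, interior points of facets are simple zeros of $p$, so $\tfrac{\partial}{\partial w}\tilde p$ does not vanish there, while faces of dimension $k<N-1$ give zeros of multiplicity $\geq 2$; for $i=2$, the diamond property gives that an $(N-2)$-face lies on exactly two facet hyperplanes, so its relative interior consists of zeros of $p$ of order exactly two, hence simple zeros of $\tfrac{\partial}{\partial w}\tilde p$ that do not lie in $Z_2$. This multiplicity bookkeeping, together with Lemma~\ref{lem:faces_puffedP}, is precisely the (very terse) proof in the paper, and that part of your write-up is correct.

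However, the step you flag as delicate is genuinely false as you state it: near a point of a $k$-face lying on $N-k>i$ of the hyperplanes, the zero set of $\tfrac{\partial^i}{\partial w^i}\tilde p$ is in general \emph{not} locally a union of those hyperplanes. For the square $P=[-1,1]^2$ one gets $\tfrac{\partial}{\partial w}\tilde p(x,y,1)=-2(x^2+y^2-2)$, so near the vertex $(1,1)$ the locus $Z_1$ is the circle of radius $\sqrt{2}$, not the union of the lines $x=1$ and $y=1$; for the cube $[-1,1]^3$, near an interior point of the edge $\{x=y=1\}$ the locus $Z_1$ is a smooth hypersurface tangent to $\{x+y=2\}$ along the edge, again not a union of the two facet planes, even though the edge \emph{is} a flat face of $\puff{1}{P}$. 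So flatness cannot be obtained by a local factorization of $Z_i$. It is also not needed: containment of the faces of dimension $k<N-i$ in the boundary is Lemma~\ref{lem:faces_puffedP}; such a face is then automatically contained in a face of the convex body $\puff{i}{P}$, since a supporting hyperplane at a relative interior point must contain the whole face; and the converse fact that every flat face of $\puff{i}{P}$ is a face of $P$ is what the paper defers to the remark following the lemma, citing \cite{renegarhyperbolic}, rather than to any local description of $Z_i$. With that substitution your argument matches the intended proof; the multiplicity computation (and the diamond property for $i=2$) is the real content.
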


\begin{remark}
    By \cite[Proposition~$24$]{renegarhyperbolic} we can deduce that the flat faces of a puffed polytope must be faces of the polytope itself. The remaining points in the boundary of $\puff{i}{P}$ are exposed points.
\end{remark}

Using this result we can deduce conditions for the strict convexity of the fiber body of a puffed polytope.
\begin{proposition}[Fiber $1$st puffed polytope]\label{prop:puff1}
    Let $P\subset \R^{n+m}$ be a full--dimensional polytope, $n\geq 1$, $m\geq 2$, and take any projection $\pi : \R^{n+m} \to \R^n$. The fiber puffed polytope $\Sigma_{\pi}\left( \puff{1}{P} \right)$ is strictly convex if and only if $m=2$.
    \begin{proof}
     By Lemma~\ref{lem:facesofpuff12}, the flat faces in the boundary of $\puff{1}{P}$ are the faces of $P$ of dimension $k<n+m-1$. Suppose first that $m>2$ and let $F$ be a $(n+m-2)$--face of $P$. Take a point $p$ in the relative interior of $F$ and let $x_p:=\pi(p)$. Then the dimension of $F\cap \pi^{-1}(x_p)$ is at least $m-2\geq 1$; we can also assume without loss of generality
     that
\begin{equation}\label{eq:dim}
    1\leq \dim \left( F\cap \pi^{-1}(x_p) \right) < n+m-2.
\end{equation}
Furthermore there is a whole neighborhood $U$ of $x_p$ such that condition \eqref{eq:dim} holds, so for every $x\in U$ the convex body $\left( \puff{1}{P} \right)_x$ is not strictly convex. By Proposition~\ref{prop:strictlyconvex} then $\Sp{\left( \puff{1}{P}\right) }$ is not strictly convex.
Suppose now that $m=2$ and fix a flat face $F$ of $\puff{1}{P}$. Its dimension is less or equal than $n$, so $\left( F\cap \pi^{-1}(x_p) \right)$ is either one point or a face of positive dimension. In the latter case $\dim \pi(F)\leq n-1$, i.e. it is a set of measure zero in $\pi \left( \puff{1}{P} \right)$. Because there are only finitely many flat faces, we can conclude that almost all the fibers are strictly convex and thus by Proposition~\ref{prop:strictlyconvex}, $\Sp{\left( \puff{1}{P}\right) }$ is strictly convex.
    \end{proof}
\end{proposition}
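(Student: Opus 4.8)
The approach is to combine Proposition~\ref{prop:strictlyconvex} with the description of the boundary of a first puffed polytope provided by Lemma~\ref{lem:facesofpuff12} and the remark following it. Write $Q:=\puff{1}{P}$ and let $F_1,\dots,F_r$ be the faces of $P$ of dimension at most $n+m-2$; by Lemma~\ref{lem:facesofpuff12} these are exactly the flat faces of $Q$, and every other boundary point of $Q$ is an exposed point. The first step is the reduction: for every $x$ in the full-measure set $\mathrm{int}\,\pi(Q)$ the fiber $K_x:=Q_x$ is a full-dimensional convex body in $W$, and it is strictly convex if and only if $\dim\bigl(F_j\cap\pi^{-1}(x)\bigr)=0$ for all $j$. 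For the nontrivial implication one lifts a boundary segment of $K_x$ to a boundary segment $\{x\}\times[a,b]$ of $Q$; its relative interior points cannot be exposed, so they lie on some $F_j$, and since $F_j=Q^{u_j}$ a short convexity computation then forces both $(x,a)$ and $(x,b)$, hence the whole segment, onto $F_j$. Putting $S_j:=\set{x\in V}{\dim\bigl(F_j\cap\pi^{-1}(x)\bigr)\ge 1}$, the fibers that fail to be strictly convex are, up to a Lebesgue-null set, exactly those over $S:=\bigcup_j S_j$.

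For $m=2$ every $F_j$ has dimension at most $n$. Writing $A_j:=\mathrm{aff}(F_j)$, the affine map $\pi|_{A_j}$ is either injective, in which case $F_j\cap\pi^{-1}(x)$ has at most one point for every $x$ and $S_j=\emptyset$, or it has a nontrivial kernel, in which case $\dim\pi(A_j)\le\dim A_j-1\le n-1$ and $S_j\subseteq\pi(F_j)\subseteq\pi(A_j)$ is Lebesgue-null in $V$. Hence $S$ is null, every fiber over $V\setminus S$ is strictly convex, and — the point being that this exceptional set does not depend on the direction — Proposition~\ref{prop:strictlyconvex} gives that $\Sp{Q}$ is strictly convex in every direction.

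For $m\ge 3$ I need to exhibit a single direction $u\in W$ in which a positive-measure set of fibers fails to be strictly convex. I would first produce a ridge $G$ of $P$ (a face of dimension $n+m-2$) with $\dim\pi(G)=n$: since the fibers of $\pi|_P$ over $\mathrm{int}\,\pi(P)$ are $m$-dimensional, each of them meets the boundary of $P$, hence $\pi(P)$ is the union of the projections of the facets of $P$ and at least one facet $F_0$ satisfies $\dim\pi(F_0)=n$; repeating this observation inside $F_0$ (whose fibers over interior points are $(m-1)$-dimensional) yields such a ridge $G$. For $x\in\mathrm{int}\,\pi(G)$ the affine space $\pi^{-1}(x)$ meets $\mathrm{relint}(G)$, whence $\dim\bigl(G\cap\pi^{-1}(x)\bigr)\ge(n+m-2)+m-(n+m)=m-2\ge 1$, so $G_x:=\set{y\in W}{(x,y)\in G}$ is a positive-dimensional flat face of $K_x$, and $\mathrm{int}\,\pi(G)$ has positive measure. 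Finally, to pin down $u$: the cone $N:=\set{w\in\R^{n+m}}{Q^w\supseteq G}$ is nonzero, and $N\not\subseteq V$, because $N\subseteq V$ would make $Q^w$ project into a proper face of the convex body $\pi(Q)$ for some nonzero $w=(w_V,0)\in N$, forcing $\dim\pi(G)\le n-1$. Choosing $w=(w_V,w_W)\in N$ with $w_W\ne 0$, the inequality $\langle w_V,x\rangle+\langle w_W,y'\rangle\le h_Q(w)$ for $(x,y')\in Q$, which is an equality on $G$, yields $G_x\subseteq(K_x)^{w_W}$ for every $x\in\pi(G)$; hence $K_x$ is not strictly convex in direction $u:=w_W$ for all $x$ in a set of positive measure, and Proposition~\ref{prop:strictlyconvex} concludes.

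The only genuinely delicate point is this last step in the case $m\ge 3$: Proposition~\ref{prop:strictlyconvex} is stated one direction at a time, so it does not suffice to know that many fibers fail strict convexity in some (a priori varying) direction — one must find a common direction witnessing failure on a set of positive measure. Arranging this is exactly the role of the ridge $G$ with full-dimensional projection together with the normal cone of $Q$ along $G$; the rest of the argument is dimension counting on faces and elementary convexity.
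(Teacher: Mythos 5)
Your argument is correct and runs on the same skeleton as the paper's proof: the boundary description of $Q:=\puff{1}{P}$ from Lemma~\ref{lem:facesofpuff12} (plus the remark that the rest of $\partial Q$ consists of exposed points), combined with the fiberwise criterion of Proposition~\ref{prop:strictlyconvex} and dimension counting on faces; your $m=2$ case is essentially identical to the paper's. Where you genuinely add something is the case $m\geq 3$. The paper picks an arbitrary $(n+m-2)$--face $F$, declares ``without loss of generality'' that \eqref{eq:dim} holds on a whole neighborhood of $x_p$ (which tacitly requires $\dim\pi(F)=n$ --- not automatic, e.g.\ ridges parallel to $W$ project to points), and then invokes Proposition~\ref{prop:strictlyconvex} even though that proposition is stated one direction at a time, without ever exhibiting a single direction $u$ in which a positive-measure family of fibers fails to be strictly convex. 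You identified exactly these two loose ends and closed them: the inductive facet-then-ridge argument produces a ridge $G$ with $\dim\pi(G)=n$, and the normal-cone argument (if $N\subseteq V$ then $\pi(G)$ sits in a proper face of $\pi(Q)$, contradiction) yields $w=(w_V,w_W)$ with $w_W\neq 0$ and $G_x\subseteq (K_x)^{w_W}$ for all $x$, so the common direction $u=w_W$ makes Proposition~\ref{prop:strictlyconvex} genuinely applicable. Likewise your observation in the $m=2$ case that the exceptional null set of fibers does not depend on the direction is exactly what is needed to pass from the direction-by-direction statement to strict convexity in every direction. One micro-detail you may want to make explicit: when you push a boundary segment of a fiber into the flat part of $\partial Q$, the relative interior of the segment is covered by the finitely many closed convex sets $F_j$, so some single $F_j$ already contains a subsegment (and then a supporting functional along it forces the conclusion); also, the flat faces $F_j$ need only be \emph{contained} in exposed faces $Q^{u_j}$, which suffices for that step. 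With these phrasings tightened, your proof is a more careful version of, rather than an alternative to, the paper's argument.
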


A similar result holds for the second fiber puffed polytope, using Lemma~\ref{lem:facesofpuff12}.

\begin{proposition}[Fiber $2$nd puffed polytope]\label{prop:puff2}
    Let $P\subset \R^{n+m}$ be a full--dimensional polytope, $n\geq 1$, $m\geq 2$, and take any projection $\pi : \R^{n+m} \to \R^n$. The fiber puffed polytope $\Sigma_{\pi}\left( \puff{2}{P} \right)$ is strictly convex if and only if $m\leq 3$, i.e. $m=2$ or $3$.
\end{proposition}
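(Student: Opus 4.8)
The plan is to mirror the proof of Proposition~\ref{prop:puff1} almost verbatim, replacing the combinatorial input by the $i=2$ case of Lemma~\ref{lem:facesofpuff12}. By that lemma, the flat faces in the boundary of $\puff{2}{P}$ are exactly the faces of $P$ of dimension $k < n+m-2$; all other boundary points of $\puff{2}{P}$ are exposed (hence the body is strictly convex at those points). So the strict convexity of $\Sigma_\pi(\puff{2}{P})$ is governed, via Proposition~\ref{prop:strictlyconvex}, entirely by how the finitely many flat faces $F$ of $\puff{2}{P}$ (i.e. the $\leq (n+m-3)$-dimensional faces of $P$) sit over $\pi(K)$: the fiber body fails to be strictly convex in some direction if and only if a positive-measure set of fibers $(\puff{2}{P})_x$ fails to be strictly convex, and a fiber fails precisely when it meets some flat face $F$ in a positive-dimensional (but not full-dimensional) convex set.

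First I would do the ``only if'' direction, i.e. show that if $m \geq 4$ then $\Sigma_\pi(\puff{2}{P})$ is not strictly convex. Pick an $(n+m-3)$-face $F$ of $P$, choose a point $p$ in its relative interior, set $x_p := \pi(p)$; since $\pi$ restricted to the $(n+m-3)$-dimensional affine hull of $F$ has image of dimension at most $n$, the fiber $F \cap \pi^{-1}(x_p)$ has dimension at least $(n+m-3) - n = m-3 \geq 1$, and (after a generic perturbation of $F$ within its dimension class, or of $p$) we may also ensure it is not all of $\pi^{-1}(x_p) \cap \puff{2}{P}$, i.e.
\begin{equation}
    1 \leq \dim\bigl(F \cap \pi^{-1}(x_p)\bigr) < n+m-3 \leq \dim\bigl((\puff{2}{P})_{x_p}\bigr) + n,
\end{equation}
so that $(\puff{2}{P})_{x_p}$ has a flat face of positive dimension, hence is not strictly convex. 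This persists on a whole neighborhood $U$ of $x_p$ (the dimension of the fiber of a flat face is lower semicontinuous in the relevant sense, and $\pi(F)$ contains a neighborhood of $x_p$), so a positive-measure set of fibers is not strictly convex, and Proposition~\ref{prop:strictlyconvex} gives the conclusion.

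For the ``if'' direction, assume $m \leq 3$ and fix any flat face $F$ of $\puff{2}{P}$, so $\dim F \leq n+m-3 \leq n$. Then for any $x$, the slice $F \cap \pi^{-1}(x)$ is either empty, a single point, or a positive-dimensional face; in the last case $\dim \pi(F) \leq n-1$, so the set of $x$ over which this happens has measure zero in $\pi(\puff{2}{P})$. Since there are only finitely many flat faces $F$, the union of these bad sets still has measure zero, so almost every fiber $(\puff{2}{P})_x$ meets the flat part of the boundary only in isolated points and is therefore strictly convex; Proposition~\ref{prop:strictlyconvex} then yields that $\Sigma_\pi(\puff{2}{P})$ is strictly convex. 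I expect the main obstacle to be the ``only if'' step: one must argue carefully that the positive-dimensionality of $F \cap \pi^{-1}(x)$ is genuinely inherited by the fiber $(\puff{2}{P})_x$ as a \emph{proper} flat face (not the whole fiber, and not collapsed by how $\pi$ interacts with $\mathrm{aff}(F)$), and that this is an open condition on $x$ — exactly the point where $m \geq 4$ versus $m = 3$ becomes decisive, since for $m = 3$ the generic slice of an $(n+m-3) = n$-dimensional face $F$ is zero-dimensional and the argument breaks down, consistently with the statement.
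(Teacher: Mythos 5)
Your proposal is correct and takes essentially the same route as the paper, which simply reuses the strategy of Proposition~\ref{prop:puff1} with the $i=2$ case of Lemma~\ref{lem:facesofpuff12}: for $m\geq 4$ an $(n+m-3)$-dimensional flat face yields a positive-measure set of non-strictly-convex fibers, while for $m\leq 3$ the flat faces have dimension at most $n$, so positive-dimensional slices occur only over a measure-zero set and Proposition~\ref{prop:strictlyconvex} concludes. The only wording to adjust is ``generic perturbation of $F$'': a face of the fixed polytope cannot be perturbed, one should instead choose the face and the point $p$ suitably, which is the same terse ``without loss of generality'' step that the paper itself uses.
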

\begin{proof}
    We can use the previous strategy again. If $m>3$, there always exists a face of $\puff{2}{P}$ of dimension $n+m-3$ whose non--empty intersection with fibers of $\pi$ has dimension at least $1$ and strictly less than $n+m-3$. So in this case we get a non strictly convex fiber body. On the other hand, when $m=2$ or $3$ the intersection of the fibers and the flat faces has positive dimension only on a measure zero subset of $\R^n$, hence almost all the fibers are strictly convex and the thesis follows.
\end{proof}

Can we generalize this result for the $i$-th puffed polytope? In general no, and the reason is precisely that a $k$-face may be contained in more than $(n+m-k)$ facets, when $k<n+m-2$. 
The polytopes $P$ for which this does not happen are called \emph{simple polytopes}. Thus with the same proof as above we obtain the following.
\begin{proposition}[Fiber $i$-th puffed simple polytope]\label{prop:puffn}
    Let $P\subset \R^{n+m}$ be a full--dimensional simple polytope, $n\geq 1$, $m\geq 2$, and take any projection $\pi : \R^{n+m} \to \R^n$. The fiber puffed polytope $\Sigma_{\pi}\left( \puff{i}{P} \right)$ is strictly convex if and only if $m\leq i+1$.
\end{proposition}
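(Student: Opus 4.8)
The plan is to follow the proof of Proposition~\ref{prop:puff1} almost verbatim; the only genuinely new ingredient is the analogue, for simple polytopes, of Lemma~\ref{lem:facesofpuff12}. So I would first establish: \emph{if $P\subset\R^{n+m}$ is a full--dimensional simple polytope, the flat faces in the boundary of $\puff{i}{P}$ are exactly the faces of $P$ of dimension $k$ with $1\le k<n+m-i$.} By definition of simplicity, every $k$--face of $P$ is the intersection of \emph{exactly} $n+m-k$ facets, so its relative interior consists of zeros of~\eqref{eq:polytope} of multiplicity precisely $n+m-k$; the multiplicity count already used in Lemma~\ref{lem:faces_puffedP} then shows that such a face stays in the zero locus of~\eqref{eq:der_puff} exactly when $n+m-k>i$, and a short computation with $\partial/\partial w$ (each of the $n+m-k$ vanishing linear factors must be differentiated exactly once for a term to survive on the face, so $n+m-k\le i$ leaves a nonzero term) shows it leaves that zero locus once $n+m-k\le i$. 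Combined with the remark after Lemma~\ref{lem:facesofpuff12} — by \cite[Proposition~$24$]{renegarhyperbolic} every flat face of $\puff{i}{P}$ is a face of $P$, and every other boundary point of $\puff{i}{P}$ is an exposed point — this yields the description, the bound $k\ge1$ merely expressing that a flat face has positive dimension.

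Granting this, I would run the two implications as in Proposition~\ref{prop:puff1}. If $m>i+1$: since $P$ is full--dimensional and $\pi$ is surjective, one can descend through facets keeping the projection of full dimension at each step, and thus produce an $(n+1)$--face $F$ of $P$ with $\dim\pi(F)=n$ (concretely, at any simple vertex the $n+m$ edge directions form a basis of $\R^{n+m}$, so $n$ of their $\pi$--images are independent, and the corresponding $n$ edges together with one more edge span such an $F$). Since $n+1<n+m-i$, the description above makes $F$ a flat face of $\puff{i}{P}$, hence $F$ lies in a supporting hyperplane $H$; set $F'=H\cap\puff{i}{P}=\left(\puff{i}{P}\right)^{u_0}$ with $u_0=(a,b)\in V\oplus W$. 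For every $x$ in the relative interior of $\pi(F)$ — a set of positive $n$--dimensional measure — the fibre $F\cap\pi^{-1}(x)$ is a segment, and arguing as in the proof of Proposition~\ref{prop:supportaverage} the $W$--part of $F'\cap\pi^{-1}(x)$ equals $\left(\puff{i}{P}\right)_x^{\,b}$, which therefore contains that segment. So $\left(\puff{i}{P}\right)_x$ is not strictly convex in the direction $b$ for all such $x$, and Proposition~\ref{prop:strictlyconvex} gives that $\Sigma_\pi\left(\puff{i}{P}\right)$ is not strictly convex. Conversely, if $m\le i+1$ then every flat face $F$ of $\puff{i}{P}$ has $\dim F<n+m-i\le n+1$, i.e.\ $\dim F\le n$, so either $\dim\pi(F)<n$ and $\pi(F)$ is Lebesgue--null, or $\dim\pi(F)=n=\dim F$ and the generic fibre of $\pi|_F$ is $0$--dimensional; either way $F$ meets $\pi^{-1}(x)$ in at most a point for $x$ outside a null set. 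As there are finitely many flat faces, for almost every $x\in\pi\left(\puff{i}{P}\right)$ \emph{every} flat face meets $\pi^{-1}(x)$ in at most a point; for such $x$, any segment in $\partial\left(\puff{i}{P}\right)_x$ would lift to a segment in $\partial\puff{i}{P}$ whose relative interior consists of non--extreme, hence non--exposed, points and would thus be covered by the finitely many point--slices of the flat faces — impossible. So almost every fibre is strictly convex and Proposition~\ref{prop:strictlyconvex} gives that $\Sigma_\pi\left(\puff{i}{P}\right)$ is strictly convex.

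The crux is the description of the flat faces of $\puff{i}{P}$, specifically the implication ``$k\ge n+m-i\Rightarrow$ the $k$--face is not flat'': this is exactly where simplicity is indispensable (for a non--simple $P$ a low--dimensional face can lie on many facets and survive the differentiation), and it requires marrying the combinatorics of simple polytopes with the differential computation behind~\eqref{eq:der_puff} together with the structural input of \cite[Proposition~$24$]{renegarhyperbolic}. Everything else — the descent producing $F$, the support--function identity for $\left(\puff{i}{P}\right)_x^{\,b}$, and the lifting of boundary segments — is routine once the flat faces are understood.
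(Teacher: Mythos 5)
Your proposal is correct and follows essentially the same route as the paper, whose proof of this proposition is literally ``the same proof as above'', i.e.\ the argument of Propositions~\ref{prop:puff1} and~\ref{prop:puff2} rerun with the flat--face description for simple polytopes and Proposition~\ref{prop:strictlyconvex}; your additional details (the $(n+1)$--face at a simple vertex with $\dim\pi(F)=n$ giving a positive--measure set of non--strictly--convex fibers, and the exposed--point argument for the converse) only make explicit what the paper leaves implicit. One small step to tighten in your key lemma: to rule out cancellation among the surviving terms of $\partial^i\tilde p/\partial w^i$ at a relative interior point of a $k$--face with $n+m-k\le i$, note that each such term equals $\prod_{j\in S}(-a_j)\prod_{j\notin S}\left(l_j(x)-a_j\right)$ with every factor negative (the origin is interior, so $a_j>0$, and $l_j(x)-a_j<0$ for the non--vanishing facets), hence all terms have the same sign $(-1)^d$ and the sum cannot vanish --- equivalently, this is Renegar's multiplicity--drop statement for derivative relaxations.
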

In the case where $P$ is not simple, one has to take into account the number of facets in which each face of dimension $k \geq n+m-i$ is contained, in order to understand if they are or not part of the boundary of $\puff{i}{P}$.

\subsection{A case study: the elliptope}\label{sec:elliptope}
Take the tetrahedron $\mathcal{T}$ in $\R^3$ realized as
\begin{equation}
    \conv\{ (1,1,1), (1,-1,-1), (-1,1,-1), (-1,-1,1) \}.
\end{equation}
The first puffed tetrahedron (for the rest of the subsection we will omit the word ``first'') is the semialgebraic convex body called the \emph{elliptope} which is the set of points $(x,y,z)\in[-1,1]^3$ such that $x^2+y^2+z^2-2xyz\leq 1$.
Let $\pi$ be the projection on the first coordinate: $\pi(x,y,z)=x$. The fibers of the elliptope at $x$ for $x\in(-1,1)$ are the ellipses defined by 
\begin{equation}
    \mathcal{E}_x=\set{(y,z)}{\left(\frac{y-xz}{\sqrt{1-x^2}}\right)^2+z^2\leq 1}.
\end{equation}
Introducing the matrix 
\begin{equation}
    M_x:=
    \begin{pmatrix}   
    \frac{1}{\sqrt{1-x^2}}  & \frac{-x}{\sqrt{1-x^2}} \\
    0 & 1  \end{pmatrix}
\end{equation}
it turns out that $\mathcal{E}_x = \set{(y,z)}{\|M_x(y,z)\|^2\leq 1} = (M_x)^{-1}B^2$, where $B^2$ is the unit $2$--disc. We obtain 
\begin{equation}
    h_{\mathcal{E}_x}(u,v)=h_{B^2}\left((M_x)^{-T}(u,v)\right)=\left\|(M_x)^{-T}(u,v)\right\| = \sqrt{u^2+v^2+2 x u v}.
\end{equation}
By~\eqref{eq:supportaverage} we need to compute the integral of $h_{\mathcal{E}_x}$ between $x=-1$ and $x=1$ to obtain the support function of the fiber body of the elliptope. We get
\begin{equation}
    h_{\Sigma_{\pi}\mathcal{E}}(u,v)= \frac{1}{3 u v}\left( |u+v|^3-|u-v|^3 \right).
\end{equation}
Hence the fiber body is semialgebraic and its algebraic boundary is the zero set of the four parabolas $3y^2 + 8z - 16$, $3y^2 - 8z - 16$, $8y + 3z^2 - 16$, $8y - 3z^2 + 16$, displayed in Figure~\ref{fig:fiber_ell}.
\begin{figure}
    \begin{subfigure}{0.6\textwidth}
        \centering
        \includegraphics[width = 0.9\textwidth]{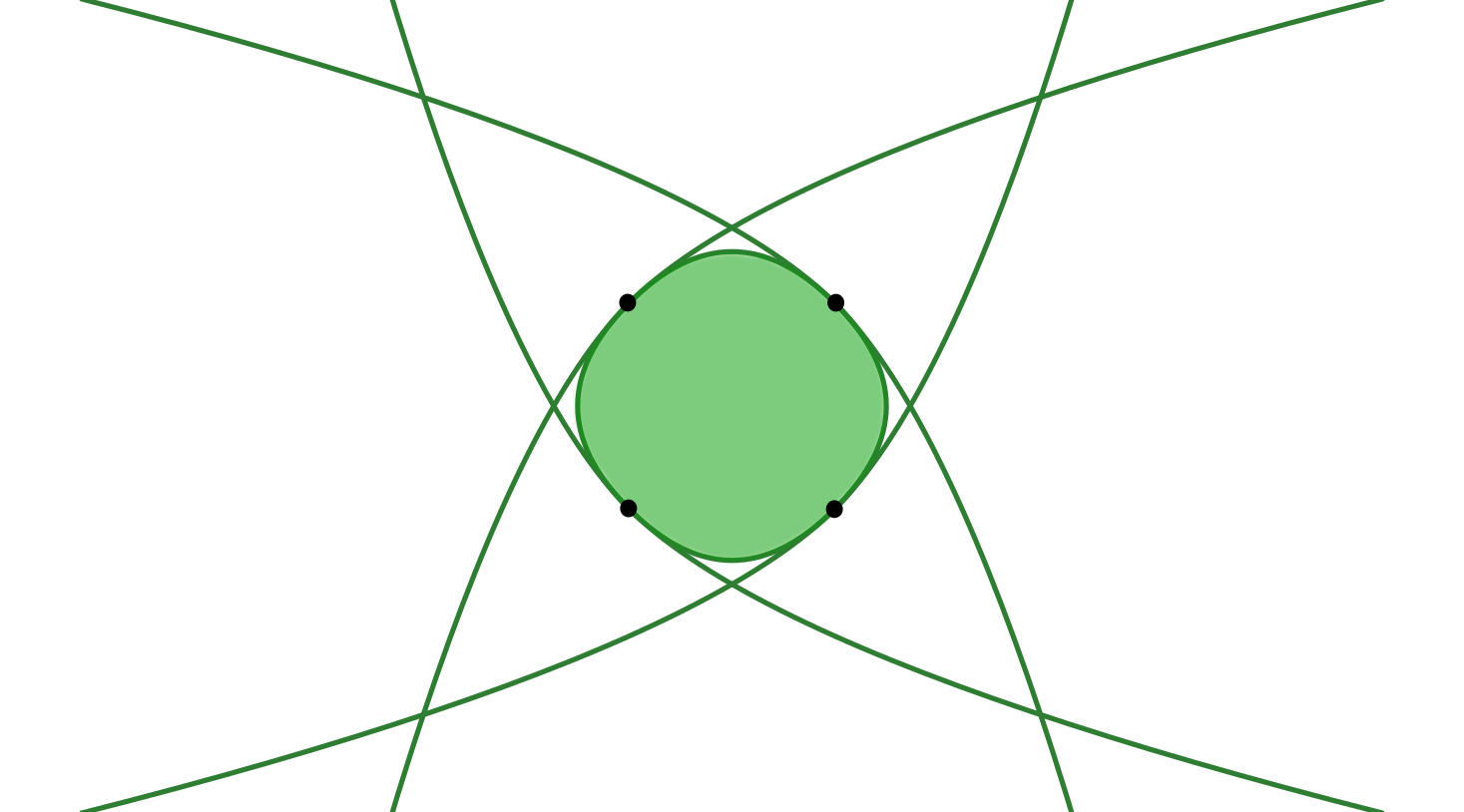}
        \caption{}
        \label{fig:fiber_ell}
    \end{subfigure}
    \begin{subfigure}{0.39\textwidth}
        \centering
        \includegraphics[width=0.6\textwidth]{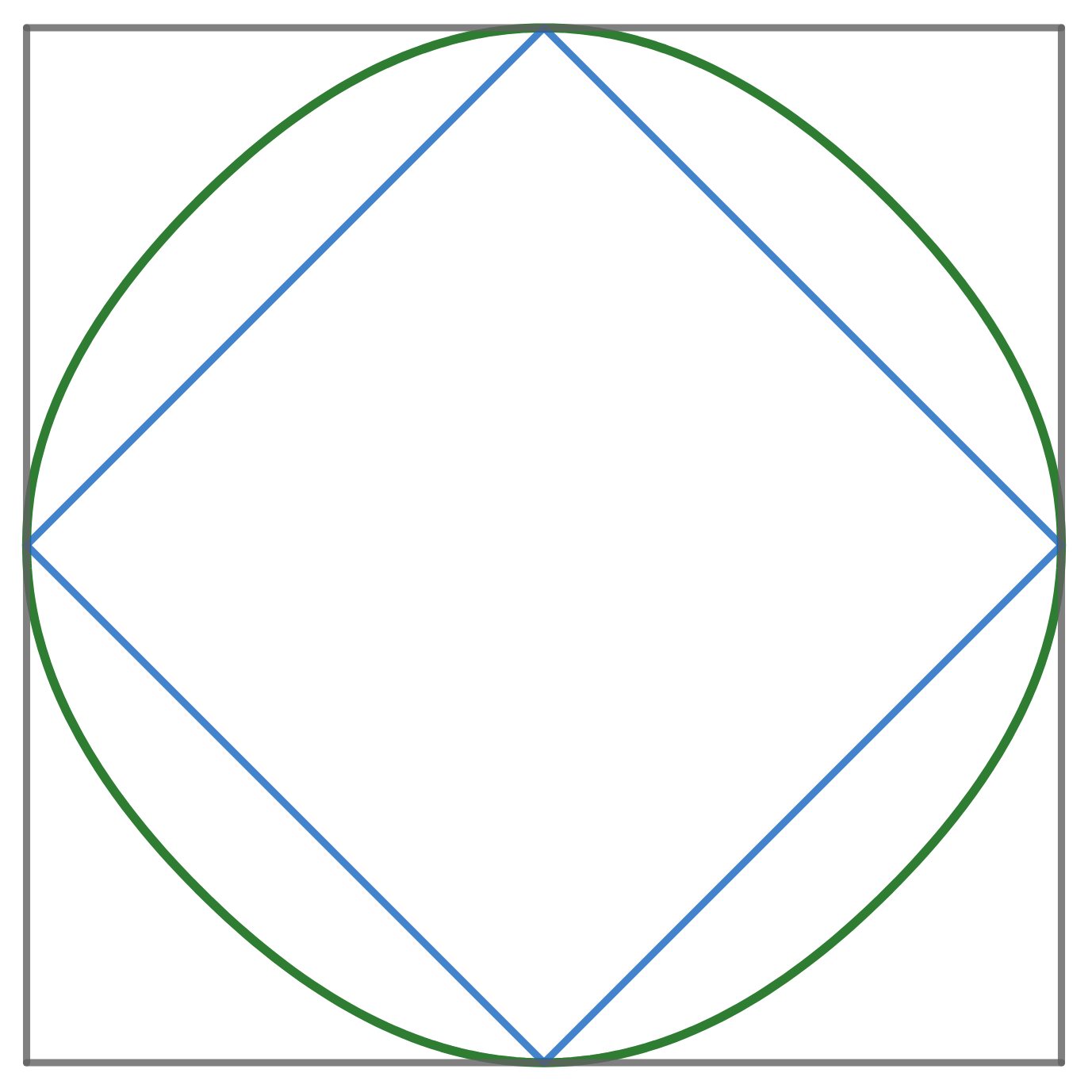}
        \caption{}
        \label{fig:3_fiberbodies}
    \end{subfigure}
    \caption{Left: the four green parabolas meet in the four black points on the boundary of the fiber elliptope, that lie on the diagonals $y=z$ and $y=-z$ .\\ Right: sandwiched fiber bodies. The blue square is the fiber tetrahedron $\Sigma_{\pi}\mathcal{T}$; the green convex body is the fiber elliptope $\Sigma_{\pi}\mathcal{E}$; the grey square is the fiber cube $\Sigma_{\pi}\left( [-1,1]^3\right)$.}
\end{figure}

As anticipated in Proposition~\ref{prop:puff1} the fiber elliptope is strictly convex.
Notice that the elliptope is naturally sandwiched between two polytopes: the tetrahedron $\mathcal{T}$ and the cube $[-1,1]^3$.
Therefore, as a natural consequence of the definition, the same chain of inclusions works also for their fiber bodies:
\begin{equation}
    \Sigma_{\pi}\mathcal{T} \subset \Sigma_{\pi}\mathcal{E} \subset \Sigma_{\pi}\left([-1,1]^3\right)
\end{equation}
as shown in Figure~\ref{fig:3_fiberbodies}.

\begin{remark}
    From this example it is clear that the operation of ``taking the fiber body'' does not commute with the operation of ``taking the puffed polytope''. In fact the puffed polytope of the blue square in Figure~\ref{fig:3_fiberbodies} is not the green convex body bounded by the four parabolas: it is the disc $y^2 + z^2 \leq 4$.
\end{remark}

\section{Curved convex bodies}\label{sec:Smooth} 
In this section we are interested in the case where the boundary of the convex body $K$ is highly regular. We prove Theorem~\ref{thm:supofsmooth} which is a formula to compute support function of the fiber body directly in terms of the support function of $K$, without having to compute those of the fibers.
\begin{definition}
    We say that a convex body $K$ is \emph{curved} if the following two conditions are satisfied: the support function $h_K$ is $C^2$ and the gradient $\nabla h_K$ restricted to the sphere is a $C^1$ diffeomorphism with the boundary of $K$.
\end{definition}

In that case $K$ is full--dimensional and its boundary is a $C^2$ hypersurface. Moreover we have the following.
\begin{lemma}
    Let $K\subset \R^{n+m}$ be a curved convex body and let $v\in S^{n+m-1}$. Then the differential $\dd_v\nabla h_K$ is a symmetric positive definite automorphism of $v^\perp$.
    \begin{proof}
     This is proved in~\cite[p.$116$]{bible}, where curved convex bodies are said to be ``of class $C^2_+$'' and $\dd_v\nabla h_K$ is denoted by $\overline{W}_v$. 
    \end{proof}
\end{lemma}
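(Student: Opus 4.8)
The plan is to deduce the three asserted properties --- symmetric, positive definite, automorphism of $v^\perp$ --- from three structural facts about $h_K$: it is convex, it is positively homogeneous of degree one, and (by the curvedness hypothesis) $\nabla h_K$ restricts to a $C^1$ diffeomorphism of $S^{n+m-1}$ onto $\partial K$.

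First I would record that, since $K$ is curved, $h_K$ is $C^2$ away from the origin, so $A_v := \dd_v\nabla h_K$ is a well-defined endomorphism of $\R^{n+m}$, namely the Hessian of $h_K$ at $v$. By Schwarz's theorem it is symmetric, and because $h_K$ is convex its Hessian is everywhere positive semidefinite; in particular $A_v$ is symmetric and positive semidefinite as an operator on all of $\R^{n+m}$.

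Next I would cut the problem down to $v^\perp$. Homogeneity of degree one of $h_K$ gives $\nabla h_K(tv) = \nabla h_K(v)$ for $t>0$; differentiating in $t$ at $t=1$ yields $A_v(v) = 0$, so $v\in\ker A_v$. Using symmetry, $\langle A_v w, v\rangle = \langle w, A_v v\rangle = 0$ for every $w$, hence $\operatorname{im} A_v \subseteq v^\perp$. Therefore $A_v$ restricts to a symmetric positive semidefinite endomorphism $A_v|_{v^\perp}$ of $v^\perp$, and it only remains to see that it is invertible.

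Finally, invertibility is where the curvedness assumption enters. The boundary point $\nabla h_K(v)$ is the point of $K$ with outer unit normal $v$, so its tangent space is $T_{\nabla h_K(v)}\partial K = v^\perp$. By the chain rule the differential at $v$ of the restricted map $\nabla h_K|_{S^{n+m-1}}\colon S^{n+m-1}\to\partial K$ is precisely $A_v|_{T_v S^{n+m-1}} = A_v|_{v^\perp}$; since this map is a $C^1$ diffeomorphism, that differential is a linear isomorphism of $v^\perp$. A positive semidefinite symmetric operator that is bijective is positive definite, which completes the argument. The only slightly delicate point is this last identification --- matching the intrinsic differential of the restricted Gauss-type map with the restriction of the ambient Hessian $A_v$, and identifying $T_{\nabla h_K(v)}\partial K$ with $v^\perp$; everything else is routine, and indeed the statement is exactly the content of \cite[p.~$116$]{bible}.
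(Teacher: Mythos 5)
Your proof is correct, but it takes a different route from the paper: the paper does not argue at all, it simply cites Schneider (\cite[p.~116]{bible}), where the operator $\dd_v\nabla h_K$ is the reverse Weingarten map $\overline{W}_v$ and the statement is part of the standard theory of bodies of class $C^2_+$. Your argument is instead self-contained and, importantly, works directly from the paper's own (support-function-based) definition of ``curved'': symmetry from Schwarz, positive semidefiniteness from convexity of $h_K$, the Euler relation $\dd_v\nabla h_K(v)=0$ from degree-zero homogeneity of $\nabla h_K$ (which with symmetry gives invariance of $v^\perp$), and invertibility on $v^\perp$ from the hypothesis that $\nabla h_K|_{S^{n+m-1}}$ is a $C^1$ diffeomorphism, since its differential at $v$ is exactly $\dd_v\nabla h_K|_{v^\perp}$. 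That is a genuine gain: citing Schneider strictly requires matching the paper's definition with his boundary-based $C^2_+$ condition, whereas your derivation avoids that translation. One small streamlining: the step you flag as delicate, identifying $T_{\nabla h_K(v)}\partial K$ with $v^\perp$, can be bypassed entirely --- injectivity of the differential of the restricted map, together with the already established inclusion $\operatorname{im}\bigl(\dd_v\nabla h_K\bigr)\subseteq v^\perp$, makes $\dd_v\nabla h_K|_{v^\perp}$ an injective endomorphism of the finite-dimensional space $v^\perp$, hence an automorphism, and a bijective symmetric positive semidefinite operator is positive definite.
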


 The following gives an expression for the face of the fiber body. This is to be compared with the case of polytopes which is given in~\cite[Lemma~$11$]{esterovkhovanskii}.

\begin{lemma}\label{lemma:supofsmooth}
If $K$ is a curved convex body and $u\in W$ with $\| u \|=1$, then 
\begin{equation}
	\nabla h_{\Sigma_\pi K}(u)=\int_V \nabla h_K(u+\xi)\cdot J_{\psi_u}(\xi)\ \dd\xi
\end{equation}
where $\psi_u:V\to V$ is given by $\psi_u(\xi)=\left(\pi\circ\nabla h_K\right)(u+\xi)$ and $J_{\psi_u}(\xi)$ denotes its Jacobian (i.e. the determinant of its differential) at the point $\xi$.
\begin{proof}
     From~\eqref{eq:repofface} we have that $\nabla h_{\Sp{K}}(u)=\int_{\pi(K)} \gamma_u(x) \dd x$, where $\gamma_u(x)=\nabla h_{K_x}(u).$ Assume that $x=\psi_u(\xi)$ is a change of variables. We get $\gamma_u(x)=(\gamma_u\circ\pi\circ\nabla h_K) (u+\xi)=\nabla h_K(u+\xi)$ and the result follows. 
     
     It remains to prove that it is indeed a change of variables. Note that $\nabla h_K(u+\xi)=\nabla h_K (v)$ where $v=\tfrac{u+\xi}{\|u+\xi\|}\in S^{n+m-1}$. The differential of the map $\xi\mapsto v$ maps $V$ to $\left(V+\R u\right)\cap v^\perp$. Moreover $\nabla h_K$ restricted to the sphere is a $C^1$ diffeomorphism by assumption. Thus it only remains to prove that its differential $\dd_v\nabla h_K$ sends $\left(V+\R u\right)\cap v^\perp$ to a subspace that does not intersect $\ker \left(\restr{\pi}{v^\perp}\right)$. To see this, note that $\ker \left(\restr{\pi}{v^\perp}\right)^\perp=\left(V+\R u\right)\cap v^\perp$. Moreover, by the previous lemma, we have that $\langle w, \dd_v\nabla h_K \cdot w\rangle =0$ if and only if $w=0$. Thus if $w\in\ker \left(\restr{\pi}{v^\perp}\right)^\perp$ and $w\neq 0$, then $\pi \left(\dd_v\nabla h_K \cdot w\right)\neq 0$. Putting everything together, this proves that $\dd_\xi \psi_u$ has no kernel which is what we wanted.
\end{proof}
\end{lemma}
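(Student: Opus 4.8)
The plan is to pass from the fibre body to an integral over the fibres, identify the integrand after the substitution $x=\psi_u(\xi)$, and then justify that this substitution is a genuine change of variables. First, since $K$ is curved its support function is $C^1$, so $K$ --- and hence every fibre $K_x$ --- is strictly convex, in particular in the direction $u$. By Proposition~\ref{prop:strictlyconvex}, $\Sp{K}$ is strictly convex in direction $u$, so $\left(\Sp{K}\right)^u$ is a single point, equal to $\nabla h_{\Sp{K}}(u)$ by Proposition~\ref{prop:propertieshK}--$(iii)$. By Corollary~\ref{cor:sectionrepdirection} this point is $\int_{\pi(K)}\gamma_u(x)\,\dd x$, where $\gamma_u(x)$ is the unique point of $K_x^u$; strict convexity of $K_x$ makes $h_{K_x}$ differentiable at $u$, so $\gamma_u(x)=\nabla h_{K_x}(u)$.

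Next I would identify the integrand. Fix $\xi\in V$ and put $p:=\nabla h_K(u+\xi)\in K$, the unique maximiser of $\langle u+\xi,\cdot\rangle$ on $K$; write $p=(x,y)$ with $x=\pi(p)\in V$ and $y\in W$. For $y'\in K_x$ we have $(x,y')\in K$, and $\langle u+\xi,(x,y')\rangle=\langle \xi,x\rangle+\langle u,y'\rangle$, so maximality of $p$ forces $\langle u,y'\rangle\le\langle u,y\rangle$; thus $y$ maximises $\langle u,\cdot\rangle$ on $K_x$, i.e. $y=\gamma_u(x)$, and simultaneously $x=\pi\big(\nabla h_K(u+\xi)\big)=\psi_u(\xi)$. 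Hence $\gamma_u(\psi_u(\xi))$ is the $W$-component of $\nabla h_K(u+\xi)$, and substituting $x=\psi_u(\xi)$ in $\int_{\pi(K)}\gamma_u(x)\,\dd x$ gives the stated formula (with $\nabla h_K(u+\xi)$ read via its component in $W$).

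The real work is to show that $\psi_u\colon V\to V$ is a diffeomorphism onto the interior of $\pi(K)$. Using $0$-homogeneity of $\nabla h_K$, write $\psi_u(\xi)=\pi\big(\nabla h_K(v)\big)$ with $v=\tfrac{u+\xi}{\|u+\xi\|}\in S^{n+m-1}$; by hypothesis $\nabla h_K|_{S^{n+m-1}}$ is a $C^1$ diffeomorphism onto $\partial K$ with inverse the Gauss map. The differential of $\xi\mapsto v$ is an isomorphism of $V$ onto the $n$-dimensional subspace $(V+\R u)\cap v^\perp$ of $v^\perp$ (note $v\in V+\R u$, so this intersection has dimension $n$), and inside $v^\perp$ this subspace is exactly the orthogonal complement of $\ker\big(\restr{\pi}{v^\perp}\big)=v^\perp\cap W$. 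Since $\dd_v\nabla h_K$ is positive definite on $v^\perp$ by the preceding lemma, any $0\neq w\in(V+\R u)\cap v^\perp$ satisfies $\langle w,\dd_v\nabla h_K\, w\rangle>0$, hence $\dd_v\nabla h_K\, w\notin v^\perp\cap W$, hence $\pi\big(\dd_v\nabla h_K\, w\big)\neq0$; thus $\dd_\xi\psi_u$ has trivial kernel and is invertible.

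Global bijectivity then follows along routine lines. Injectivity: if $\psi_u(\xi_1)=\psi_u(\xi_2)$ then $\nabla h_K(u+\xi_1)$ and $\nabla h_K(u+\xi_2)$ have the same $\pi$-image and, by the second paragraph, the same $W$-component $\gamma_u$ of that image, so they coincide, whence $v(\xi_1)=v(\xi_2)$ by injectivity of the Gauss map, and comparing $W$-components gives $\xi_1=\xi_2$. Surjectivity onto the interior of $\pi(K)$: for such an $x$ the fibre $K_x$ is a smooth strictly convex body and the outer normal of $K$ at $(x,\gamma_u(x))$ has nonzero $W$-component, which is an outer normal of $K_x$ at $\gamma_u(x)$ and therefore a positive multiple of $u$; hence this normal lies in the cone $\{t(u+\xi):t>0,\ \xi\in V\}$ and equals $v(\xi)$ for some $\xi$, so $\psi_u(\xi)=x$. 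I expect the invertibility of $\dd_\xi\psi_u$ --- the matching of the positive definiteness of $\dd_v\nabla h_K$ with the orthogonal splitting of $v^\perp$ determined by $\pi$ --- to be the main obstacle, the rest being bookkeeping with the earlier results on faces and sections.
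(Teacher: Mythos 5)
Your proposal is correct and follows essentially the same route as the paper: express $\nabla h_{\Sigma_\pi K}(u)$ as the integral of the sections $\gamma_u(x)=\nabla h_{K_x}(u)$, identify $\gamma_u(\psi_u(\xi))$ with the $W$-component of $\nabla h_K(u+\xi)$, and show $\dd_\xi\psi_u$ is nonsingular by combining the identification of $\ker\big(\pi|_{v^\perp}\big)^\perp$ with $(V+\R u)\cap v^\perp$ and the positive definiteness of $\dd_v\nabla h_K$. Your additional paragraphs on global injectivity and surjectivity onto the interior of $\pi(K)$ supply details that the paper's proof leaves implicit ("assume that $x=\psi_u(\xi)$ is a change of variables"), and they are sound, modulo the small asserted fact that the fibers over interior points inherit smoothness from $K$ so that the outer normal of $K_x$ at $\gamma_u(x)$ is indeed a positive multiple of $u$.
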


As a direct consequence we derive a formula for the support function. 
\begin{theorem}\label{thm:supofsmooth}
     Let $K\subset \R^{n+m}$ be a curved convex body. Then the support function of $\Sp{K}$ is for all $u\in W$
    \begin{equation}\label{eq:supofsmooth}
         h_{\Sp{K}}(u)=\int_V \langle u, \nabla h_K(u+\xi)\rangle \cdot J_{\psi_u}(\xi)\ \dd\xi
    \end{equation}
   where $\psi_u:V\to V$ is given by $\psi_u(\xi)=\left(\pi\circ\nabla h_K\right)(u+\xi)$ and $J_{\psi_u}(\xi)$ denotes its Jacobian at the point $\xi$.
\end{theorem}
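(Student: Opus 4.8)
The plan is to deduce Theorem~\ref{thm:supofsmooth} directly from Lemma~\ref{lemma:supofsmooth} by pairing the vector-valued identity there against the direction $u$ itself. First I would recall that for any convex body $L$ and any direction $w$ at which $h_L$ is differentiable, the supporting point satisfies $h_L(w)=\langle w,\nabla h_L(w)\rangle$ by Proposition~\ref{prop:propertieshK}--$\mathit{(iii)}$ (Euler's relation for the $1$-homogeneous function $h_L$, or simply the fact that the maximizer $x=\nabla h_L(w)$ realizes $h_L(w)=\langle w,x\rangle$). So it suffices to know that $h_{\Sp K}$ is differentiable at $u$, and then take the scalar product of both sides of the formula in Lemma~\ref{lemma:supofsmooth} with $u$:
\begin{equation}
h_{\Sp K}(u)=\langle u,\nabla h_{\Sp K}(u)\rangle=\Big\langle u,\int_V \nabla h_K(u+\xi)\cdot J_{\psi_u}(\xi)\,\dd\xi\Big\rangle=\int_V \langle u,\nabla h_K(u+\xi)\rangle\cdot J_{\psi_u}(\xi)\,\dd\xi,
\end{equation}
where the last equality is just linearity of the integral (pulling the fixed vector $u$ inside), which is legitimate because the integrand is integrable — indeed the integral already converges as a vector-valued integral by Lemma~\ref{lemma:supofsmooth}, and $\langle u,\cdot\rangle$ is a bounded linear functional.

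The one genuine point to justify is the differentiability of $h_{\Sp K}$ at $u\in W$. I would argue this via Proposition~\ref{prop:strictlyconvex}: since $K$ is curved, each fiber $K_x$ is a smooth (in fact $C^2_+$) convex body — more precisely, for $x$ in the interior of $\pi(K)$ the fiber $K_x$ is full-dimensional in $W$ with a $C^2$ boundary of positive curvature, hence strictly convex in every direction of $W$; thus almost all fibers are strictly convex in direction $u$, and by Proposition~\ref{prop:strictlyconvex} the fiber body $\Sp K$ is strictly convex in direction $u$, which by Proposition~\ref{prop:propertieshK}--$\mathit{(iii)}$ is exactly the statement that $h_{\Sp K}$ is differentiable at $u$ with $\nabla h_{\Sp K}(u)$ the unique point of $(\Sp K)^u$. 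Alternatively, and perhaps more cleanly, I can avoid invoking strict convexity of the fibers and simply observe that Lemma~\ref{lemma:supofsmooth} already produces a well-defined vector $\nabla h_{\Sp K}(u)$; the notation there presupposes differentiability, so if Lemma~\ref{lemma:supofsmooth} is taken as given, Theorem~\ref{thm:supofsmooth} is an immediate corollary and there is nothing further to check.

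I do not expect a serious obstacle here — the theorem is essentially a one-line consequence of the lemma plus Euler's relation. The only care needed is bookkeeping: confirming that $u$ has norm $1$ is not actually required for the scalar-product step (the homogeneity degrees work out for any $u\in W$, since both sides of~\eqref{eq:supofsmooth} are continuous and $(n+1)$-homogeneous in $u$ by Proposition~\ref{prop:n+1hom}, so it is enough to verify the identity on the unit sphere of $W$ where Lemma~\ref{lemma:supofsmooth} applies, and then extend by homogeneity), and that the change of variables $x=\psi_u(\xi)$ and the integrability of $\xi\mapsto\langle u,\nabla h_K(u+\xi)\rangle J_{\psi_u}(\xi)$ are inherited verbatim from the proof of Lemma~\ref{lemma:supofsmooth}. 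So the write-up is: state $h_{\Sp K}(u)=\langle u,\nabla h_{\Sp K}(u)\rangle$, invoke Lemma~\ref{lemma:supofsmooth}, move $u$ inside the integral, and note the extension from $\|u\|=1$ to all of $W$ by homogeneity.
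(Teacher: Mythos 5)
Your argument is exactly the paper's proof: the paper disposes of the theorem in one line by applying Lemma~\ref{lemma:supofsmooth} to the identity $h_{\Sp{K}}(u)=\langle u, \nabla h_{\Sp{K}}(u)\rangle$, which is precisely your pairing-with-$u$ step, and your extra remarks on differentiability and on extending from the unit sphere are correct bookkeeping that the paper leaves implicit. One small correction to a parenthetical: both sides of~\eqref{eq:supofsmooth} are $1$-homogeneous in $u$ (as any support function is), not $(n+1)$-homogeneous --- Proposition~\ref{prop:n+1hom} concerns scaling the body $K$, not the direction $u$ --- but this does not affect your extension-by-homogeneity step.
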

\begin{proof}
    Apply the previous lemma to $h_{\Sp{K}}(u)=\langle u, \nabla h_{\Sp{K}}(u)\rangle$.
\end{proof}

Assume that the support function $h_K$ is \emph{algebraic}, i.e. it is a root of some polynomial equation. Then, the integrand in Lemma~\ref{lemma:supofsmooth} and in Theorem~\ref{thm:supofsmooth} is also algebraic. Indeed, it is simply $ \nabla h_K(u+\xi)$ times the Jacobian of $\psi_u$ which is a composition of algebraic functions. 
We can generalize this concept in the direction of \emph{$D$--modules} (see \cite{Zeilberger:HolonomicSystems}, or \cite{Holonomic} for a text with a view towards applied nonlinear algebra). One can define what it means for a $D$--ideal of the Weyl algebra $D$ to be \emph{holonomic}. Then a function is holonomic if its annihilator, a $D$--ideal, is holonomic. Intuitively, this means that such function satisfies a system of linear homogeneous differential equations with polynomial coefficients, plus a suitable dimension condition. Holonomicity can be seen as a generalization of algebraicity which is closed under integration. 
We say that a convex body $K$ is \emph{holonomic} if its support function $h_K$ is holonomic. In this setting, the fiber body satisfies the following property.

\begin{corollary}\label{cor:holonomiccurved}
    If K is a curved holonomic convex body, then its fiber body is again holonomic.
    \begin{proof}
        We prove that the integrand in Theorem~\ref{thm:supofsmooth} is a holonomic function of $u$ and $\xi$. Then the result follows from the fact that the integral of a holonomic function is holonomic \cite[Proposition~$2.11$]{Holonomic}. If $h_K$ is holonomic then $\nabla h_K (u+\xi)$ is a holonomic function of $u$ and $\xi$, as well as its scalar product with $u$. It remains to prove that the Jacobian of $\psi_u$ is holonomic. 
        But $\psi_u$ is the projection of a holonomic function and thus holonomic, so the result follows.
    \end{proof}
\end{corollary}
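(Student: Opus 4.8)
The plan is to show that the integrand appearing in Theorem~\ref{thm:supofsmooth} is holonomic as a function of the joint variables $(u,\xi)$, and then invoke the closure of holonomic functions under integration, namely \cite[Proposition~$2.11$]{Holonomic}, to conclude that $h_{\Sp{K}}(u)$ is holonomic in $u$. This reduces the statement to a sequence of closure properties for the class of holonomic functions: closure under restriction/substitution of affine-linear arguments, under taking partial derivatives, under multiplication, and under the projection needed to form a Jacobian determinant.

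First I would recall that if $h_K$ is holonomic then so is each partial derivative $\partial_i h_K$, hence the vector field $\nabla h_K$ has holonomic components. Substituting the affine argument $u+\xi$ (a linear change of coordinates on $\R^{n+m}$, with $u\in W$ and $\xi\in V$) preserves holonomicity, so $\nabla h_K(u+\xi)$ is a holonomic function of $(u,\xi)$; pairing with $u$ and summing gives that $\langle u,\nabla h_K(u+\xi)\rangle$ is holonomic in $(u,\xi)$. Next I would treat $\psi_u(\xi)=(\pi\circ\nabla h_K)(u+\xi)$: each of its $n$ components is again a holonomic function of $(u,\xi)$ by the same substitution argument, now post-composed with the coordinate projection $\pi$, which is linear and hence harmless. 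Then the entries of the differential $\dd_\xi\psi_u$ are the partial derivatives $\partial_{\xi_j}(\psi_u)_k$, all holonomic, and the Jacobian $J_{\psi_u}(\xi)=\det(\dd_\xi\psi_u)$ is a polynomial in these entries; since holonomic functions form a ring, $J_{\psi_u}(\xi)$ is holonomic in $(u,\xi)$. Multiplying by $\langle u,\nabla h_K(u+\xi)\rangle$, again using the ring structure, shows the full integrand is holonomic in $(u,\xi)$. Integrating out $\xi$ over $V$ then yields a holonomic function of $u$, which is precisely $h_{\Sp{K}}$ by Theorem~\ref{thm:supofsmooth}.

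The main obstacle — and the point that deserves care rather than a one-line dismissal — is the claim buried in the phrase ``$\psi_u$ is the projection of a holonomic function and thus holonomic.'' The clean closure statements for holonomic functions are under substitution of \emph{polynomial} (here affine) arguments and under integration, not under composition with arbitrary maps; what we actually need is that post-composing a holonomic $D$-module-theoretic object with the linear projection $\pi$ keeps it holonomic, which is just the affine-substitution closure applied to the target coordinates, so it is genuinely routine, but one should phrase it as such rather than as ``projection.'' A secondary subtlety is that $\psi_u$ depends on $u$ as a parameter, so throughout one must work with holonomic functions in the joint variable $(u,\xi)\in W\times V$ and only at the end integrate over the $\xi$-slice; checking that the relevant $D$-ideal in the Weyl algebra on $W\times V$ is holonomic (the dimension count) is automatic from the closure operations but should be acknowledged. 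Finally, one needs that the Jacobian is not merely a measurable function but lands in the holonomic ring — this is immediate once the matrix entries are holonomic, since $\det$ is a polynomial and the holonomic functions are closed under sums and products \cite[Section~$2$]{Holonomic}.

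I would therefore organize the write-up as: (1) reduce to holonomicity of the integrand in the joint variables via \cite[Proposition~$2.11$]{Holonomic}; (2) invoke closure under differentiation to get $\nabla h_K$ holonomic; (3) invoke closure under affine substitution to get $\nabla h_K(u+\xi)$ and $\psi_u(\xi)$ holonomic in $(u,\xi)$; (4) invoke the ring structure for the scalar product with $u$, for the Jacobian determinant, and for the final product; (5) integrate over $\xi$. The only line that is not a direct citation is step (3) applied to $\psi_u$, which I would spell out as ``affine substitution in the source followed by the linear projection $\pi$ in the target,'' both of which are instances of the substitution closure property.
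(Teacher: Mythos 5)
Your proposal is correct and follows essentially the same route as the paper's proof: reduce to holonomicity of the integrand in the joint variables $(u,\xi)$ and conclude via closure of holonomic functions under integration \cite[Proposition~$2.11$]{Holonomic}. You simply make explicit the closure properties (differentiation, affine substitution, ring structure for the scalar product and the Jacobian determinant) that the paper invokes in compressed form, including the step the paper phrases as ``$\psi_u$ is the projection of a holonomic function.''
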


\subsection{A case study: Schneider's polynomial body}

In~\cite[p.203]{bible} Schneider exhibits an example of a one parameter family of semialgebraic centrally symmetric convex bodies that are not zonoids (see Section~\ref{sec:Zonoids} for a definition of zonoids). Their support function is polynomial when restricted to the sphere. We will show how in that case Theorem~\ref{thm:supofsmooth} makes the computation of the fiber body relatively easy.

\begin{definition}
    Schneider's polynomial body is the convex body $\mathcal{S}_\alpha\in \KK(\R^3)$ whose support function is given by (see~\cite[p.203]{bible})
    \begin{equation}
        h_{\mathcal{S}_\alpha}(u)=\|u\|\left(1+\frac{\alpha}{2}\left(\frac{3(u_3)^2}{\|u\|^2}-1\right)\right)
    \end{equation}
    for $\alpha\in[-8/20,-5/20].$
\end{definition}
Let $\pi:=\langle e_1, \cdot \rangle:\R\oplus\R^2\to \R$ be the projection onto the first coordinate. We want to apply Theorem~\ref{thm:supofsmooth} to compute the support function of $\Sp{\mathcal{S}_\alpha}$. For the gradient we obtain:
\begin{equation}
    \nabla h_{\mathcal{S}_\alpha} (u)=\frac{1}{2\|u\|^3}\begin{pmatrix}
    -u_1\left((u_1)^2(\alpha-2)+ (u_2)^2(\alpha-2)+2(u_3)^2(2\alpha-1)\right)   \\ 
    -u_2\left((u_1)^2(\alpha-2)+ (u_2)^2(\alpha-2)+2(u_3)^2(2\alpha-1)\right)   \\
    \tfrac{u_3}{\|u\|^2}\left((u_1)^2(5\alpha+2)+ (u_2)^2(5\alpha+2)+2(u_3)^2(2\alpha+1)\right)  
    \end{pmatrix}.
\end{equation}
For $u=(0,u_2,u_3)$, the Jacobian is $J_{\psi_u}(t)=\frac{\dd}{\dd t}\left(\pi\circ\nabla h_{\mathcal{S}_\alpha}(t,u_2,u_3)\right)$, which gives
\begin{equation}
    J_{\psi_u}(t)=\frac{ t^2(-(u_2)^2(\alpha-2)+(u_3)^2(5\alpha+2))-\|u\|^2((u_2)^2(\alpha-2)+2(u_3)^2(2\alpha-1)) }{ 2(t^2+\|u\|^2)^\frac{5}{2} }.
\end{equation}
Substituting in~\eqref{eq:supofsmooth}, we integrate $\langle u, \nabla h_{\mathcal{S}_\alpha} (t,u_2,u_3)\rangle J_{\psi_u}(t)$ and get the support function of the fiber body (see Figure~\ref{fig:fiberofpoly}) which is again polynomial:
\begin{equation}\label{eq:suppofpoly}
    h_{\Sp{\mathcal{S}_\alpha}}(u)=\frac{\pi}{64\|u\|^3} \left({8(\alpha-2)(u_2)^4-8(\alpha^2+2\alpha-8)(u_2)^2(u_3)^2+(-25\alpha^2+16\alpha+32)(u_3)^4}\right).
\end{equation}

\begin{figure}
    \centering
    \includegraphics[width=0.4\textwidth]{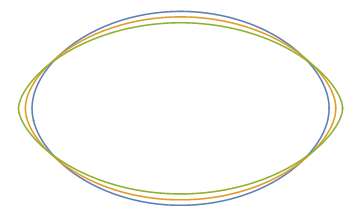}
    \caption{Fiber body of Schneider's polynomial body for $\alpha=-i/20$ with $i=5,6$ and $7$}
    \label{fig:fiberofpoly}
\end{figure}
\section{Zonoids}\label{sec:Zonoids}
In this section, we focus on the class of \emph{zonoids}. Let us first recall some definitions and introduce some notation. For more details we refer to~\cite[Section~$3.5$]{bible}. 

We will use the following notation for centered segments: for any $x\in\R^{n+m}$ we write
    \begin{equation}\label{eq:defseg}
        \seg{x}:=\frac{1}{2}\left[-x,x\right].
    \end{equation}

\begin{definition}
    A convex body $K\in\KK(\R^{n+m})$ is called a \emph{zonotope} if there exist $x_1,\ldots x_N\in\R^{n+m}$ such that, with the notation introduced above, $K=\seg{x_1}+\cdots+\seg{x_N}$. A \emph{zonoid} is a limit (in the Hausdorff distance) of zonotopes. The space of zonoids of $\R^{n+m}$ will be denoted by $\ZZo(\R^{n+m})$.
\end{definition}

\begin{remark}
    It follows immediately from the definition that all zonoids are centrally symmetric centered in the origin, i.e. if $K\in \ZZo(\R^{n+m})$ then $(-1)K=K$. In general the definition of zonoids may also include translations of such bodies. The elements of $\ZZo(\R^{n+m})$ are then called \emph{centered} zonoids. For simplicity here we chose to omit the term ``centered''.
\end{remark}

We introduce the approach of Vitale in~\cite{vitale} using random vectors. The following is~\cite[Theorem~$3.1$]{vitale} rewritten in our context.

\begin{proposition}\label{prop:VitaleZon}
    A convex body $K\in\KK(\R^{n+m})$ is a zonoid if and only if there is a random vector $X\in \R^{n+m}$ with $\EE\|X\|<\infty$ such that for all $u\in\R^{n+m}$
    \begin{equation}\label{eq:hofVitZon}
        h_K(u)=\frac{1}{2} \EE\left|\langle u, X\rangle\right|.
    \end{equation}
    We call such a zonoid the \emph{Vitale zonoid} associated to the random vector $X$, and denote it by $K_0(X)$.
\end{proposition}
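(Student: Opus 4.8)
\textbf{Proof strategy for Proposition~\ref{prop:VitaleZon}.}

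The plan is to prove both directions of the equivalence, treating~\eqref{eq:hofVitZon} as the bridge between the probabilistic and the geometric descriptions. First I would handle the easy direction: given a random vector $X$ with $\EE\|X\|<\infty$, I want to show that $h(u):=\tfrac12\EE|\langle u,X\rangle|$ is the support function of a zonoid. One checks directly that $h$ is positively homogeneous of degree $1$ and sublinear (subadditivity follows from the triangle inequality $|\langle u+v,X\rangle|\le|\langle u,X\rangle|+|\langle v,X\rangle|$ together with linearity of expectation; finiteness follows from $|\langle u,X\rangle|\le\|u\|\,\|X\|$ and $\EE\|X\|<\infty$), so by the standard correspondence between sublinear functions and convex bodies (see~\cite[Theorem~1.7.1]{bible}) there is a unique $K\in\KK(\R^{n+m})$ with $h_K=h$. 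To see that $K$ is a zonoid, approximate: pick i.i.d. copies $X_1,\ldots,X_N$ of $X$ and form the empirical zonotope $Z_N:=\frac1N\sum_{j=1}^N \seg{X_j}$, whose support function is $h_{Z_N}(u)=\frac{1}{2N}\sum_{j=1}^N|\langle u,X_j\rangle|$. By the strong law of large numbers $h_{Z_N}(u)\to h(u)$ pointwise a.s.; since all the functions involved are $1$-homogeneous and uniformly Lipschitz on the unit sphere (Lipschitz constant bounded by $\tfrac12\sup_j\|X_j\|$, which needs a little care — alternatively use a deterministic discretization of the law of $X$), pointwise convergence on a dense subset of $S^{n+m-1}$ upgrades to uniform convergence, hence $Z_N\to K$ in the Hausdorff distance and $K$ is a zonoid. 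A cleaner variant avoiding the Lipschitz subtlety is to truncate $X$ on $\{\|X\|\le R\}$, discretize its law into finitely many atoms, obtaining honest zonotopes converging to $K$ as $R\to\infty$ and the discretization is refined.

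For the converse, suppose $K$ is a zonoid, so there are zonotopes $Z_k=\seg{x_1^{(k)}}+\cdots+\seg{x_{N_k}^{(k)}}$ with $Z_k\to K$. Each $Z_k$ is the Vitale zonoid of the discrete random vector $X^{(k)}$ taking value $N_k\,x_j^{(k)}$ with probability $1/N_k$, since then $\tfrac12\EE|\langle u,X^{(k)}\rangle|=\tfrac{1}{2}\sum_{j=1}^{N_k}|\langle u,x_j^{(k)}\rangle|=h_{Z_k}(u)$. These random vectors correspond to finite Borel measures $\mu_k$ on $S^{n+m-1}$ (the ``generating measures'': push the law of $X^{(k)}/\|X^{(k)}\|$ forward, weighted by $\|X^{(k)}\|$, after symmetrizing so that $\mu_k$ is even) with $h_{Z_k}(u)=\tfrac12\int_{S^{n+m-1}}|\langle u,v\rangle|\,\dd\mu_k(v)$. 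The total masses $\mu_k(S^{n+m-1})=\EE\|X^{(k)}\|$ are bounded, because $h_{Z_k}(u)$ is bounded uniformly in $k$ (the $Z_k$ converge, hence are contained in a common ball) and $\int|\langle u,v\rangle|\,\dd\mu_k(v)$ controls the mass up to a dimensional constant by integrating over $u$ in the sphere. By weak-$*$ compactness of bounded measures on the compact space $S^{n+m-1}$, pass to a subsequence with $\mu_k\xrightarrow{*}\mu$; since $v\mapsto|\langle u,v\rangle|$ is continuous, $h_{Z_k}(u)\to\tfrac12\int|\langle u,v\rangle|\,\dd\mu(v)$ for every $u$, and the left side also converges to $h_K(u)$, so $h_K(u)=\tfrac12\int_{S^{n+m-1}}|\langle u,v\rangle|\,\dd\mu(v)$. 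Finally, realize $\mu$ probabilistically: set $c:=\mu(S^{n+m-1})$, let $\Theta$ be a random unit vector with law $\mu/c$, and put $X:=c\,\Theta$; then $\EE\|X\|=c<\infty$ and $\tfrac12\EE|\langle u,X\rangle|=\tfrac{c}{2}\int|\langle u,v\rangle|\,\dd(\mu/c)(v)=h_K(u)$, as desired.

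The main obstacle is the converse direction, specifically establishing the uniform bound on the total masses $\mu_k(S^{n+m-1})$ and then extracting the limiting measure: one must make sure the mass does not escape or blow up, which is exactly where the hypothesis $\EE\|X\|<\infty$ (equivalently, finiteness of the generating measure) enters and cannot be dropped. The homogeneity-of-degree-one direction and the weak-$*$ passage to the limit are then routine. I would remark that this is essentially the classical theorem that zonoids are precisely the bodies whose support functions are $L^1$-norms with respect to an even measure on the sphere~\cite[Section~3.5]{bible}; Vitale's contribution is the clean random-vector repackaging, and the proof above is the translation of the measure-theoretic statement into that language.
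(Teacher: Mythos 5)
Your argument is correct, but note that the paper does not actually prove this statement: it is quoted directly as Vitale's theorem (\cite[Theorem~3.1]{vitale}), so your self-contained proof does genuinely more work than the text. What you give is, as you yourself remark, the classical characterization of zonoids as the bodies with $h_K(u)=\tfrac12\int_{S^{n+m-1}}|\langle u,v\rangle|\,\dd\mu(v)$ for a finite (even) measure $\mu$ (cf.\ \cite[Section~3.5]{bible}), translated into the random-vector language: the forward direction by truncating and discretizing the law of $X$ into honest zonotopes (or by the empirical-zonotope/SLLN argument), the converse by rewriting each approximating zonotope as a measure on the sphere, bounding the total masses by integrating the uniformly bounded support functions over the sphere, and extracting a weak-$*$ limit. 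Both directions are sound; two small points to tidy are (i) in the SLLN variant the Lipschitz constant of $h_{Z_N}$ on the sphere is the empirical average $\tfrac{1}{2N}\sum_{j}\|X_j\|$, which converges a.s.\ by the strong law, rather than $\tfrac12\sup_j\|X_j\|$ --- your truncation/discretization alternative sidesteps this anyway --- and (ii) in the converse you should dispose of the degenerate case $\mu(S^{n+m-1})=0$, i.e.\ $K=\{0\}$, by taking $X=0$ before normalizing $\mu$ to a probability law. The citation buys the paper brevity; your proof buys independence from Vitale's paper and makes transparent that the hypothesis $\EE\|X\|<\infty$ is exactly finiteness of the generating measure, which is where the compactness argument lives.
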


\subsection{The fiber body of a zonoid}\label{sec:fiber_of_zonoid}

We now show that the fiber body of a zonoid is a zonoid and give a formula to compute it in Theorem~\ref{thm:Fiberofzonoids}. Let us first introduce some of the tools used by Esterov in~\cite{esterovmixedfiber}.
\begin{definition}
    For any $u\in W$ define $T_u:=Id_V\oplus \langle u, \cdot\rangle : V\oplus W \to V \oplus \R$.
\end{definition}
\begin{definition}
    Let $C\in\KK(V\oplus \R)$. The \emph{shadow volume} $V_+(C)$ of $C$ is defined to be the integral of the maximal function on $\pi(C)\subset V$ such that its graph is contained in C, i.e. 
    \begin{equation}
        V_+(C)=\int_{\pi(C)}\varphi(x)\dd x,
    \end{equation} where $\varphi(x)=\sup\set{t}{(x,t)\in C}$. In particular if $(-1)C=C$, then the shadow volume is $V_+(C)=\tfrac{1}{2}\Vol_{n+1}(C)$.
\end{definition}

The \emph{shadow volume} can then be used to express the support function of the fiber body.

\begin{lemma}
    For $u\in W$  and $K\in\KK(\R^{n+m})$, we have
    \begin{equation}
        h_{\Sigma_\pi K}(u)=V_+\left(T_u(K)\right).
    \end{equation}
    In particular if $(-1)K=K$,
    \begin{equation}\label{eq:suppShadow2}
        h_{\Sigma_\pi K}(u)=\frac{1}{2}\Vol_{n+1}\left(T_u(K)\right).
    \end{equation}
    \begin{proof}
        We also denote by $\pi: V\oplus \R\to V$ the projection onto $V$. The shadow volume is the integral on $\pi(T_u(K))=\pi(K)$ of the function $\varphi(x)=\sup\set{t}{(x,t)\in T_u(K)}=\sup\set{\langle u, y \rangle}{(x,y)\in K}=h_{K_x}(u)$. Thus the result follows from Proposition~\ref{prop:supportaverage}.
    \end{proof}
\end{lemma}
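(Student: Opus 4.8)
The plan is to reduce the statement directly to Proposition~\ref{prop:supportaverage} by unwinding the definition of the shadow volume. First I would record that, as in the statement, $\pi$ now also denotes the projection $V\oplus\R\to V$, and observe that $T_u=\mathrm{Id}_V\oplus\langle u,\cdot\rangle$ acts as the identity on the $V$-factor; hence $T_u(K)$ is a convex body in $V\oplus\R$ (a linear image of a convex body) with $\pi(T_u(K))=\pi(K)$. This already disposes of the degenerate case: if $\dim\pi(K)<n$ then $\Sp{K}=\{0\}$ and $V_+(T_u(K))$ is an integral over a Lebesgue-null set, so both sides vanish.

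Next I would identify the fibers of $T_u(K)$. A point $(x,t)\in V\oplus\R$ lies in $T_u(K)$ exactly when there is $y\in W$ with $(x,y)\in K$ and $\langle u,y\rangle=t$, i.e. when $t\in\langle u,K_x\rangle$; since $K_x$ is a convex body, this fiber is a segment. Therefore the upper envelope appearing in the definition of $V_+$ is
\[
\varphi(x)=\sup\set{t}{(x,t)\in T_u(K)}=\sup\set{\langle u,y\rangle}{y\in K_x}=h_{K_x}(u),
\]
the supremum being attained by compactness of $K_x$. (The only small point to record is that $x\mapsto h_{K_x}(u)$ is measurable, which is already implicit in Proposition~\ref{prop:supportaverage}; alternatively $\varphi$ is the upper envelope of the compact convex set $T_u(K)$, hence upper semicontinuous and in particular Borel.) With this identification,
\[
V_+(T_u(K))=\int_{\pi(K)}\varphi(x)\,\dd x=\int_{\pi(K)}h_{K_x}(u)\,\dd x=h_{\Sp{K}}(u)
\]
by Proposition~\ref{prop:supportaverage}. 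For the refinement \eqref{eq:suppShadow2}, note that $T_u$ linear implies $(-1)T_u(K)=T_u((-1)K)=T_u(K)$, so $T_u(K)$ is centrally symmetric, and the observation built into the definition of the shadow volume gives $V_+(T_u(K))=\tfrac12\Vol_{n+1}(T_u(K))$.

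I do not expect a genuine obstacle: the argument is essentially bookkeeping about how the linear map $T_u$ interacts with the fibration $\pi$. The single place that deserves care is the identification that the fiber of $T_u(K)$ over $x$ equals $\langle u,K_x\rangle$ and that its supremum is $h_{K_x}(u)$ — this follows immediately from the shape $T_u=\mathrm{Id}_V\oplus\langle u,\cdot\rangle$ — together with the routine measurability remark needed for the integral defining $V_+$ to make sense.
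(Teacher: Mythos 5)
Your argument is correct and is essentially the paper's own proof: both identify $\pi(T_u(K))=\pi(K)$, compute the upper envelope $\varphi(x)=\sup\set{t}{(x,t)\in T_u(K)}=h_{K_x}(u)$, and conclude via Proposition~\ref{prop:supportaverage}, with the symmetric case following from linearity of $T_u$. Your additions (measurability of $\varphi$, the degenerate case) are just extra bookkeeping on the same route.
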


\begin{remark}\label{rmk:projectionbody}
        Note that if $m=2$ then $T_u$ is the projection onto the hyperplane spanned by $V$ and $u$. In that case~\eqref{eq:suppShadow2} is the formula for the support function of the \emph{projection body} $\Pi K$ of $K$ at $Ju$, where $J$ is a rotation by $\pi/2$ in $W$, see~\cite[Section~10.9]{bible}. Thus in that case, $\Sigma_\pi K$ is the projection of $\Pi K$ onto $W$ rotated by $\pi/2$. 
\end{remark}

We will show that the mixed fiber body of zonoids comes from a multilinear map defined directly on the vector spaces.

\begin{definition} We define the following (completely skew-symmetric) multilinear map:
    \begin{align}
        F_\pi:(V\oplus W)^{n+1}             &\to W  \\
        \left((x_1,y_1),\ldots,(x_{n+1},y_{n+1})\right)    &\mapsto \frac{1}{(n+1)!}\sum_{i=1}^{n+1}(-1)^{n+1-i} (x_1\wedge\cdots\wedge \widehat{x_i} \wedge \cdots \wedge x_{n+1}) y_{i}
    \end{align} 
    where $x_1\wedge\cdots\wedge \widehat{x_i} \wedge \cdots \wedge x_{n+1}$ denotes the determinant of the chosen vectors omitting $x_i$.
\end{definition}

We are now able to prove the main result of this section, here stated in the language of the Vitale zonoids introduced in Proposition~\ref{prop:VitaleZon}.

\begin{theorem}\label{thm:Fiberofzonoids}
    The fiber body of a zonoid is a zonoid. Moreover, if $X\in \R^{n+m}$ is a random vector such that $\EE\|X\|<\infty$ and $K:=K_0(X)$ is the associated Vitale zonoid, then
    \begin{equation}\label{eq:FormulaZon}
        \Sigma_\pi K=K_0(F_\pi(X_1,\ldots,X_{n+1}))
    \end{equation}
    where $X_1,\ldots,X_{n+1}\in \R^{n+m}$ are i.i.d. copies of $X$.
    In other words, the support function of the fiber body $\Sigma_\pi K$ is given for all $u\in W$ by
    \begin{equation}\label{eq:FormulaZonsupp}
        h_{\Sigma_\pi K}(u)=\frac{1}{2}\EE|\langle u, Y\rangle|
    \end{equation}
    where $Y\in W$ is the random vector defined by $Y:=F_\pi(X_1,\ldots,X_{n+1})$.
    \begin{proof}
        Suppose that $K=K_0(X)$ and let $u\in W$. Note that by~\eqref{eq:hofVitZon} and Proposition~\ref{prop:propertieshK}--$\mathit{(ii)}$, $T_u(K)=K_0\left(T_u(X_1)\right)$. Thus by~\eqref{eq:suppShadow2} and \cite[Theorem~$3.2$]{vitale} we get 
        \begin{equation}\label{eq:spkinproof}
            h_{\Sp{K}}(u)=\tfrac{1}{2}\Vol\left(K_0(T_u(X))\right)=\tfrac{1}{2}\tfrac{1}{(n+1)!}\EE|T_u(X_1)\wedge\cdots\wedge T_u(X_{n+1})|
        \end{equation}
        where $X_1,\ldots,X_{n+1}\in \R^{n+m}$ are iid copies of $X$.
        
        Now let us write $X_i:=(\alpha_i,\beta_i)$ with $\alpha_i\in V$ and $\beta_i\in W$. Then
        \begin{align}
           \left|T_u(X_1)\wedge\cdots\wedge T_u(X_{n+1}) \right| &= \left|\left(\alpha_1,\langle u, \beta_1\rangle\right)\wedge \cdots \wedge \left(\alpha_{n+1},\langle u, \beta_{n+1}\rangle\right) \right|     \\
            &=\left| \sum_{i=1}^{n+1}(-1)^{n+1-i} (\alpha_1\wedge\cdots\wedge \widehat{\alpha_i} \wedge \cdots \wedge \alpha_{n+1}) \langle u,\beta_{i}\rangle\right|   \\
            &=\left|\langle u,(n+1)! F_\pi\left((\alpha_1,\beta_1),\ldots,(\alpha_{n+1},\beta_{n+1})\right) \rangle \right|.
    \end{align}
    Reintroducing this in~\eqref{eq:spkinproof} we obtain~\eqref{eq:FormulaZonsupp}.
        
    \end{proof}
\end{theorem}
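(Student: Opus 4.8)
The plan is to reduce everything to Vitale's description of zonoids via random vectors. By Proposition~\ref{prop:VitaleZon}, every zonoid $K$ is of the form $K=K_0(X)$ for some random vector $X$ with $\EE\|X\|<\infty$, so it suffices to establish formula~\eqref{eq:FormulaZonsupp} for the support function; the assertion that $\Sigma_\pi K$ is a zonoid then follows from Proposition~\ref{prop:VitaleZon} applied in the reverse direction, once we know its support function has the shape $u\mapsto\tfrac12\EE|\langle u,Y\rangle|$ with $\EE\|Y\|<\infty$.

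Fix $u\in W$. Since zonoids are centrally symmetric, $(-1)K=K$ and hence $(-1)T_u(K)=T_u(K)$, so equation~\eqref{eq:suppShadow2} gives $h_{\Sigma_\pi K}(u)=\tfrac12\Vol_{n+1}(T_u(K))$. Next I would recognize $T_u(K)$ as a Vitale zonoid in $V\oplus\R$: using Proposition~\ref{prop:propertieshK}--$\mathit{(ii)}$ together with~\eqref{eq:hofVitZon}, and the facts that $V\perp W$ and $u\in W$, one gets $h_{T_u(K)}(w)=h_K(T_u^{t}w)=\tfrac12\EE|\langle w,T_u(X)\rangle|$ for all $w\in V\oplus\R$, so $T_u(K)=K_0(T_u(X))$. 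Then I invoke the volume formula for Vitale zonoids \cite[Theorem~3.2]{vitale}, which gives $\Vol_{n+1}(K_0(Z))=\tfrac{1}{(n+1)!}\EE|Z_1\wedge\cdots\wedge Z_{n+1}|$ for $Z\in\R^{n+1}=V\oplus\R$ and i.i.d. copies $Z_i$; with $Z=T_u(X)$ and $X_1,\ldots,X_{n+1}$ i.i.d. copies of $X$ this yields $h_{\Sigma_\pi K}(u)=\tfrac{1}{2(n+1)!}\EE|T_u(X_1)\wedge\cdots\wedge T_u(X_{n+1})|$.

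The heart of the proof is then a multilinear-algebra computation. Writing $X_i=\alpha_i+\beta_i$ with $\alpha_i\in V$, $\beta_i\in W$, so that $T_u(X_i)=\alpha_i+\langle u,\beta_i\rangle e$ where $e$ spans the $\R$-factor, I would expand $\bigwedge_{i=1}^{n+1}\bigl(\alpha_i+\langle u,\beta_i\rangle e\bigr)$ in $\Lambda^{n+1}(V\oplus\R)$. Any term taking the $e$-summand from two or more factors vanishes since $e\wedge e=0$, and the single term taking no $e$ is $\alpha_1\wedge\cdots\wedge\alpha_{n+1}$, which vanishes because it is $n+1$ vectors in the $n$-dimensional space $V$. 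Only the $n+1$ terms using $e$ from exactly one factor survive, and moving that $e$ to the last slot produces the sign $(-1)^{n+1-i}$; comparing with the definition of $F_\pi$ gives
\begin{equation}
    T_u(X_1)\wedge\cdots\wedge T_u(X_{n+1})=(n+1)!\,\langle u,F_\pi(X_1,\ldots,X_{n+1})\rangle\, e,
\end{equation}
where the top wedge on $V$ is read off through the same volume form on $V$ that is used in the definition of $F_\pi$. Taking absolute values and substituting back gives $h_{\Sigma_\pi K}(u)=\tfrac12\EE|\langle u,Y\rangle|$ with $Y:=F_\pi(X_1,\ldots,X_{n+1})$, which is~\eqref{eq:FormulaZonsupp}.

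To close, I would check integrability: $F_\pi$ is multilinear, hence $\|F_\pi(x_1,\ldots,x_{n+1})\|\le C\,\|x_1\|\cdots\|x_{n+1}\|$ for a dimensional constant $C$, so by independence $\EE\|Y\|\le C(\EE\|X\|)^{n+1}<\infty$; Proposition~\ref{prop:VitaleZon} then gives $\Sigma_\pi K=K_0(Y)$, a zonoid. I expect the only delicate points to be the sign bookkeeping in the wedge expansion and making the identifications of $\Lambda^{n+1}(V\oplus\R)$ and $\Lambda^{n}V$ with scalars compatible with the determinant convention in the definition of $F_\pi$; everything else is a direct substitution.
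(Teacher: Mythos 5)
Your proposal is correct and follows essentially the same route as the paper's proof: pass via the shadow-volume identity~\eqref{eq:suppShadow2} to $\tfrac12\Vol_{n+1}(T_u(K))$ with $T_u(K)=K_0(T_u(X))$, apply Vitale's volume formula, and expand the wedge to identify $(n+1)!\,\langle u, F_\pi(X_1,\ldots,X_{n+1})\rangle$. The only additions are details the paper leaves implicit, namely the vanishing of the $e$-free term $\alpha_1\wedge\cdots\wedge\alpha_{n+1}$ and the integrability bound $\EE\|Y\|<\infty$ needed to invoke Proposition~\ref{prop:VitaleZon} in the converse direction.
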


This allows to generalize~\cite[Theorem~$4.1$]{fiberpolytopes} for all zonotopes. 
\begin{corollary}\label{cor:fiberofzonotopes}
    For  all $z_1,\ldots, z_{N}\in\R^{n+m}$, the fiber body of the zonotope $\sum_{i=1}^N\seg{z_i}$ is the zonotope given by
    \begin{equation}\label{fiber_zonotope}
        \Sigma_\pi\left(\sum_{i=1}^N\seg{z_i}\right)=(n+1)!\sum_{1\leq i_1<\cdots<i_{n+1}\leq N} \seg{F_\pi(z_{i_1},\ldots, z_{i_{n+1}})}
    \end{equation}
     where we used the notation of~\eqref{eq:defseg}, writing $\seg{x}$ for the segment $[-x/2,x/2]$.
    \begin{proof}
       We apply Theorem~\ref{thm:Fiberofzonoids} to the discrete random vector $X$, that is equal to $N z_i$ with probability $1/N$ for all $i=1,\ldots,N$. In that case one can check from~\eqref{eq:hofVitZon} that the Vitale zonoid $K_0(X)$ is precisely the zonotope $\sum_{i=1}^N\seg{z_i}$, and the result follows from~\eqref{eq:FormulaZonsupp}.
    \end{proof}
\end{corollary}
An implementation of formula \eqref{fiber_zonotope} for \texttt{OSCAR 0.8.2-DEV} \cite{OSCAR} and \texttt{SageMath 9.2} \cite{sagemath} is available at \url{ https://mathrepo.mis.mpg.de/FiberZonotopes}.

Esterov shows in~\cite{esterovmixedfiber} that the map $\Sp:\KK(\R^{n+m})\to \KK(W)$ comes from another map, which is (Minkowski) multilinear in each variable: the \emph{mixed fiber body}. The following is~\cite[Theorem~$1.2$]{esterovmixedfiber}.

\begin{proposition}
    There is a unique symmetric multilinear map 
    \begin{equation}
        \MSp:\left(\KK(\R^{n+m})\right)^{n+1}\to \KK(W)
    \end{equation} such that for all $K\in\KK(\R^{n+m})$,  $\MSp(K,\ldots,K)=\Sigma_\pi(K)$.
\end{proposition}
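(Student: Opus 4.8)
The statement to prove is the existence and uniqueness of the continuous multilinear map $\MSp$ restricting to $\Sigma_\pi$ on the diagonal, i.e. Esterov's Theorem~$1.2$. The natural strategy is the standard \emph{polarization} argument adapted to the Minkowski-additive setting. First I would establish that $\Sigma_\pi$ is a polynomial-type map of degree $n+1$: by Proposition~\ref{prop:n+1hom} it is homogeneous of degree $n+1$ under nonnegative scaling, and one should check that for fixed convex bodies $K_1,\ldots,K_r$ the map $(\lambda_1,\ldots,\lambda_r)\mapsto \Sigma_\pi(\lambda_1 K_1+\cdots+\lambda_r K_r)$, with $\lambda_i\geq 0$, agrees with a polynomial of degree $n+1$ valued in $\KK(W)$ (where ``polynomial valued in $\KK(W)$'' means: its support function, evaluated at any $u\in W$, is a genuine degree-$(n+1)$ polynomial in the $\lambda_i$, with coefficients that are support functions of convex bodies so that the Minkowski-linear-combination interpretation makes sense). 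The cleanest route to this uses the shadow-volume formula $h_{\Sigma_\pi K}(u)=V_+(T_u(K))$ from the Lemma above: since $T_u$ is linear, $T_u(\sum\lambda_i K_i)=\sum\lambda_i T_u(K_i)$, and the shadow volume $V_+$ of a Minkowski combination of bodies in $V\oplus\R$ is a polynomial of degree $n+1$ in the $\lambda_i$ — this is essentially the mixed-volume expansion in $\R^{n+1}$ applied to the ``upper'' part of the body, i.e. one reduces to the classical Minkowski theorem on mixed volumes (see~\cite[Section~$5.1$]{bible}) together with linearity in $u$.

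Granting the polynomiality, the mixed fiber body is then \emph{defined} as the total polarization: $\MSp(K_1,\ldots,K_{n+1})$ is the convex body whose support function is $\frac{1}{(n+1)!}$ times the mixed coefficient of $\lambda_1\cdots\lambda_{n+1}$ in the expansion of $h_{\Sigma_\pi(\lambda_1 K_1+\cdots+\lambda_{n+1}K_{n+1})}(u)$. The work is then to verify that (i) this coefficient is, for each $u$, a support function of an actual convex body in $W$ — here one uses that the mixed volume $V(\,\cdot\,)$ in $\R^{n+1}$ of convex bodies is monotone and that mixed volumes arise as support functions of mixed bodies / or more directly that the coefficient inherits sublinearity in $u$ from $h_{\Sigma_\pi}$ being sublinear and the polarization being a linear combination with the right signs (Minkowski's inequality guarantees nonnegativity of the relevant coefficients, which is what lets one realize them as bodies rather than formal differences); (ii) the resulting map is symmetric and Minkowski-multilinear, which is automatic from the polarization formula; (iii) it is continuous with respect to Hausdorff distance, which follows from continuity of $\Sigma_\pi$ (itself a consequence of the integral formula~\eqref{eq:supportaverage} and dominated convergence, or of continuity of mixed volumes) together with the fact that finite linear combinations of continuous maps are continuous; and (iv) it restricts to $\Sigma_\pi$ on the diagonal, which is exactly the statement that the polynomial evaluated at $\lambda_1=\cdots=\lambda_{n+1}=1$, $K_i=K$, recovers $h_{\Sigma_\pi K}(u)$ by homogeneity and the multinomial theorem.

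For uniqueness, the argument is the usual one: if $\MSp$ and $\MSp'$ are both continuous, multilinear, and agree on the diagonal, then evaluating both at $\lambda_1 K_1+\cdots+\lambda_{n+1}K_{n+1}$ and expanding by multilinearity shows the two polynomials in $(\lambda_1,\ldots,\lambda_{n+1})$ with $\KK(W)$-coefficients coincide for all nonnegative $\lambda_i$; since a polynomial (in each support function $h_{\cdot}(u)$) is determined by its values on a set with nonempty interior, all coefficients agree, and in particular the $\lambda_1\cdots\lambda_{n+1}$ coefficient — i.e. $\MSp=\MSp'$. One subtlety: multilinearity must be invoked for combinations with \emph{nonnegative} coefficients only (Minkowski addition has no inverses), so the polarization identity expressing $\MSp(K_1,\ldots,K_{n+1})$ in terms of diagonal values must be arranged to use only nonnegative combinations — this is the standard inclusion–exclusion form $\sum_{\emptyset\neq S}(-1)^{n+1-|S|}\Sigma_\pi(\sum_{i\in S}K_i)$ read at the level of support functions, where the alternating signs live in $\R$ (acting on support functions) rather than as Minkowski operations.

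\textbf{Main obstacle.} The genuinely non-formal step is (i)–(ii) above: showing that the mixed coefficient of the support-function polynomial is itself the support function of a \emph{bona fide} convex body in $W$, rather than merely a difference of sublinear functions. This is where one must actually use convex geometry — the monotonicity and nonnegativity properties of mixed volumes in $\R^{n+1}$ (via the shadow-volume presentation $h_{\Sigma_\pi K}(u)=V_+(T_u K)$), and the fact that the mixed volume, as a function of one of its body arguments, is a support function (it is the classical mixed-volume–mixed-body correspondence). Everything else — symmetry, multilinearity, continuity, diagonal restriction, uniqueness — is bookkeeping once the degree-$(n+1)$ polynomiality is in hand. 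I would therefore organize the proof as: Step 1, polynomiality of $\lambda\mapsto h_{\Sigma_\pi(\sum\lambda_i K_i)}(u)$ via $V_+$ and classical mixed volumes; Step 2, define $\MSp$ by polarization and check it lands in $\KK(W)$; Step 3, verify multilinearity, symmetry, continuity, diagonal value; Step 4, uniqueness by the polarization/density argument.
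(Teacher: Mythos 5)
A preliminary remark on the comparison: the paper does not prove this proposition at all --- it is stated as a quotation of \cite[Theorem~1.2]{esterovmixedfiber}, so your proposal is effectively measured against Esterov's proof (and McMullen's construction of mixed fiber polytopes in the polytope case), not against an argument in the text. Your overall architecture is the standard and correct frame: polarization of a degree-$(n+1)$ Minkowski-polynomial map, then symmetry, multilinearity, continuity, diagonal restriction and uniqueness as bookkeeping over nonnegative combinations. Your Step 1 (polynomiality of $\lambda\mapsto h_{\Sp{(\sum_i\lambda_iK_i)}}(u)=V_+\bigl(T_u(\sum_i\lambda_iK_i)\bigr)$) can indeed be pushed through: after translating into the upper half-space, the region under the concave roof function of a body in $V\oplus\R$ is Minkowski-additive, so its volume expands polynomially by Minkowski's theorem, with a degree-$n$ correction term for the translation; for centered bodies one even has $V_+=\tfrac12\Vol_{n+1}$ directly.

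The genuine gap is exactly at the step you yourself flag as the main obstacle, and neither of the justifications you offer closes it. What has to be proved is that the coefficient of $\lambda_1\cdots\lambda_{n+1}$ --- up to a constant, the mixed volume $V\bigl(T_uK_1,\ldots,T_uK_{n+1}\bigr)$ in $V\oplus\R$ --- is \emph{sublinear in $u$}. Nonnegativity (your appeal to Minkowski-type inequalities) is beside the point: a nonnegative difference of support functions need not be a support function, and the polarization is an alternating-sign combination, so sublinearity in $u$ is not inherited formally from sublinearity of each $h_{\Sp{(\sum_{i\in S}K_i)}}$. Likewise, the classical fact you invoke --- that a mixed volume is a support function in one of its \emph{body} slots, e.g. $u\mapsto V(\seg{u},C_2,\ldots,C_{n+1})$ is sublinear (the mixed projection body correspondence) --- does not apply here, because $u$ does not enter as a segment occupying one slot: it enters through the linear map $T_u$ applied to \emph{every} slot simultaneously, and subadditivity of $u\mapsto V(T_uK_1,\ldots,T_uK_{n+1})$ is precisely the content of the theorem being proved. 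This is the point where Esterov (and, for polytopes, McMullen via coherent subdivisions and the Cayley trick, followed by approximation in the Hausdorff metric) must do real geometric work; your Steps 3--4 (multilinearity, continuity, uniqueness, and the care to use only nonnegative Minkowski combinations) are fine, but as written the proposal assumes the crucial convexity statement rather than proving it.
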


Once its existence is proved, one can see that the mixed fiber body $\MSp(K_1,\ldots,K_{n+1})$ is the coefficient of $t_1\cdot\ldots\cdot t_{n+1}$, divided by $(n+1)!$, in the expansion of $\Sp{\left(t_1 K_1+\cdots+t_{n+1}K_{n+1}\right)}$. Using this \emph{polarization formula}, one can deduce from Theorem~\ref{thm:Fiberofzonoids} a similar statement for the mixed fiber body of zonoids.

\begin{proposition}\label{prop:mixedfiberofzon}
    The mixed fiber body of zonoids is a zonoid. Moreover, if $X_1,\ldots,X_{n+1}\in \R^{n+m}$ are independent (not necessarily identically distributed) random vectors such that $\EE\|X_i\Vert$ is finite, and $K_i:=K_0(X_i)$ are the associated Vitale zonoids, then 
    \begin{equation}
        \MSp(K_1,\ldots,K_{n+1})=K_0(F_\pi(X_1,\ldots,X_{n+1})).
    \end{equation}
    \begin{proof}
     Let us show the case of $n+1=2$ variables. The general case is done in a similar way. Let $\tilde{X}:=t_1\alpha 2 X_1+ t_2 (1-\alpha) 2 X_2$ where $\alpha$ is a Bernoulli random variable of parameter $1/2$ independent of $X_1$ and $X_2$. Using~\eqref{eq:hofVitZon}, one can check that $K_0(\tilde{X})=t_1 K_1+t_2 K_2.$
     Now let $Y_1$ (respectively $Y_2$) be an i.i.d. copy of $X_1$ (respectively $X_2$) independent of all the other variables. Define $\tilde{Y}:=t_1\beta 2 Y_1+ t_2 (1-\beta) 2 Y_2$ where $\beta$ is a Bernoulli random variable of parameter $1/2$ independent of all the other variables. By Theorem~\ref{thm:Fiberofzonoids} we have that $\Sp(t_1K_1+t_2K_2)=K_0(F_\pi(\tilde{X},\tilde{Y})).$ By~\eqref{eq:hofVitZon}, using the independence assumptions, it can be deduced that for all $t_1,t_2\geq 0$
     \begin{equation}
         h_{K_0(F_\pi(\tilde{X},\tilde{Y}))}=t_1^2 h_{\Sp K_1}+t_2^2 h_{\Sp K_2}+t_1t_2 (h_{K_0(F_\pi(X_1,Y_2))}+h_{K_0(F_\pi(X_2,Y_1))}).
     \end{equation}
     The claim follows from the fact that $K_0(F_\pi(X_1,Y_2))=K_0(F_\pi(X_2,Y_1))=K_0(F_\pi(X_1,X_2))$.
    \end{proof}
\end{proposition}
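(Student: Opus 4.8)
\textbf{Proof plan for Proposition~\ref{prop:mixedfiberofzon}.}

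The strategy is to reduce the statement to Theorem~\ref{thm:Fiberofzonoids} via the polarization formula mentioned just before the proposition: $\MSp(K_1,\dots,K_{n+1})$ is $\frac{1}{(n+1)!}$ times the coefficient of $t_1\cdots t_{n+1}$ in the expansion of $\Sigma_\pi(t_1 K_1+\cdots+t_{n+1}K_{n+1})$ in nonnegative parameters $t_i$. So the plan is: first, realize the Minkowski combination $t_1 K_1+\cdots+t_{n+1}K_{n+1}$ as a single Vitale zonoid $K_0(\tilde X)$ for an explicit random vector $\tilde X$ built out of independent copies of the $X_i$ together with an auxiliary discrete random variable selecting which summand is ``active''; second, feed $\tilde X$ into Theorem~\ref{thm:Fiberofzonoids} to get $\Sigma_\pi(t_1K_1+\cdots+t_{n+1}K_{n+1})=K_0\big(F_\pi(\tilde X^{(1)},\dots,\tilde X^{(n+1)})\big)$ with $\tilde X^{(1)},\dots,\tilde X^{(n+1)}$ i.i.d. copies of $\tilde X$; third, expand the support function $\frac12\EE|\langle u,F_\pi(\tilde X^{(1)},\dots,\tilde X^{(n+1)})\rangle|$ as a polynomial in the $t_i$ using multilinearity of $F_\pi$ and independence, extract the coefficient of $t_1\cdots t_{n+1}$, and identify it — after accounting for the combinatorial factors coming from the selector variables — with $h_{K_0(F_\pi(X_1,\dots,X_{n+1}))}$.

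For the realization step, the clean choice (generalizing the $n+1=2$ case written in the excerpt) is to take $\tilde X := \sum_{j=1}^{n+1} t_j\,\mathds 1[\sigma=j]\,(n+1)\,X_j^{\mathrm{sel}}$, where $\sigma$ is uniform on $\{1,\dots,n+1\}$ independent of everything else and $X_j^{\mathrm{sel}}$ is an independent copy of $X_j$; one checks from~\eqref{eq:hofVitZon} that $h_{K_0(\tilde X)}(v)=\frac12\EE|\langle v,\tilde X\rangle|=\frac12\sum_j t_j\,\frac{1}{n+1}\,(n+1)\,\EE|\langle v,X_j\rangle| = \sum_j t_j\,h_{K_j}(v)$, so indeed $K_0(\tilde X)=\sum_j t_j K_j$. (The factor $(n+1)$ compensates the probability $\tfrac{1}{n+1}$ of the selector; this is the exact analogue of the factor $2$ and the Bernoulli $\alpha$ used in the two-variable proof.) Then Theorem~\ref{thm:Fiberofzonoids} applies verbatim: $\Sigma_\pi K_0(\tilde X)=K_0(F_\pi(\tilde X^{(1)},\dots,\tilde X^{(n+1)}))$ with independent copies $\tilde X^{(k)}$, each carrying its own independent selector $\sigma^{(k)}$ and its own independent family of copies of the $X_j$.

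For the extraction step, substitute the definition of each $\tilde X^{(k)}$ into $F_\pi(\tilde X^{(1)},\dots,\tilde X^{(n+1)})$ and use multilinearity to write it as a sum over tuples $(j_1,\dots,j_{n+1})\in\{1,\dots,n+1\}^{n+1}$ of $\big(\prod_k t_{j_k}\mathds 1[\sigma^{(k)}=j_k](n+1)\big)\,F_\pi(X_{j_1}^{(1)},\dots,X_{j_{n+1}}^{(n+1)})$. The monomial $t_1\cdots t_{n+1}$ is picked out precisely by the tuples $(j_1,\dots,j_{n+1})$ that are permutations of $(1,\dots,n+1)$; by complete skew-symmetry of $F_\pi$ all $(n+1)!$ such permutations give the same value $\pm F_\pi(X_1^{(\cdot)},\dots,X_{n+1}^{(\cdot)})$ inside the absolute value, and one has to track that the probabilities $\prod_k\mathds 1[\sigma^{(k)}=j_k]$ together with the factors $(n+1)$ and the overall $\frac12$ and $\frac{1}{(n+1)!}$ from polarization combine to leave exactly $\frac12\EE|\langle u,F_\pi(X_1,\dots,X_{n+1})\rangle|$, using that the $X_{j}^{(k)}$ appearing are independent with the right marginals so that $F_\pi(X_1^{(\cdot)},\dots,X_{n+1}^{(\cdot)})$ has the same law as $F_\pi(X_1,\dots,X_{n+1})$. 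The main obstacle — and the only genuinely delicate point — is this bookkeeping: making sure that the cross terms (tuples with repeated indices, which contribute to lower-degree monomials only) really do not interfere with the coefficient of $t_1\cdots t_{n+1}$, and that the symmetrization over the $(n+1)!$ permutations is compatible with the absolute value (it is, since $|{-a}|=|a|$). Everything else is a routine application of linearity of expectation and the independence hypotheses, exactly as in the displayed two-variable argument; the general case differs only in the heavier indexing.
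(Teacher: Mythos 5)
Your proposal is correct and follows essentially the same route as the paper: realize $t_1K_1+\cdots+t_{n+1}K_{n+1}$ as a Vitale zonoid via an independent selector variable (your uniform selector with factor $n+1$ is exactly the general-case analogue of the paper's Bernoulli $\alpha$ with factor $2$), apply Theorem~\ref{thm:Fiberofzonoids}, expand by multilinearity and independence, and extract the coefficient of $t_1\cdots t_{n+1}$ via polarization, using skew-symmetry of $F_\pi$ inside the absolute value. The only difference is that you carry out the general $n+1$ directly, whereas the paper writes the case $n+1=2$ and asserts the general case is similar; your factor bookkeeping ($\tfrac1{(n+1)^{n+1}}$ from the selectors against $(n+1)^{n+1}$, then $(n+1)!$ permutation terms cancelling the polarization denominator) indeed closes correctly.
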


\subsection{Discotopes}
In this section, we investigate the fiber bodies of finite Minkowski sums of discs in $\R^3$, called \emph{discotopes}. They also appear in the literature, see~\cite{sanyal_discotopes} for example. Discotopes are zonoids (because discs are zonoids see Lemma~\ref{lem:discotopeeq} below) that are neither polytopes nor curved (see Section~\ref{sec:Smooth}) but still have simple combinatorial properties and a simple support function. For a deep analysis of this family of zonoids, we refer to \cite{GesMer:Discotopes}. We will see how in this case formula~\eqref{eq:FormulaZonsupp} can be useful to compute the fiber body.

\begin{definition}
    Let $v\in \R^3$, we denote by $D_v$ the disc in $v^\perp$ centered at 0 of radius $\|v\|$.
\end{definition}

\begin{lemma}\label{lem:discotopeeq}
Discs are zonoids. If $a,b$ is an orthonormal basis of $v^\perp$, we define the random vector $\sigma (\theta):= \|v\| (\cos(\theta)a+\sin(\theta)b)$ with $\theta\in[0,2\pi]$ uniformly distributed. Then we have 
\begin{equation}\label{eq:DvasKofs}
    D_v=\pi\cdot K_0\left(\sigma (\theta)\right)
\end{equation} where we recall the definition of the Vitale Zonoid associated to a random vector in Proposition~\ref{prop:VitaleZon}. In other words we have:
\begin{equation}\label{eq:DiscRandomVar}
    h_{D_v}(u)=\|v\|\sqrt{\langle u,a\rangle^2+\langle u,b\rangle^2}=\frac{\pi}{2}\EE|\langle u, \sigma (\theta)\rangle|.
\end{equation}
\end{lemma}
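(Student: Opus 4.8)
The plan is to prove the two asserted identities in Lemma~\ref{lem:discotopeeq} directly, namely that $h_{D_v}(u)=\|v\|\sqrt{\langle u,a\rangle^2+\langle u,b\rangle^2}$ and that this equals $\frac{\pi}{2}\EE|\langle u,\sigma(\theta)\rangle|$. The first is an elementary computation; the second is an integral over the circle; and once the second holds, Proposition~\ref{prop:VitaleZon} immediately gives that $D_v$ is a zonoid and that $D_v=\pi\cdot K_0(\sigma(\theta))$ (the factor $\pi$ absorbing the constant discrepancy between $h_{D_v}$ and $\frac12\EE|\langle u,\sigma(\theta)\rangle|$).

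First I would compute the support function of $D_v$. Since $D_v$ is the disc of radius $\|v\|$ in the plane $v^\perp$, for any $u\in\R^3$ we have $h_{D_v}(u)=\max\{\langle u,x\rangle : x\in v^\perp,\ \|x\|\le\|v\|\}$. Writing $u=u_\perp + u_\parallel$ with $u_\perp$ the orthogonal projection of $u$ onto $v^\perp$, the maximum is attained at $x=\|v\|\,u_\perp/\|u_\perp\|$ (or is $0$ if $u_\perp=0$), giving $h_{D_v}(u)=\|v\|\,\|u_\perp\|$. Expanding $u_\perp$ in the orthonormal basis $a,b$ of $v^\perp$ yields $u_\perp=\langle u,a\rangle a+\langle u,b\rangle b$, hence $\|u_\perp\|=\sqrt{\langle u,a\rangle^2+\langle u,b\rangle^2}$, which is the claimed formula.

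Next I would evaluate $\EE|\langle u,\sigma(\theta)\rangle|$. Since $\sigma(\theta)=\|v\|(\cos\theta\, a+\sin\theta\, b)$ and $\theta$ is uniform on $[0,2\pi]$, we get $\langle u,\sigma(\theta)\rangle=\|v\|(\cos\theta\,\langle u,a\rangle+\sin\theta\,\langle u,b\rangle)$. Writing $(\langle u,a\rangle,\langle u,b\rangle)=r(\cos\phi,\sin\phi)$ with $r=\|u_\perp\|$, this becomes $\|v\|\,r\cos(\theta-\phi)$, so
\begin{equation}
    \EE|\langle u,\sigma(\theta)\rangle| = \frac{1}{2\pi}\int_0^{2\pi}\|v\|\,r\,|\cos(\theta-\phi)|\,\dd\theta = \frac{\|v\|\,r}{2\pi}\int_0^{2\pi}|\cos\psi|\,\dd\psi = \frac{\|v\|\,r}{2\pi}\cdot 4 = \frac{2}{\pi}\|v\|\,r,
\end{equation}
using $\int_0^{2\pi}|\cos\psi|\,\dd\psi=4$ and periodicity. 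Therefore $\frac{\pi}{2}\EE|\langle u,\sigma(\theta)\rangle|=\|v\|\,r=\|v\|\sqrt{\langle u,a\rangle^2+\langle u,b\rangle^2}=h_{D_v}(u)$, which is the second identity. Finally, since $\frac12\EE|\langle u,\sigma(\theta)\rangle|=\frac{1}{\pi}h_{D_v}(u)=h_{D_v/\pi}(u)$, Proposition~\ref{prop:VitaleZon} shows $D_v/\pi=K_0(\sigma(\theta))$, i.e. $D_v=\pi\cdot K_0(\sigma(\theta))$, and in particular $D_v$ is a zonoid.

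There is no serious obstacle here; the only mild subtlety is the case $u_\perp=0$ (i.e. $u$ parallel to $v$), where both sides vanish and the ``maximizer'' in the support-function computation is not unique — this is handled trivially by noting both expressions are $0$. The one computation worth stating carefully is $\int_0^{2\pi}|\cos\psi|\,\dd\psi=4$, which pins down the normalizing constant $\pi$.
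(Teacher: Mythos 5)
Your proof is correct, but it follows a different route from the paper. You verify the identity $h_{D_v}(u)=\|v\|\sqrt{\langle u,a\rangle^2+\langle u,b\rangle^2}=\frac{\pi}{2}\EE|\langle u,\sigma(\theta)\rangle|$ pointwise in $u$, by projecting $u$ onto $v^\perp$, writing $\langle u,\sigma(\theta)\rangle=\|v\|\,r\cos(\theta-\phi)$, and computing $\int_0^{2\pi}|\cos\psi|\,\dd\psi=4$; Proposition~\ref{prop:VitaleZon} (together with $h_K=h_L\Leftrightarrow K=L$) then gives both the zonoid property and $D_v=\pi\, K_0(\sigma(\theta))$. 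The paper instead argues by symmetry: it shows $h_{K_0(\sigma(\theta))}(\pm v)=0$, so $K_0(\sigma(\theta))\subset v^\perp$, then uses the invariance of the law of $\sigma(\theta)$ under the stabilizer $O(v^\perp)$ to conclude that $K_0(\sigma(\theta))$ is a centered disc, and finally pins down its radius $\|v\|/\pi$ by evaluating the support function at the single point $a$. Your computation is more explicit and has the advantage of actually establishing the stated formula for $h_{D_v}$ and the full identity~\eqref{eq:DiscRandomVar} at every $u$, without appealing to the equivariance of the Vitale construction ($K_0(gX)=gK_0(X)$ for $g\in O(3)$) or to the fact that a rotation-invariant planar convex body is a disc — facts the paper uses implicitly. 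The paper's symmetry argument, in exchange, avoids the trigonometric reduction and only requires one scalar expectation, $\EE|\cos\theta|=2/\pi$. Your handling of the degenerate case $u\parallel v$ and of the rescaling (a positive multiple of a zonoid is a zonoid, or equivalently use the random vector $\pi\sigma(\theta)$) is fine, so there is no gap.
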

\begin{proof}
    Consider the zonoid $K_0\left(\sigma (\theta)\right)$. We will prove that it is a disc contained in $v^\perp$ centered at $0$ of radius $\|v\|/\pi$.
    
    First of all, since $\sigma (\theta)\in v^\perp$ almost surely, we have $h_{K_0\left(\sigma (\theta)\right)}(\pm v)=0$. Thus $K_0\left(\sigma (\theta)\right)$ is contained in the plane $v^\perp$. Moreover, let $O(v^\perp)$ denote the stabilizer of $v$ in the orthogonal group $O(3)$. The zonoid $K_0\left(\sigma (\theta)\right)$ is invariant under the action of $O(v^\perp)$ thus it is a disc centered at $0$. To compute its radius it is enough to compute the support function at one point: $h_{K_0\left(\sigma (\theta)\right)}(a_1)=\|v\|\cdot\EE|\cos(\theta)|=\|v\|/\pi$ and this concludes the proof.
\end{proof}

\begin{remark}
        Note that the law of the random vector $\sigma(\theta)$ does not depend on the choice of the orthonormal basis $a,b$. It only depends on the line spanned by $v$ and the norm $\|v\|$.
\end{remark}

\begin{definition}
    A convex body $K\subset \R^3$ is called a \emph{discotope} if it can be expressed as a finite Minkowski sum of discs, i.e. if there exist $v_1,\ldots, v_N\in\R^3$, 
    such that $K=D_{v_1}+\cdots+D_{v_N}$. In particular discotopes are zonoids. Moreover we can and will assume without loss of generality that
    \begin{equation}
        \frac{v_i}{\|v_i\|} \neq \pm \frac{v_j}{\|v_j\|} \qquad \hbox{for } i\neq j.
    \end{equation}
\end{definition}

What is the shape of a discotope? In order to answer this question we are going to study the boundary structure of such a convex body, when $N\geq 2$. 
\begin{lemma}\label{lem:boundary_discs}
    Consider the discotope $K = D_{v_1} + \ldots + D_{v_N}$, fix $q\in \partial (D_{v_2}+\ldots + D_{v_N})$ and take the Minkowski sum $D_{v_1}+\{q\}$.
    Then such disc is part of the boundary of the discotope if and only if 
    \begin{equation}\label{eq:cond_qmax}
        \langle q , v_1 \rangle = \pm \max\set{ \langle \tilde{q} , v_1 \rangle}{\tilde{q}\in D_{v_2}+\ldots+D_{v_N}}.
    \end{equation}
\end{lemma}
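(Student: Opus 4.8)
The plan is to understand when the "slice" $D_{v_1}+\{q\}$ sits on $\partial K$ by testing it against supporting hyperplanes, i.e. by comparing support functions. Write $K' := D_{v_2}+\cdots+D_{v_N}$, so $K = D_{v_1}+K'$. A translate $D_{v_1}+\{q\}$ of the disc $D_{v_1}$ lies in $\partial K$ precisely when there is a direction $u$ such that $D_{v_1}+\{q\} \subseteq K^u$, the face of $K$ in direction $u$; equivalently, $\langle u,\cdot\rangle$ attains its maximum $h_K(u)$ on all of $D_{v_1}+\{q\}$. Since $h_K = h_{D_{v_1}} + h_{K'}$ and faces of Minkowski sums decompose as $K^u = D_{v_1}^u + (K')^u$, the condition becomes: $q \in (K')^u$ and $D_{v_1}+\{q\}\subseteq D_{v_1}^u + \{q\}$, the latter forcing $D_{v_1}^u = D_{v_1}$, i.e. $\langle u,\cdot\rangle$ is constant on the whole disc $D_{v_1}$.

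Next I would identify exactly which directions $u$ have $D_{v_1}^u = D_{v_1}$. Because $D_{v_1}$ is the full disc of radius $\|v_1\|$ in $v_1^\perp$ centered at the origin, the linear functional $\langle u,\cdot\rangle$ is constant on it if and only if its restriction to $v_1^\perp$ vanishes, i.e. if and only if $u \in \mathbb{R}v_1$. (In that case the constant value is $0$, consistent with $h_{D_{v_1}}(\pm v_1)=0$.) So the slice $D_{v_1}+\{q\}$ is a face of $K$ exactly when $q \in (K')^{v_1}$ or $q\in (K')^{-v_1}$; that is, $q$ maximizes or minimizes $\langle v_1,\cdot\rangle$ over $K'$. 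Since $K'$ is centrally symmetric, $\max_{\tilde q\in K'}\langle \tilde q,v_1\rangle = -\min_{\tilde q\in K'}\langle \tilde q,v_1\rangle$, and $\langle q,v_1\rangle = \pm\max_{\tilde q\in K'}\langle \tilde q,v_1\rangle$ is precisely condition~\eqref{eq:cond_qmax}.

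Finally I would address the two implications carefully. For the "if" direction: if $\langle q,v_1\rangle = \pm\max_{\tilde q\in K'}\langle \tilde q,v_1\rangle$, then $q\in (K')^{v_1}$ (or $(K')^{-v_1}$), hence $D_{v_1}+\{q\} \subseteq D_{v_1}^{v_1} + (K')^{v_1} = K^{v_1}$, so the disc lies in a face of $K$, in particular in $\partial K$. For the "only if" direction: suppose $D_{v_1}+\{q\}\subseteq \partial K$. Each point of this disc, being a boundary point of $K$, lies in some face $K^{u}$; I want a single $u$ working for the whole (relatively open) disc. Here I would use that $D_{v_1}+\{q\}$ is a $2$-dimensional convex subset of $\partial K$: its relative interior points are all contained in a common face (the unique face containing a given relatively open convex subset of $\partial K$ — a relatively open convex set in the boundary cannot meet two distinct faces). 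Call that face $K^u = D_{v_1}^u + (K')^u$; it contains a translate of the full $2$-disc $D_{v_1}$, which forces $\dim D_{v_1}^u = 2$, hence $D_{v_1}^u = D_{v_1}$ and $u\in\mathbb{R}v_1$, and then $q\in (K')^{u} = (K')^{\pm v_1}$, giving~\eqref{eq:cond_qmax}.

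The main obstacle is the "only if" direction, specifically the step asserting that a $2$-dimensional convex subset of $\partial K$ is contained in a single face and that containing a translate of $D_{v_1}$ forces $D_{v_1}^u$ to be all of $D_{v_1}$. The first part is a standard fact about boundaries of convex bodies (a relatively open convex subset of $\partial K$ lies in a single exposed/nonexposed face), and the second follows since $D_{v_1}^u$ is a face of the disc $D_{v_1}$, so it is either the whole disc, an arc, a point, or a segment boundary chord — only the whole disc is $2$-dimensional — but I would want to state this cleanly, perhaps invoking that $D_{v_1}^u$ must itself contain a translate of $D_{v_1}$ and a face of a convex body containing a homothet of that body at scale $1$ must be the whole body. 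Everything else is a routine bookkeeping of $h_{K} = h_{D_{v_1}}+h_{K'}$ together with the central symmetry of $K'$.
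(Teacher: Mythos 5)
Your proof is correct, but it takes a genuinely different route from the paper. You argue via support sets: writing $K = D_{v_1} + K'$ with $K' = D_{v_2}+\cdots+D_{v_N}$, you use the decomposition $(D_{v_1}+K')^u = D_{v_1}^u + (K')^u$ together with the observation that, for nonzero $u$, $D_{v_1}^u = D_{v_1}$ exactly when $u$ is parallel to $v_1$, so the translated disc sits inside an exposed face of $K$ precisely when $q$ maximizes or minimizes $\langle v_1,\cdot\rangle$ on $K'$. The paper argues instead with the radial function of $K$: assuming the maximality condition it checks directly that $r\bigl((p+q)/\|p+q\|\bigr)=\|p+q\|$ for every $p\in D_{v_1}$ by pairing with $v_1$, carries this out for $N=2$ and delegates general $N$ to induction; in fact only the ``if'' implication is spelled out there. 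Your version treats all $N$ at once and makes both implications explicit, at the price of invoking two standard facts (support sets of a Minkowski sum decompose; a face of $K$ contains any convex subset of $K$ whose relative interior it meets).

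One step of your ``only if'' direction needs a small patch. From $D_{v_1}+\{q\}\subseteq K^u=D_{v_1}^u+(K')^u$ it does not follow that $D_{v_1}^u$ itself contains a translate of $D_{v_1}$ --- the two-dimensionality could a priori sit entirely in $(K')^u$ --- so the suggested justification (``a face containing a homothet at scale $1$ is the whole body'') is not available as stated. The fix stays inside your framework: $K^u$ lies in the supporting hyperplane $\{\,x : \langle u,x\rangle=h_K(u)\,\}$, and the affine hull of $D_{v_1}+\{q\}$ is $q+v_1^{\perp}$, so containment forces $v_1^{\perp}\subseteq u^{\perp}$, i.e.\ $u\in\R v_1$, after which $\langle v_1,q\rangle=h_{K'}(\pm v_1)$ gives \eqref{eq:cond_qmax}. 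Similarly, when passing from ``the disc lies in $\partial K$'' to ``the disc lies in some $K^u$'', take $u$ to be the outer normal of a supporting hyperplane at a relative interior point of the disc: the unique face whose relative interior contains that point need not be exposed, but the exposed face $K^u$ is in particular a face, hence it absorbs the whole disc, which is all you need.
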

\begin{proof}
    We do the proof for $N=2$; the general case is then given by a straightforward induction. 
    Let $r:S^2 \to \R_{\geq 0}$ be the radial function of the discotope, namely $r(x) := \max \set{\lambda \geq 0}{\lambda x \in K}$. A point $x \in \partial K$ if and only if $r\left(\frac{x}{\| x \|}\right) = \| x \|$. So we claim that for all $p\in D_{v_1}$
    \begin{equation}
        r\left( \frac{p+q}{\| p+q \|}\right) = \| p+q \| 
    \end{equation}
    where $q\in D_{v_2}$ satisfies $\langle q , v_1 \rangle = \pm \max\set{ \langle \tilde{q} , v_1 \rangle}{\tilde{q}\in D_{v_2}}$.
    Assume first that $q$ realizes the maximum. Let $ r\left( \frac{p+q}{\| p+q \|}\right) = \lambda$. Then we have:
    \begin{equation}
        \lambda  \left( \frac{p+q}{\| p+q \|}\right) = p' + q' \in \partial K
    \end{equation}
    for some $p'\in D_{v_1}$ and $q'\in D_{v_2}$. By taking the scalar product with $v_1$ we get:
    \begin{equation}
        \frac{\lambda}{\| p+q \|} \langle q, v_1 \rangle = \langle q' , v_1 \rangle \leq \langle q, v_1 \rangle
    \end{equation}
    therefore $\lambda \leq \| p+q \|$. Since $p+q$ is a point of $K$, $\lambda \geq \| p+q \|$ and the thesis follows.\\
    The other case where $q$ realizes the minimum is analogous.
\end{proof}

Since we assumed that all the $v_i$ are non colinear, for every $i$ there are exactly two $q_i$ that satisfy \eqref{eq:cond_qmax} that we will denote by $q_i^+$ and $q_i^-$ respectively. Lemma~\ref{lem:boundary_discs} then says that in the boundary of the discotope there are exactly $2N$ discs, namely
\begin{equation}
    D_{v_1} + \{q_1^+\},\; D_{v_1} + \{q_1^-\}, \;\ldots,\; D_{v_N} + \{q_N^+\},\; D_{v_N} + \{q_N^-\}.
\end{equation}
The rest of the boundary of the discotope is the open surface $\mathcal{S}:=\partial K \setminus \cup_{i=1}^N (D_{v_i}+\{q_i^{\pm}\})$ made of exposed points. Moreover we show in the next proposition that $\mathcal{S}$ has either one or two connected components. 

\begin{proposition}\label{prop:componentsS}
    Consider the discotope $K = D_{v_1} + \ldots + D_{v_N}$, then $\mathcal{S}$ has two connected components if and only if $v_1, \ldots, v_N$ lie all in the same plane. Otherwise it is connected and no two discs intersect.
\end{proposition}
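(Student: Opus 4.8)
The plan is to analyze the boundary surface $\mathcal{S}$ via the Gauss map, i.e. by understanding which unit normal directions $u\in S^2$ correspond to exposed points rather than to the $2N$ flat discs. Recall that the normal directions realized by the disc $D_{v_i}+\{q_i^\pm\}$ are precisely the two unit vectors $\pm v_i/\|v_i\|$: indeed the face of $K$ in direction $u$ is the Minkowski sum of the faces $(D_{v_j})^u$, and $(D_{v_i})^u$ is the whole disc exactly when $u\parallel v_i$, otherwise it is a single point (so the face in a direction $u\notin\{\pm v_i/\|v_i\|:i\}$ is a point, hence an exposed point of $\mathcal{S}$). Thus $\mathcal{S}$ is the image under the (homeomorphism onto its image) Gauss-type map of the open set $\Omega:=S^2\setminus\{\pm v_1/\|v_1\|,\ldots,\pm v_N/\|v_N\|\}$, and the number of connected components of $\mathcal{S}$ equals the number of connected components of $\Omega$. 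So the problem reduces to a purely combinatorial statement about the sphere: removing $2N$ points (in $N$ antipodal pairs) from $S^2$ disconnects it if and only if $N\le 2$... which is false in general. The point is that the discs are two-dimensional, not points: removing the discs removes from $\partial K$ the regions whose normal cones are the \emph{great circles} $v_i^\perp\cap S^2$, because a boundary point lying on $D_{v_i}+\{q_i^\pm\}$ has normal cone containing $v_i$ but the relative boundary circle of that disc has a fan of normals sweeping an arc. Let me restate correctly.

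The correct reduction: a point $x\in\partial K$ is an exposed point of $\mathcal{S}$ iff its normal cone $N(K,x)$ is one-dimensional (a ray $\R_{\ge0}u$) with $u$ not parallel to any $v_i$; a point on the relative interior of a flat disc $D_{v_i}+\{q_i^\pm\}$ has normal cone the ray spanned by $\pm v_i$; and a point on the relative boundary of such a disc has normal cone a $2$-dimensional cone whose intersection with $S^2$ is an arc through $\pm v_i/\|v_i\|$. Hence, stratifying by normal directions, $\mathcal{S}$ corresponds to the subset of directions $u\in S^2$ for which the face $K^u$ is a single point, i.e. exactly $\Omega=S^2\setminus\{\pm v_i/\|v_i\|\}$ as above, and two points of $\mathcal{S}$ lie in the same connected component iff the corresponding normals lie in the same component of $\Omega$ — this last equivalence needs the Gauss map of $\mathcal{S}$ to be a homeomorphism, which follows since $K$ is a zonoid with the stated non-colinearity (no nontrivial faces of positive dimension other than the $2N$ discs, so the support function is differentiable on $\Omega$ and the gradient map is continuous and injective there). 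Therefore $\#\pi_0(\mathcal{S})=\#\pi_0(\Omega)$.

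Now the combinatorics. Removing $2N$ points from $S^2$ never disconnects it for $N\ge1$; so something is still off, and the resolution is that I have miscounted the faces: the disc $D_{v_i}$ contributes, besides the two "polar" flat discs, a one-parameter family of \emph{edges} — for each direction $u$ in the great circle $v_i^\perp\cap S^2$ the face $(D_{v_i})^u$ is the single boundary point of the disc in direction $u$, but as $u$ varies these assemble; the real statement is that the normal-direction set of $\mathcal{S}$ is $S^2$ minus the $N$ great circles $C_i:=v_i^\perp\cap S^2$ is wrong too since on those circles $K^u$ is still a single point. I will instead argue directly on $\partial K$: parametrize $\mathcal{S}$ as the set of sums $p_1+\cdots+p_N$ with $p_i\in\partial D_{v_i}$ (boundary circle) chosen "compatibly", since an exposed point must be extreme hence a sum of extreme points of each summand, and the extreme points of $D_{v_i}$ are its boundary circle. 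Writing $u$ for the common outer normal, each $p_i=p_i(u)$ is determined by $u$ for $u\notin C_i$, and on $C_i$ there is a whole arc-worth of freedom; removing the discs $D_{v_i}+\{q_i^\pm\}$ removes exactly the fibers over $\pm v_i/\|v_i\|$. So $\mathcal{S}\cong\Omega$ and, since $\Omega$ is connected, we would get "always connected" — contradicting the claimed two-component case, meaning the planar case must produce the two polar discs coinciding in a degenerate way. Concretely if all $v_i$ lie in a plane $H$, then $K\subset H^\perp\oplus H$ has $K\subset\{|z|\le R\}$ with the two "caps" $\{z=\pm R\}\cap\partial K$ being flat $(N$-gon-like) regions equal to $\pm\sum q_i^{\pm}+\sum D_{v_i}$ which are themselves unions of the polar discs, and $\mathcal{S}$ is the \emph{lateral} surface, which is cut into upper and lower halves by the equator $\{z=0\}$; here $\pm v_i/\|v_i\|$ all lie on the equator great circle $e_3^\perp$, and removing those $2N$ equatorial points from $S^2$... still doesn't disconnect — so the splitting of $\mathcal{S}$ into two components comes not from $\Omega$ but from the fact that in the planar case the arcs of $C_i=v_i^\perp\cap S^2$ all pass through $\pm e_3$ and the corresponding boundary fibers degenerate so that the equatorial band of $\partial K$ is actually part of the flat caps, not of $\mathcal{S}$. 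I will make this precise by showing: (i) if the $v_i$ are not coplanar, the great circles $C_i$ have no common point, so over every $u\in\Omega$ the point $p(u)=\sum p_i(u)$ varies continuously and $\mathcal{S}$ is the continuous injective image of the connected $\Omega$, hence connected, and moreover two distinct discs $D_{v_i}+\{q_i^\pm\}$, $D_{v_j}+\{q_j^\pm\}$ are disjoint because a common point would have normal cone containing both $v_i$ and $v_j$, forcing (zonoid structure) a two-dimensional face spanned by both discs, impossible unless $v_i\parallel v_j$; (ii) if the $v_i$ are coplanar, spanning $H$ with unit normal $\nu$, then $\langle\nu,\cdot\rangle$ attains its max on $K$ on the flat region $F^+=\{q\in K:\langle\nu,q\rangle=h_K(\nu)\}=\sum_i(D_{v_i}+\{q_i^{\pm,\nu}\})$ and similarly $F^-$, these two flat faces are exactly the unions of polar discs (here $q_i^\pm$ are the $\pm\nu$-maximizers), so $\mathcal{S}=\partial K\setminus(F^+\cup F^-\cup\text{their relative boundaries})$ splits as $\mathcal{S}^{+}=\{\langle\nu,\cdot\rangle>0 \text{ part}\}\sqcup\mathcal{S}^{-}$, two components.

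The main obstacle I anticipate is part (ii): rigorously identifying, in the coplanar case, the flat caps $F^\pm$ with the union of the $2N$ polar discs $D_{v_i}+\{q_i^\pm\}$ and verifying that $\mathcal{S}$ then has exactly two components (not more) — this requires checking that each open cap is a $\le 2N$-gon-with-rounded-structure whose relative boundary is precisely $\bigcup_i(\text{boundary circle of }D_{v_i}+\{q_i^\pm\})$, and that $\mathcal{S}^+$ is connected, which I would get from the Gauss map argument applied to the upper hemisphere minus the (now equatorial) points $\{\pm v_i/\|v_i\|\}$, an open connected set. The non-coplanar disjointness-of-discs claim, by contrast, is a quick normal-cone argument using that zonoids have no $2$-faces other than translates of sums of pairs of the generating discs when those are in general position; I will spell that out using Corollary~\ref{cor:sectionrepdirection}-style reasoning on faces of Minkowski sums, namely $K^u=\sum_i (D_{v_i})^u$, which handles both the disjointness and the component count cleanly.
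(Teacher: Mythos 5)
Your two-part structure (non-coplanar $\Rightarrow$ connected and discs disjoint; coplanar $\Rightarrow$ exactly two components) is logically adequate, but both halves have genuine gaps, and the coplanar half rests on a wrong picture of the discotope. If all $v_i$ lie in the plane $H$ and $\nu$ is a unit normal of $H$, then $\nu\in v_i^\perp$ for every $i$, so each face $(D_{v_i})^{\nu}$ is the single boundary point $\|v_i\|\nu$ and $K^{\nu}$ is a \emph{point}: there are no flat polar caps, and your identification of $F^{\pm}$ with a union of the $2N$ discs is false. Those discs have normals $\pm v_i/\|v_i\|$, which in the coplanar case lie in $H$; they form a ring around the ``equator'', and the two components of $\mathcal{S}$ come from the fact that consecutive discs in this ring touch, namely at the vertices of the zonogon $K\cap H=\sum_i (D_{v_i}\cap H)$. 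This cross-section computation is the paper's actual argument and is absent from your proposal; note that even inside your own picture the conclusion would not follow, since deleting two flat caps from $\partial K$ leaves a connected annulus, not two components.

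In the non-coplanar half, the statement you keep circling around --- that $\mathcal{S}$ is exactly the image of $\Omega=S^2\setminus\{\pm v_i/\|v_i\|\}$ under $u\mapsto K^u$ --- is precisely what needs proof, and it is false without non-coplanarity: in the coplanar case the touching points of adjacent discs are exposed in directions $u\in\Omega$ strictly between $v_i$ and $v_j$, so they lie in the image but not in $\mathcal{S}$; this is why your reduction kept contradicting the claim. To repair it you must show that when the $v_i$ are not coplanar no point of the closed discs (including their boundary circles) is exposed in a direction outside $\{\pm v_i/\|v_i\|\}$, equivalently that every normal cone of $K$ is a ray; this follows from $N(K,\sum_i p_i)=\bigcap_i N(D_{v_i},p_i)$, where each factor is either the line $\R v_i$ or the half-plane $\R v_i+\R_{\geq 0}p_i$, so a two-dimensional normal cone forces all $v_i$ into one plane --- none of which is in your sketch. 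The same computation is what your disjointness claim needs: the implication ``a common point has normal cone containing $v_i$ and $v_j$, forcing a two-dimensional face spanned by both discs'' is invalid (in the coplanar case adjacent discs do meet, the normal cone at the common point contains both $v_i$ and $v_j$, and no such face exists), so as stated your argument would prove the discs are always disjoint, contradicting the case you must allow. For comparison, the paper avoids normal cones entirely: it gets ``coplanar $\Rightarrow$ two components'' from the zonogon cross-section, and the converse by projecting onto $\mathrm{span}(v_1,v_2)$ and observing that the projection is a Minkowski sum of segments and ellipses, which has a vertex (the image of an intersection point of two discs) only if no ellipse occurs, i.e.\ only if all $v_i$ lie in that plane.
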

\begin{proof}
    Assume first that $v_1, \ldots, v_N \in H$ where without loss of generality $H$ is the hyperplane defined by $\{z=0\}$, then we claim that all the discs in $\partial K$ meet on $H$ in a very precise configuration. Trivially the Minkowski sum $\left( D_{v_1} \cap H \right) + \ldots + \left( D_{v_N} \cap H \right)$ is contained in $K\cap H$. On the other hand let $p\in K\cap H$, then
    \begin{equation}
        p = (\alpha_1,\beta_1,\gamma_1) + \ldots + (\alpha_N,\beta_N,\gamma_N)
    \end{equation}
    where $(\alpha_i,\beta_i,\gamma_i)\in D_{v_i}$ and $\sum \gamma_i = 0$. But because $v_i \in H$, then also $(\alpha_i,\beta_i,0)\in D_{v_i}$ and so we can write $p$ as
    \begin{equation}
        p = (\alpha_1,\beta_1,0) + \ldots + (\alpha_N,\beta_N,0)
    \end{equation}
    hence $p\in \left( D_{v_1} \cap H \right) + \ldots + \left( D_{v_N} \cap H \right)$. This implies that $K\cap H$ is a $2$--dimensional zonotope with $2N$ edges, as in Figure~\ref{fig:touching_discs}; its vertices are exactly the points of intersection of the discs in the boundary. Hence the boundary discs divide $\mathcal{S}$ in exactly $2$ connected components.
    \begin{figure}
        \centering
        \def\svgwidth{0.7\textwidth}
\begingroup%
  \makeatletter%
  \providecommand\color[2][]{%
    \errmessage{(Inkscape) Color is used for the text in Inkscape, but the package 'color.sty' is not loaded}%
    \renewcommand\color[2][]{}%
  }%
  \providecommand\transparent[1]{%
    \errmessage{(Inkscape) Transparency is used (non-zero) for the text in Inkscape, but the package 'transparent.sty' is not loaded}%
    \renewcommand\transparent[1]{}%
  }%
  \providecommand\rotatebox[2]{#2}%
  \newcommand*\fsize{\dimexpr\f@size pt\relax}%
  \newcommand*\lineheight[1]{\fontsize{\fsize}{#1\fsize}\selectfont}%
  \ifx\svgwidth\undefined%
    \setlength{\unitlength}{516.26274049bp}%
    \ifx\svgscale\undefined%
      \relax%
    \else%
      \setlength{\unitlength}{\unitlength * \real{\svgscale}}%
    \fi%
  \else%
    \setlength{\unitlength}{\svgwidth}%
  \fi%
  \global\let\svgwidth\undefined%
  \global\let\svgscale\undefined%
  \makeatother%
  \begin{picture}(1,0.61285911)%
    \lineheight{1}%
    \setlength\tabcolsep{0pt}%
    \put(0,0){\includegraphics[width=\unitlength,page=1]{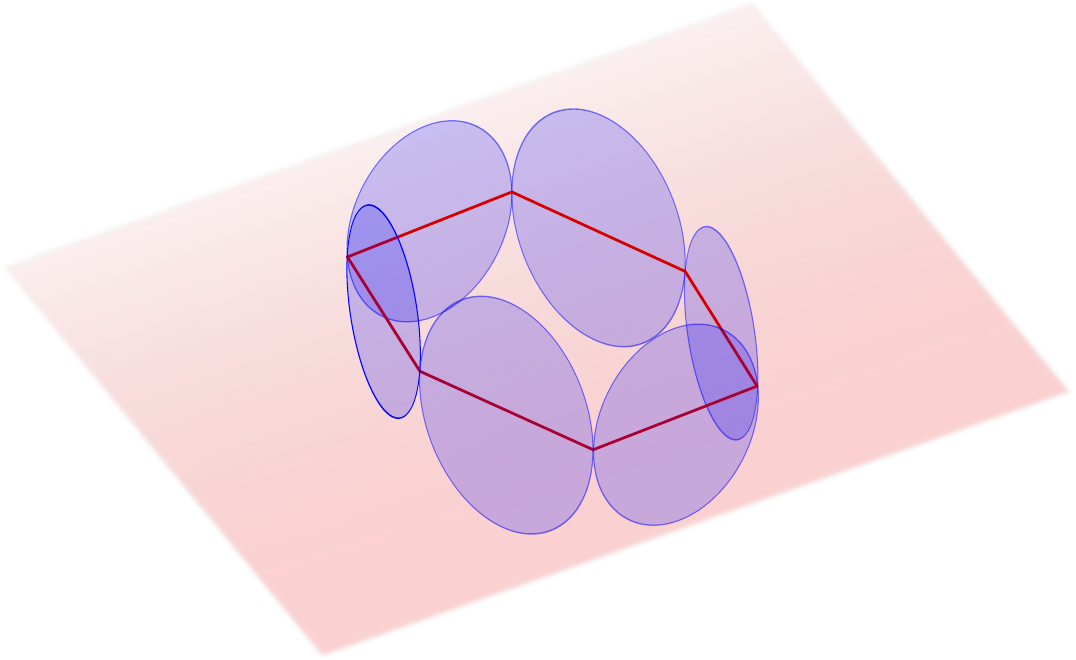}}%
    \put(0.29739618,0.04997373){\color[rgb]{0,0,0}\makebox(0,0)[lt]{\lineheight{1.25}\smash{\begin{tabular}[t]{l}$H$\end{tabular}}}}%
  \end{picture}%
\endgroup%

        \caption{The $6$ blue discs are part of the boundary of the discotope $K = D_{v_1} + D_{v_2} + D_{v_3}$, where $v_1,v_2,v_3$ belong to the red-shaded hyperplane H. It separates the two connected components of $\mathcal{S}$. In particular the intersection $\partial K\cap H$ is the red hexagon. }
        \label{fig:touching_discs}
    \end{figure}
    
    For the converse notice that if there are two connected components, then at least two boundary discs must intersect. Without loss of generality assume that there is an intersection point $p$ between a copy of $D_{v_1}$ and a copy of $D_{v_2}$ and consider the plane $H = \text{span}(v_1,v_2)$. Let $\pi (K)$ be the projection of the discotope on $H$; clearly $\pi(p)\in \partial \pi(K)$ is a vertex. Then for $u\in S^1\hookrightarrow H$
    \begin{align}
        h_{\pi(K)}(u) &= h_{D_{v_1}}(u) + \ldots + h_{D_{v_N}}(u) \\
                      &\overset{\eqref{eq:DiscRandomVar}}{=} \sum_{i=1}^N \|v_i\|\sqrt{\langle u,a_i\rangle^2+\langle u,b_i\rangle^2} \\
                      &= \sum_{i=1}^N \|v_i\|\sqrt{\langle u,\pi(a_i)\rangle^2+\langle u,\pi(b_i)\rangle^2}
    \end{align}
    where $\{\frac{v_i}{\|v_i\|},a_i,b_i\}$ is an orthonormal basis for every $i$. There are two possibilities now: either $\pi(a_i)$ and $\pi(b_i)$ are linearly independent, or they are linearly dependent and possibly zero. The latter case corresponds to discs such that $v_i\in H$, and the summand above becomes linear. So, up to relabeling, we can rewrite the support function splitting these cases:
    \begin{equation}
        h_{\pi(K)}(u) = \sum_{i=1}^k |\langle u,\alpha_i \rangle | +  \sum_{j=k+1}^N \|v_j\|\sqrt{\langle u,\pi(a_j)\rangle^2+\langle u,\pi(b_j)\rangle^2} 
    \end{equation}
    for some $\alpha_i \in \R$ and $2\leq k \leq N$. Therefore $\pi(K)$ is the Minkowski sum of $k$ line segments and $N-k$ ellipses. The boundary contains a vertex if and only if there are no ellipses in the sum, hence $k=N$ i.e. $v_i \in H$ for every $i$.
\end{proof}

\begin{remark}
    The previous result can be interpreted with the notion of \emph{patches}. These geometric objects have been first introduced in \cite{sturmfels_patches} and allow to subdivide the boundary of a convex body. Accordingly to their definition, in the discotope we find $2N$ $2$--patches, corresponding to the boundary discs, and either one ore two $0$--patches when $\mathcal{S}$ has one or two connected components respectively.
    Recently Plaumann, Sinn and Wesner \cite{sinn_patches} refined the definition of patches for a semialgebraic convex body. In this setting it is more subtle to count the number of patches of our discotopes, because this requires the knowledge of the number of irreducible components of $\mathcal{S}$.
\end{remark}

\subsection{A case study: the dice}

\begin{definition}
    Let $e_1,e_2,e_3$ be the standard basis of $\R^3$ and let $D_i:=D_{e_i}$. We define the \emph{dice} to be the discotope $\mathscr{D}:=D_1+D_2+D_3$. See Figure~\ref{fig:Dice}.
\end{definition}
The boundary of the dice consists of $6$ two--dimensional discs of radius $1$, lying in the center of the facets of the cube $[-2,2]^3$, and a connected surface. The latter is the zero locus of the polynomial of degree 24:
\begin{equation}
    \varphi(x,y,z) = x^{24}+4 x^{22} y^2+2 x^{20} y^4+ \ldots +728 z^4-160 x^2-160 y^2-160 z^2+16
\end{equation}
which is too long to fit in a page (it is made of $91+78+66+55+45+36+28+21+15+10+6+3+1=455$ monomials, here distinguished by their degree).

Consider the projection $\pi:=\langle e_1, \cdot\rangle :\R\oplus \R^2\to \R$. Even in this simple example the fibers of the dice under this projection can be tricky to describe. However using the formula for zonoids one can compute explicitly the fiber body (see Figure~\ref{fig:FiberOfDice}).
\begin{figure}[ht]
    \begin{subfigure}{0.49\textwidth}
    \centering
    \includegraphics[width=0.6\textwidth]{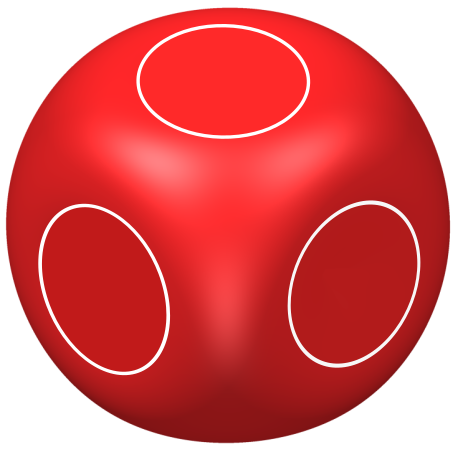}
    \caption{}
    \label{fig:Dice}
    \end{subfigure}
    \begin{subfigure}{0.49\textwidth}
    \centering
    \includegraphics[width=0.5\textwidth]{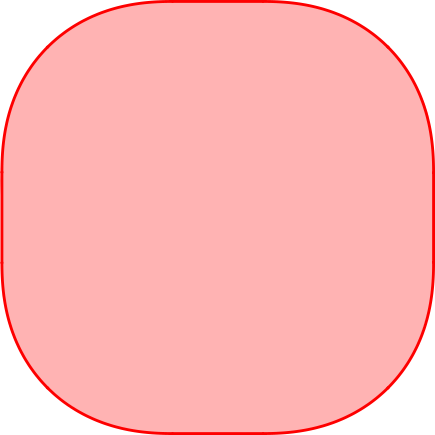}
    \caption{}
    \label{fig:FiberOfDice}
    \end{subfigure}
    \caption{Left: the dice. Right: its fiber body.}
\end{figure}

\begin{proposition}
    With respect to this projection $\pi$, the fiber body of $\mathscr{D}$ is 
    \begin{equation}
        \Sigma_\pi(\mathscr{D})= D_1+\frac{\pi}{4} \left(\seg{e_2}+\seg{e_3}\right)+ \frac{1}{2}\Lambda
    \end{equation}
    where $\Lambda$ is the convex body whose support function is given by
    \begin{equation}
        h_\Lambda(u_2,u_3)=\frac{1}{2}\int_0^{\pi}\sqrt{\cos(\theta)^2 \left(u_2\right)^2+\sin(\theta)^2 \left(u_3\right)^2} \ \dd \theta
    \end{equation}
    and where we recall the notation~\eqref{eq:defseg} for segments.
    \begin{proof}
     First of all let us note that by expanding the mixed fiber body $\MSp(\mathscr{D},\mathscr{D})$ we have 
    \begin{equation}\label{eq:sumfibdice}
        \Sigma_\pi(\mathscr{D})=\Sigma_\pi(D_1)+\Sigma_\pi(D_2)+\Sigma_\pi(D_3)+2\left( \MSp(D_1,D_2)+\MSp(D_1,D_3)+\MSp(D_2,D_3)\right).
    \end{equation}
    Now let $\sigma_1(\theta):=(0,\cos(\theta),\sin(\theta))$, $\sigma_2(\theta):=(\cos(\theta),0,\sin(\theta))$ and $\sigma_3(\theta):=(\cos(\theta),\sin(\theta),0)$ in such a way that $h_{D_i}(u)=\frac{\pi}{2}\EE|\langle u, \sigma_i(\theta)\rangle|$.
    
    We then want to use Theorem~\ref{thm:Fiberofzonoids} and Proposition~\ref{prop:mixedfiberofzon} to compute all the summands of the expansion of $\Sigma_\pi(\mathscr{D})$. Using~\eqref{eq:DvasKofs} we have that $\MSp(D_i,D_j)=\pi^2 K_0 (F_\pi(\sigma_i(\theta),\sigma_j(\phi))$ with $\theta,\phi\in [0,2\pi]$ uniform and independent. In our case, $F_\pi(x,y)=(x_1y_2-y_1x_2, x_1y_3-y_1x_3)/2$. We obtain
    \begin{align}
        F_\pi(\sigma_1(\theta),\sigma_1(\phi)) =0,     \;\: &F_\pi(\sigma_2(\theta),\sigma_2(\phi))    =\frac{1}{2}(0,\sin(\phi-\theta)),         \\
        F_\pi(\sigma_3(\theta),\sigma_3(\phi))=\frac{1}{2}(\sin(\phi-\theta),0), \;\:  &F_\pi(\sigma_1(\theta),\sigma_2(\phi))    =\frac{-\cos(\phi)}{2}(\cos(\theta),\sin(\theta)), \\
        F_\pi(\sigma_1(\theta),\sigma_3(\phi))    =\frac{-\cos(\phi)}{2}(\cos(\theta),\sin(\theta)),    \;\:     &F_\pi(\sigma_2(\theta),\sigma_3(\phi))    =\frac{1}{2}(\cos(\theta)\sin(\phi),\sin(\theta)\cos(\phi)).   
    \end{align}
  Computing the support function $h_{\pi^2K_0 (F_\pi(\sigma_i(\theta),\sigma_j(\phi)))}=(\pi^2/2)\EE|\langle u, F_\pi(\sigma_i(\theta),\sigma_j(\phi))\rangle|$ and using that $\EE|\cos(\phi)|=2/\pi$, we get 
  \begin{align}
        &\Sigma_\pi(D_1)=0; \quad \Sigma_\pi (D_2)=\frac{\pi}{4}\ \seg{e_2}; \quad \Sigma_\pi (D_3)=\frac{\pi}{4}\ \seg{e_3} ;   \\
        &\MSp(D_1,D_2)=\MSp(D_1,D_3)=\frac{1}{4} D_1
    \end{align}
     It only remains to compute $\MSp(D_2,D_3)$. We have 
    \begin{equation}
        h_{\MSp(D_2,D_3)}(u)=\frac{1}{2}\left(\frac{\pi}{2}\right)^2\EE|\langle u, F_\pi(\sigma_2(\theta),\sigma_3(\phi))\rangle|=\frac{\pi^2}{16}\EE|u_2 \cos(\theta)\sin(\phi)+u_3\sin(\theta)\cos(\phi)|.
    \end{equation}
    We use then the independence of $\theta$ and $\phi$ and~\eqref{eq:DiscRandomVar} to find 
    \begin{equation}
        h_{\MSp(D_2,D_3)}(u)=\frac{\pi}{8}\EE\sqrt{\cos(\theta)^2 \left(u_2\right)^2+\sin(\theta)^2 \left(u_3\right)^2}=\frac{1}{4}h_\Lambda(u)
    \end{equation}
   Puting back together everything we obtain the result.
    \end{proof}
\end{proposition}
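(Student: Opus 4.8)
The plan is to reduce the computation of $\Sp{\mathscr{D}}$ to a handful of Vitale zonoids by exploiting the multilinearity of the mixed fiber body. Since here $n=1$, the map $\MSp$ has $n+1=2$ slots, so expanding $\MSp(\mathscr{D},\mathscr{D})$ along $\mathscr{D}=D_1+D_2+D_3$ gives
\begin{equation}
    \Sp{\mathscr{D}}=\sum_{i=1}^{3}\Sp{D_i}+2\!\!\sum_{1\le i<j\le 3}\!\!\MSp(D_i,D_j),
\end{equation}
which is precisely~\eqref{eq:sumfibdice}. Hence it is enough to identify each of these six convex bodies, and the tools for that are Theorem~\ref{thm:Fiberofzonoids} and Proposition~\ref{prop:mixedfiberofzon}, fed with the random representations of the discs supplied by Lemma~\ref{lem:discotopeeq}.

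Concretely, I would first record that $D_i=\pi\,K_0(\sigma_i(\theta))$ with $\sigma_1(\theta)=(0,\cos\theta,\sin\theta)$, $\sigma_2(\theta)=(\cos\theta,0,\sin\theta)$, $\sigma_3(\theta)=(\cos\theta,\sin\theta,0)$ and $\theta$ uniform; by multilinearity of $\MSp$ and $K_0(\lambda X)=\lambda K_0(X)$ this yields $\Sp{D_i}=\pi^2K_0(F_\pi(\sigma_i(\theta),\sigma_i(\phi)))$ and $\MSp(D_i,D_j)=\pi^2K_0(F_\pi(\sigma_i(\theta),\sigma_j(\phi)))$ with $\theta,\phi$ independent and uniform. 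Next I would substitute the explicit skew map for this projection, $F_\pi(x,y)=\tfrac12(x_1y_2-y_1x_2,\ x_1y_3-y_1x_3)$, and evaluate the six vectors $F_\pi(\sigma_i(\theta),\sigma_j(\phi))$; each is a one-line trigonometric simplification because a zero coordinate appears in every $\sigma_i$. The corresponding support functions are then $\tfrac{\pi^2}{2}\EE|\langle u,F_\pi(\sigma_i(\theta),\sigma_j(\phi))\rangle|$.

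Most of these terms degenerate. For the pairs involving $D_1$ one gets $F_\pi=-\tfrac{\cos\phi}{2}(\cos\theta,\sin\theta)$; factoring the expectation by independence and using $\EE|\cos\phi|=2/\pi$, this collapses via~\eqref{eq:DiscRandomVar} to a multiple of $h_{D_1}$, giving $\MSp(D_1,D_2)=\MSp(D_1,D_3)=\tfrac14 D_1$. Since $F_\pi(\sigma_1,\sigma_1)=0$ one has $\Sp{D_1}=0$, and since $\sin(\phi-\theta)$ has the same law as $\sin\psi$ one gets $\Sp{D_2}=\tfrac{\pi}{4}\seg{e_2}$ and $\Sp{D_3}=\tfrac{\pi}{4}\seg{e_3}$. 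Collecting, $\Sp{\mathscr{D}}=D_1+\tfrac{\pi}{4}(\seg{e_2}+\seg{e_3})+2\MSp(D_2,D_3)$.

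The one piece that does not degenerate — and the step I expect to be the real work — is $\MSp(D_2,D_3)$. Here $F_\pi(\sigma_2(\theta),\sigma_3(\phi))=\tfrac12(\cos\theta\sin\phi,\ \sin\theta\cos\phi)$, so
\begin{equation}
    h_{\MSp(D_2,D_3)}(u)=\frac{\pi^2}{16}\,\EE\bigl|u_2\cos\theta\sin\phi+u_3\sin\theta\cos\phi\bigr|.
\end{equation}
I would integrate out $\phi$ first with $\theta$ fixed: the inner average is $\EE_\phi|\langle(u_3\sin\theta,u_2\cos\theta),(\cos\phi,\sin\phi)\rangle|$, which by~\eqref{eq:DiscRandomVar} equals $\tfrac{2}{\pi}\sqrt{\cos^2\theta\,u_2^2+\sin^2\theta\,u_3^2}$; averaging over $\theta$ then identifies $h_{\MSp(D_2,D_3)}=\tfrac14 h_\Lambda$ with $h_\Lambda$ exactly as in the statement, and adding $2\MSp(D_2,D_3)=\tfrac12\Lambda$ to the previous contributions closes the proof. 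The only delicate point is tracking the $\pi$ and $1/2$ normalizations consistently through Theorem~\ref{thm:Fiberofzonoids}, \eqref{eq:DvasKofs} and \eqref{eq:DiscRandomVar}, which is where a slip is most likely.
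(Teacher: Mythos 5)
Your strategy coincides with the paper's: expand $\Sp{\mathscr{D}}$ by multilinearity of $\MSp$, represent each disc as a Vitale zonoid via Lemma~\ref{lem:discotopeeq}, evaluate the six terms with Theorem~\ref{thm:Fiberofzonoids} and Proposition~\ref{prop:mixedfiberofzon}, and treat $\MSp(D_2,D_3)$ by integrating out $\phi$ first. So the route is not the issue; the issue is exactly the point you flag at the end, namely the normalization, and as written your intermediate values do not follow from the formulas you state.

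From $D_i=\pi K_0(\sigma_i)$ you correctly deduce $\MSp(D_i,D_j)=\pi^2K_0\bigl(F_\pi(\sigma_i(\theta),\sigma_j(\phi))\bigr)$, hence support functions $\tfrac{\pi^2}{2}\EE\bigl|\langle u,F_\pi(\sigma_i,\sigma_j)\rangle\bigr|$. Evaluating with \emph{this} prefactor gives
\begin{equation}
 h_{\Sp{D_2}}(u)=\frac{\pi^2}{4}\,\EE|\sin(\phi-\theta)|\,|u_3|=\frac{\pi}{2}|u_3|,\qquad
 h_{\MSp(D_1,D_2)}(u)=\frac{\pi^2}{4}\,\EE|\cos\phi|\,\EE|u_2\cos\theta+u_3\sin\theta|=\|u\|,
\end{equation}
and similarly $h_{\MSp(D_2,D_3)}=h_\Lambda$; that is, $\Sp{D_2}=\pi\,\seg{e_3}$, $\MSp(D_1,D_2)=\MSp(D_1,D_3)=D_1$ and $\MSp(D_2,D_3)=\Lambda$, each \emph{four times} the value you assert (note also that you swap $e_2$ and $e_3$: the fibers of $D_2$ are segments in the $e_3$-direction, so $\Sp{D_2}$ must be a multiple of $\seg{e_3}$, not $\seg{e_2}$; this is harmless in the symmetric final sum but wrong as an intermediate claim). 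The quarter-values $\tfrac{\pi}{4}\seg{e_2}$, $\tfrac14 D_1$, $\tfrac14\Lambda$ would follow only from the prefactor $\tfrac12\left(\tfrac{\pi}{2}\right)^2$, i.e. from reading $D_i$ as $\tfrac{\pi}{2}K_0(\sigma_i)$, a switch you never justify. This is not cosmetic: carried out consistently with your stated prefactor, your argument yields $4\bigl(D_1+\tfrac{\pi}{4}(\seg{e_2}+\seg{e_3})+\tfrac12\Lambda\bigr)$ instead of the claimed body. A decisive elementary check is $\Sp{D_2}$ itself: its fibers over $x\in[-1,1]$ are $\{0\}\times[-\sqrt{1-x^2},\sqrt{1-x^2}]$, so Proposition~\ref{prop:supportaverage} gives $h_{\Sp{D_2}}(u)=\int_{-1}^{1}\sqrt{1-x^2}\,|u_3|\,\dd x=\tfrac{\pi}{2}|u_3|$. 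You must carry out such a check and reconcile the constant before the computation can be considered complete; until then there is a genuine gap between the prefactor you derive and the values you plug into the final sum.
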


    \begin{remark}
        It is worth noticing that the convex body $\Lambda$ also appears, up to a multiple, in~\cite[Section~$5.1$]{PSC} where it is called $D(2)$, with no apparent link to fiber bodies.
        In the case where $u_2\neq 0$ we have 
        \begin{equation}
            h_\Lambda(u)= |u_2| E\left(\sqrt{1-\left(\frac{u_3}{u_2}\right)^2}\right)
        \end{equation}
        where $E(s)=\int_0^{\pi/2}\sqrt{1-s^2\sin(\theta)^2}\dd \theta$ is the complete elliptic integral of the second kind. This function is not semialgebraic thus the example of the dice shows that the fiber body of a semialgebraic convex body is not necessarily semialgebraic. However $E$ is holonomic. This suggests that the curved assumption in Corollary~\ref{cor:holonomiccurved} may not be needed.
    \end{remark}

\bibliographystyle{alpha}
\bibliography{biblio}

~\\

\noindent{\bf Authors' addresses:} 
\smallskip

\small 

\noindent {\bf L\'eo Mathis,} \\
Goethe-Universit\"at Frankfurt, \\
Robert-Mayer-Strasse 10, D-60325 Frankfurt am Main, Germany \\
\hfill {\tt mathis@mathematik.uni-frankfurt}
\smallskip

\noindent {\bf Chiara Meroni, }\\
Max Planck Institute for Mathematics in the Sciences, \\
Inselstrasse 22, 04103 Leipzig, Germany \\
\hfill {\tt chiara.meroni@mis.mpg.de}

\end{document}